\def\theorem@space@setup{
  \theorem@preskip=0.4cm 
  \theorem@postskip=\theorem@preskip 
}
\newtheorem{theorem}{Theorem}[section]
\newtheorem{definition}[theorem]{Definition}
\newtheorem{assumption}[theorem]{Assumption}
\newtheorem{lemma}[theorem]{Lemma}
\newtheorem{remark}[theorem]{Remark}
\newtheorem{proposition}[theorem]{Proposition}
\newtheorem{corollary}[theorem]{Corollary}
\newcommand\Z{\mathbb{Z}}
\newcommand\N{\mathbb{N}}
\newcommand\R{\mathbb{R}}
\newcommand\PP{\mathbb{P}}
\newcommand{\En}{\mathcal{E}_n}
\newcommand{\Sn}{\mathcal{S}_n}
\newcommand{\toL}{\,{\buildrel {d} \over \longrightarrow}\,}
\newcommand{\lp}[1]{\left(#1\right)}
\newcommand{\lb}[1]{\left[#1\right]}
\newcommand{\la}[1]{\left|#1\right|}
\newcommand{\lcb}[1]{\left\{#1\right\}}
\newcommand{\norm}[1]{\left|\left|#1\right|\right| }
\long\def\metanote#1#2{{\color{#1}\
\ifmmode\hbox\fi{\sffamily\mdseries\upshape [#2]}\ }}
\numberwithin{equation}{section}
\newcounter{keepeqno}
\title{The quenched structured coalescent for diploid population models on finite graphs with large migrations and uneven offspring distributions}
 \author{ 
 Maximillian Newman\footnote{Department of Genetic Medicine, University of Chicago, Chicago, IL, USA. Email: \texttt{mnewman98@uchicago.edu}}
} 
\date{\today}
\begin{document}

\captionsetup{width=0.85\textwidth}

\maketitle

\abstract{ 
    In this work we describe a new model for the evolution of a diploid structured population backwards in time that allows for large migrations and uneven offspring distributions, and that generalizes both the mean-field model of Birkner et al. [\textit{Electron. J. Probab.} 23: 1-44 (2018)] and the haploid structured model of M\"{o}hle [\textit{Theor. Popul. Biol.} 2024 Apr:156:103-116]. We show convergence, with mild conditions on the joint distribution of offspring frequencies and migrations, of gene genealogies conditional on the pedigree to a time-inhomogeneous coalescent process driven by a Poisson point process $\Psi$ that records the timing and scale of large migrations and uneven offspring distributions. This quenched scaling limit demonstrates a significant difference in the predictions of the classical annealed theory of structured coalescent processes. In particular, the annealed and quenched scaling limits coincide if and only if these large migrations and uneven offspring distributions are absent. The proof proceeds by the method of moments and utilizes coupling techniques from the theory of random walks in random environments. Several examples are given and their quenched scaling limits established.
}
\section{Introduction}
    
    Consider a sample of $n$ copies of a gene from $n$ distinct individuals in a population. These samples, the result of generations of genetic inheritance backwards in time, trace their roots back to a common ancestor. The structure of how these genes trace their roots backwards in time is called the gene genealogy and is the central object of interest in coalescent theory. The seminal work of Kingman \cite{kingman78, kingman1982} established, marginally, at each place in the genome of a well-mixed population of constant, large size undergoing neutral evolution, that gene genealogies take the structure of a binary tree, where the binary mergers occur uniformly at random among each possible pair of extant lineages and each with the same rate. This tree structure, now known as the Kingman coalescent, is the fundamental universality class of coalescent theory, arising in one way or another in a slew of population models. Following Kingman's work, several now-classical works in coalescent theory \cite{pitman1999, sagitov1999general, schweinsberg} have described all possible scaling limits of genealogies in well-mixed population models as $\Xi$-coalescent processes, a class of trees that admit simultaneous multiple mergers.

    While extremely powerful as a tool in population genetics \cite{wakeley2016coalescent, Rosenberg2002}, and with rich mathematical ties to statistical physics, interacting particle systems, diffusion processes, and beyond \cite{bolthausenschnitsman, berestyckirecentprogress, bertoinlegall04}, the scaling limits of the classical theory implicitly describe the \textit{average} genealogy at a locus. In reality, every locus in the genome shares the same history of reproductive relationships, modeled by a random graph called the pedigree. Genealogies at every place in the genome are correlated through the random structure of this pedigree. The correct starting point for the coalescent, if we wish to understand the \textit{distribution} of gene genealogies across the genome, is to understand the law of the coalescent conditional on the pedigree. We refer to this conditional law as the law of the \emph{quenched coalescent}. Beginning with the non-rigorous works \cite{wakeleyetal2012, WakeleyEtAl2016, WiltonEtAl2017}, pedigree effects were observed in the recent past and in structured populations. This includes significant differences in the site-frequency spectrum and distribution of pairwise coalescence times (see \cite[Figure 3]{WiltonEtAl2017}, for example.) Recently, mathematically rigorous scaling limits have begun to emerge from quenched coalescent theory that are quite distinct from those of the classical theory.

    \cite{tyukin15} established the first result for quenched coalescents, showing that in a well-mixed population in the absence of uneven offspring distributions, gene genealogies of unlinked loci really are independent in the large population limit. However, the works \cite{dfbw24, abfw25} then showed the effects of uneven offspring distributions in the absence of selfing on gene genealogies. The pedigree leaves a persistent imprint on the dynamics of these gene genealogies through prodigious progenitors, individuals whose genetic contributions are shared across the genome. The limiting quenched genealogy becomes time inhomogeneous, evolving with an ordinary Kingman-like background punctuated by rare generations in which prodigious individuals contribute a non-vanishing proportion of the genetic material of the population even in the infinite population limit. Under the same assumptions of \cite{birkner2018} that yield an annealed multiple merger coalescent, gene genealogies conditional on the pedigree converge to a coalescent process driven by a Poisson point process recording the timing and structure of exceptional reproductive generations of the pedigree. Between exceptional generations, lineages merge only in binary mergers at a constant rate. At exceptional generations, many lineages may merge simultaneously, with the merger pattern determined by the ordered offspring frequencies. This identifies the precise sense in which widely separated loci are coupled; they share the same record of macrosopic pedigree events, these prodigious progenitors. This Poisson driven viewpoint is closely related to the limiting object in the present paper.

    While well-mixed populations are a natural starting point for coalescent analysis, many real population exhibit population structure. They are subdivided into demes connected by migration. In such settings, ancestral lineages carry locations and move between demes backward in time according to some migration mechanism. Coalescence can occur only when lineages occupy the same deme, with rates governed by deme sizes. This yields the structured coalescent\cite{herbots97, notohara90}, which has become the baseline approximation for inference in spatially distributed data and metapopulations\cite{muller17, guo22}. Recent literature has characterized multi-type $\Lambda$- and $\Xi$-coalescents as the most general universality classes of structured populations. These are models in which genealogies are subject to migrations that move them between types and where, within each type, the genealogies look like $\Xi$-coalescents. \cite{eldon09} proved convergence of a Moran model with large offspring numbers to a multi-type $\Lambda$-coalescent. \cite{johnston23} classifies exchangeable, consistent, and asynchronous multi-type coalescent processes as multi-type $\Lambda$-coalescents where mergers and migrations do not occur simultaneously. \cite{mohle24} analyzes multi-type Cannings models with conservative migration, demonstrating convergence to more general multi-type exchangeable coalescents that can admit simultaneous multiple mergers. In \cite{daipra25}, they study a structured population undergoing bottlenecks, proving that the limiting annealed genealogies are multi-type $\Xi$-coalescents characterized by simultaneous multiple mergers and migrations. 
    
    Much like the well-mixed case, the structured coalescent of \cite{notohara90, herbots97} and multi-type $\Xi$ coalescents \cite{eldon09, mohle24, daipra25} describe an averaged single locus genealogy. They do not condition on the single realized history of the pedigree. In structured populations this distinction becomes especially natural. Large migration waves can occur on the same time scale as coalescence and can affect many lineages at once. Such events are recorded in the pedigree and are shared across loci. They therefore create an additional source of time inhomogeneity and cross locus dependence that is invisible to purely annealed descriptions. \cite{WiltonEtAl2017} provided simulation results that demonstrated the importance of the pedigree in structured populations, where deviations from standard coalescent predictions can persist longer than in well-mixed populations due to the specific history of migration events captured by the pedigree.

    \subsection{The contributions of this paper}

        This work develops a quenched coalescent theory for a diploid structured population modeled on an arbitrary finite directed graph $G = \lp{V,E}$. Each vertex $v$ in $V$ represents a deme of the population and it is kept a constant size of $\lfloor s(v) N \rfloor$ backwards in time, where $N$ is a population size parameter we will send to infinity. Each generation draws a random proportion $m_{(v,w)}$ of the population at deme $v$ to travel to deme $w$. The total population at each deme $v$ then coalesces according to some exchangeable, within each deme, offspring distribution $\mathcal{V}^v$ in a manner similar to \cite{birkner2018, abfw25}. This model differs from those of \cite{mohle24, daipra25} by allowing migrations to be arbitrarily large and non-conservative in the sense of \cite{notohara16}, encoding diploidy, and allowing generic offspring frequencies, including not-necessarily independent offspring frequencies across demes, simultaneously. This model includes the models of \cite{birkner2018, abfw25} as special subcases, and generalizes the model of \cite{mohle24} to diploid populations and random migration sizes at each time-step. We denote the ordered offspring frequencies in each deme $v$ by $\widetilde{\mathcal{V}}^v$, and the collection of these over all demes by $\widetilde{\mathcal{V}} = \lp{\widetilde{\mathcal{V}}^v}_{v \in V}$.

        We show that as long as large migrations and uneven offspring distributions are suitably rare, described by the limiting joint distribution of $\lp{\widetilde{\mathcal{V}}, m}$ satisfying some local integrability condition, the law of the coalescent conditional on the pedigree converges in distribution to a random measure governing a time inhomogeneous structured coalescent. More precisely, there is some ``neutral" migration rate $\mu_e$ along every edge $e$ of $E$, and some ``neutral" coalescence rate $\kappa_v$ within each deme $v$ so that any block of the coalescent in deme $v$ moves independently of all the other blocks to deme $w$ at rate $\mu_{v,w}$, and any pair of blocks in deme $v$ coalesce independently of any of the other pairs of blocks at rate $\kappa_v$. At exceptional generations, governed by a Poisson point process $\Psi$, the coalescent jumps according to a paintbox construction, generalizing the Kingman $x$-paintbox described, for example, in \cite{berestycki04}. The limiting conditional gene genealogy is described by the law of a coalescent following the neutral migration and coalescence rates, jumping according to a paintbox construction at atoms of $\Psi$, all conditional on $\Psi$. This model is a natural extension of the limiting model of \cite{abfw25}, where there are no migration rates and where our $\kappa_v$ for the single deme in their model corresponds to their $c_{\text{pair}}$. 
    
        The proof proceeds by the method of moments. By a theorem of \cite{nfw25_2}, it is enough to prove that, as random variables, the finite-dimensional distributions of the coalescent \emph{conditional on the pedigree} converge in distribution to those of the limiting quenched process. Since these conditional finite-dimensional distributions are uniformly bounded random variables, we can show convergence by showing all moments of these conditional finite-dimensional distributions converge. It will suffice to show that for $l$ realizations of the coalescent conditionally independent with respect to the pedigree that the joint law converges to the same joint law as for $l$ realizations of the time-inhomogeneous coalescent conditionally independent with respect to $\Psi$. The biological interpretation of the method is then as follows: the distribution of gene genealogies across unlinked loci converges in distribution to a given (random) law if and only if the \textit{annealed} distribution of gene genealogies across $l$ unlinked loci converges to that of the annealed distribution of that given law across $l$ unlinked loci for any finite number of $l$ unlinked loci.
    
        We control these joint laws by constructing a coupling of the $l$ conditionally independent copies on the same pedigree, in the same spirit as \cite{abfw25}, who built on earlier work of \cite{bs02, birkner13}. The coupling makes transparent the two mechanisms that drive the limit. Firstly, genuinely \emph{large} pedigree events that change the coalescent state, such as macroscopic migration waves or unusually large families, are rare on the coalescent time scale. However, they affect all copies in nearly the same way. In particular, the different copies jump to corresponding new states with asymptotically matching probabilities. Second, during the overwhelming majority of \emph{small} generations, the copies experience the ``neutral" migration patterns described above, and their jump decisions become asymptotically independent across copies. This approximate independence follows from straightforward moment bounds. These two observations form the basis for a ``separation of scales" argument in which we bridge these two regimes to demonstrate the desired convergence of the joint law. We note here that there is an implicit separation of scales argument in the $\varepsilon$-naive-coalescent of \cite{abfw25}, though the total form of the argument is distinct. A strength of this method is that it only relies on some crude moment bounds and law of large numbers type arguments, which makes the analysis particularly simple and general.

    \subsection{Organization of the paper}

        The paper is organized into sections as follows. We begin in Section~\ref{S: model_and_coalescent} by introducing a diploid bi-parental population model for a finite structured population with size scaling with a parameter $N$. This population model gives rise to a random graph $\mathcal{G}_N$, called the pedigree, that tracks the total history of reproductive relationships in the population. We proceed then to describe how the ancestral process, which tracks how a sample of size $n$ traces its genetic inheritance in the population backwards in time, is determined by the structure of a family of coalescing random walks on $\mathcal{G}_N \times \{0,1\}$. The law of this ancestral process, conditional on the pedigree, is the central object of focus in this work. In Section~\ref{S: annealed_result} we describe the annealed limit of the ancestral process. It is here that we outline the essential assumptions necessary for our main theorem to hold. This annealed result follows as a corollary of the main result and its proof is contained in Section~\ref{App: annealed_convergence}.

        In Section~\ref{S: main_result} we describe the limit of the law of the ancestral process conditional on the pedigree. There is a Poisson point process $\Psi$ that governs the joint distribution of offspring frequencies and migration proportions between generations and a coalescent process $\chi^n$ driven by $\Psi$ as follows: Between atoms of $\Psi$ the ancestral process evolves as in the classical structured coalescent. That is, lineages move independently along an edge at a given rate $\mu_e$ and pairs of lineages in the same deme $v$ coalesce at a given rate $\kappa_v$. At the atoms of $\Psi$, the transition matrix for the ancestral process is described by means of a paintbox construction, akin to that of the Kingman paintbox \cite{berestycki04}. This paintbox corresponds, in the finite population model, to the existence of migrations on the order of the size of the total population or to prodigious progenitors who contribute a positive proportion of the total populations genetic information. The limiting \textit{quenched} law is then the law of $\chi^n$ conditional on $\Psi$. Because $\Psi$ may in general have infinitely many atoms, we rigorously construct a pathwise description of the time-inhomogeneous coalescent built on $\Psi$ in Section~\ref{SS: driven_coupling}.

        In Section~\ref{S: proof_main} we outline the proof of the main result. This consists of a series of lemmas making the method of moments and separation of scales argument rigorous. The proofs of these lemmas are largely postponed to Section~\ref{S: coupling}. Therein we rigorously describe how to couple $l$ realizations of the discrete-time ancestral process on the same pedigree, why the small-scale behavior of our model looks like the classical structured coalescent picture, and why the large-scale behavior of our model affects all of the $l$ realizations in, asymptotically, the same manner. It also contains a separation of timescales argument that allows us to simplify the state space of our coupling.

        In Section~\ref{S: examples_and_apps} we demonstrate convergence of the conditional gene genealogies for several examples discrete-time population models of interest. The examples include a two deme Wright-Fisher model with neutral migration analogous to the model of \cite{WiltonEtAl2017}, a two deme model analogous to that of \cite{dfbw24}, a model converging to a beta coalescent with beta distributed migrations, and what we call a discrete approximation of a spatial $\Xi$ Fleming-Viot genealogies on the $2$-torus.

        In Section~\ref{S: discussion}, we contrast our results with the work in \cite{WiltonEtAl2017} and provide a possible way to harmonize our paper and theirs. We then quickly describe some opportunities for future work.

\section{A diploid structured population model with migration and its coalescent}\label{S: model_and_coalescent}

    In Section~\ref{SS: model} we construct a diploid structured population with large migrations and uneven offspring distributions on a generic finite directed graph. We proceed to describe the pedigree and the corresponding ancestral process generated by this model in Section~\ref{SS: pedigree_coalescent}.

    \subsection{A diploid structured population with large migrations and uneven offspring distributions}\label{SS: model}
    
        Let $G =\lp{V,E}$ denote a finite directed graph. The vertices shall correspond to population demes and the directed edges $E$ will correspond to migration routes between demes. To each vertex $v$ in $V$ we assign a relative size $s(v) \in (0,\infty)$. Each deme $v$ will be kept a constant size $N(v) = \lfloor s(v) N \rfloor$. Let $V_N$ denote the vertex set
        \begin{equation*}
            \{(v,i): v \in V, i \in [N(v)]\},
        \end{equation*}
        where $[k] = \{1,2,\ldots, k\}$ for any natural number $k$. The population model will be governed by the joint distribution of a sample of a $V$-tuple of random symmetric matrices $\mathcal{V} = \lp{\mathcal{V}^v}_{v \in V} = \lp{\lp{\mathcal{V}_{i,j}^v}_{i,j = 1}^{N(v)}}_{v \in V}$ and a random variable $m = \lp{m_{e}}$ taking values in $[0,1]^{E}$. $m$ denotes the migration sizes between demes and $\mathcal{V}$ denotes the \textit{offspring numbers} of couples in each deme. We assume the following:
        \begin{itemize}
            \item Both $V$ and $E$ are finite.
            \item There are no edges from a deme to itself, i.e. for all $v$ in $V$ the edge $(v,v)$ is not an element of $E$.
            \item The entries of $\mathcal{V}$ are all non-negative integers.
            \item The total number of migrants from each deme cannot exceed the population of that deme, i.e. $\sum_{e = (v,w) \in E} m_e \leq 1$ for every deme $v$.
            \item We exclude selfing, i.e. $\mathcal{V}_{i,i}^v = 0$ for all $v \in V$ and $i$ in $[N(v)]$.
            \item The total number of offspring in each deme $v$ satisfies $\sum_{i < j}^{N(v)} \mathcal{V}_{i,j}^v = N(v) + \sum_{e=(w,v) \in E} \lfloor m_{e} N(w)\rfloor - \sum_{e=(v,w)\in E} \lfloor m_{e} N(v)\rfloor$.
            \item The matrix $\mathcal{V}$ is exchangeable under any permutations that fix demes conditional on $m$. That is,let $S_r$ denote the group of permutations on $r$ elements and $\sigma = \lp{\sigma(v)}_{v \in V} \in \prod_{v \in V} S_{N(v)}$. Then
            \begin{equation*}
                \lp{\lp{\lp{\mathcal{V}^v_{i,j}}_{i,j \in [N(v)]}}_{v \in V}, m} \stackrel{d}{=} \lp{\lp{\lp{\mathcal{V}^v_{\sigma(v)(i),\sigma(v)(j)}}_{i,j \in [N(v)]}}_{v \in V}, m}.
            \end{equation*}
        \end{itemize}
    
        We consider discrete generations indexed by $k \in \Z_+ = \{0,1,2,\ldots\}$, where $k = 0$ denotes the present generation and $k = 1$ is the previous generation, and so on backwards in time. Let $\{(\mathcal{V}(k), m(k))\}_{k \in \Z_+}$ denote a sequence of independent and identically distributed random variables that have the same distribution as $(\mathcal{V},m)$. Reproduction dynamics at each time-step are independent. They involve one stage of migration and one stage of reproduction as follows:
    
        For each $k \in \Z_+$ and each edge $e = (v,w)$ in $E$, $\lfloor N(v) m_{e}(k) \rfloor$ individuals from deme $v$ are chosen uniformly at random without replacement to migrate to deme $w$. Each deme $v$, after migration, therefore contains
        \begin{equation*}
            N^*(v)(k) = N(v) + \sum_{e=(w,v) \in E} \lfloor m_{e}(k) N(w)\rfloor - \sum_{e=(v,w)\in E} \lfloor m_{e}(k) N(v)\rfloor
        \end{equation*}
        individuals. These $N^*(v)$ individuals are the children of parental couples from the previous time-step $k+1$ following the random matrix $\mathcal{V}^{v}(k)$. This may be viewed, for instance, as having each of the $N^*(v)$ children be a ball thrown into the $\binom{N(v)}{2}$ parental couples, viewed as boxes, where the total count in the $(i,j)$ box is conditioned to be $\mathcal{V}_{i,j}^v$. An example of the reproductive dynamics this model may generate is shown in Figure~\ref{F: pedigree_example}.
        
        \begin{figure}[ht!]
            \centering
            \includegraphics[width=0.8\linewidth]{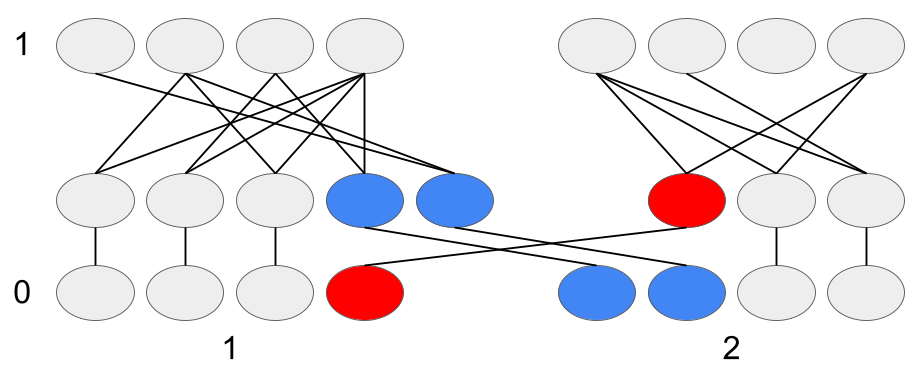}
            \caption{Here we see two demes containing $4$ individuals each so that $N(1) = N(2) = 4$. One individual from deme $1$, marked in red, migrates to deme $2$, and two individuals from deme $2$, marked in blue, migrate to deme $1$, so that $N^*(1)(0) = 5$ and $N^*(2)(0) = 3$. The two demes then experience some reproductive event, with the parental relationships tracked by two edges emanating from a child to their two parents. For example, the couple $(1,2)$ have a single child, the second individual in blue, and so $\mathcal{V}_{1,2}^1 =1$. At the same time the couple $(2,4)$ have two children together, the third and fifth individuals after migration. Therefore $\mathcal{V}_{2,4}^1 = 2$. The third individual in deme $2$ has no offspring, so $\mathcal{V}^2_{3,j} = 0$ for all $j$.}
            \label{F: pedigree_example}
        \end{figure}
    
        The population is diploid, and so each individual carries two copies of each chromosome. Following Mendel's law of random segregation, each gene copy in the offspring chooses independently from the two gene copies in the parent from which the gene copy is inherited. One gene comes from one parent and the other gene comes from the other parent.

        In each deme $v$, the total number of offspring $\mathcal{V}_i^v$ of the $i$th individual is
        \begin{equation*}
            \mathcal{V}_i^v :=\sum_{j \neq i} \mathcal{V}_{i,j}^v.
        \end{equation*}
        Following \cite[p. 3]{birkner2018}, we write $\mathcal{V}^v_{(1)} \geq \mathcal{V}^v_{(2)} \geq \ldots \geq \mathcal{V}_{(N(v))}^v$ for the ranked version of $\lp{\mathcal{V}_i^v}_{i=1}^{N(v)}$. As $\sum_{i} \mathcal{V}_i^v = 2N^*(v)$, we can define
        \begin{equation*}
            \widetilde{\mathcal{V}}^v_N := \lp{\frac{\mathcal{V}^v_{(1)}}{2N^*(v)}, \frac{\mathcal{V}^v_{(2)}}{2N^*(v)}, \ldots, \frac{\mathcal{V}^v_{(N(v))}}{2N^*(v)}, 0, 0, \ldots}
        \end{equation*}
        as the ranked total offspring frequencies, viewed as an element in the infinite-dimensional simplex
        \begin{equation*}
            \Delta := \{x \in [0,1]^\infty: x_1 \geq x_2 \geq \ldots \geq 0, \sum_i x_i \leq 1\}.
        \end{equation*}
        For an element $x$ in $\Delta$ we denote by $\la{x} := \sum_i x_i$ and $\left\langle x,x \right\rangle = \sum_i x_i^2$. We put $\mathbf{0}$ to mean the element $(0,0,\ldots)$ in $\Delta$. $\Delta$ is a compact metric space with the topology given by the metric
        \begin{equation*}
            d_{\Delta}(x,y) := \sum_i 2^{-i} |x_i-y_i|.
        \end{equation*}

    \subsection{The pedigree and its ancestral process}\label{SS: pedigree_coalescent}

        The population dynamics described in Section~\ref{SS: model} induce a random directed graph $\mathcal{G}_N$, called the \emph{pedigree}, which encodes the complete history of reproductive relationships in the population. Following \cite{nfw25_2}, we now give a formal definition. Let
        \begin{equation*}
            V_N = \{\{v\} \times [N(v)] : v \in V\}
        \end{equation*}
        denote the population at any fixed time-step.
        
        \begin{definition}[The pedigree]\label{D: pedigree}
            The pedigree is an undirected multigraph $\mathcal{G}_N$ with vertex set $V_N \times \Z_+$. A vertex $(v,i,k)$ corresponds to individual $i$ in deme $v$ at time-step $k$. Edges of $\mathcal{G}_N$ connect vertices in consecutive time-steps. In particular, there is an edge between $(v,i,k)$ and $(w,j,k+1)$ if the $i$th individual in deme $v$ at time-step $k$ (migrating to deme $w$ if $w \neq v$) is the child of the $j$th individual in deme $w$ at time-step $k+1$.
        \end{definition}
        
        The pedigree captures the total population history shared across the genome, coupling genealogies at loci that may be arbitrarily far apart or lie on distinct chromosomes. Classical coalescent theory \cite{kingman1982, sagitov1999general, pitman1999, schweinsberg} implicitly averages over the pedigree, describing the marginal genealogy at a single locus. Our goal is instead to describe the law of gene genealogies conditional on the pedigree, thereby capturing the distribution of genealogies across unlinked loci. For more theoretical discussion on the significance of the pedigree we refer to the section ``Previous Work on Pedigrees" in \cite{dfbw24}.
        
        Let $\{X_i^N(0)\}_{i=1}^n$ be a sample of $n$ gene copies drawn from $V_N \times \{0,1\}$ at time-step $0$. For simplicity, we assume the samples are drawn from $n$ distinct individuals
        \begin{equation*}
            \{\widehat{X}_i^N(0)\}_{i=1}^n \subset V_N ,
        \end{equation*}
        with one gene copy per individual.
        
        For each $k \in \Z_+$, let $X_i^N(k)$ denote the ancestral gene copy of $X_i^N(0)$ at time-step $k$. Under the model of Section~\ref{SS: model}, we write
        \begin{equation*}
            X_i^N(k)
            =
            \lp{\widehat{X}_i^N(k), M_i^N(k)}
            \in V_N \times \{0,1\},
            \qquad i \in [n],
        \end{equation*}
        where $\widehat{X}_i^N(k)$ denotes the individual carrying the gene copy and $M_i^N(k)$ is an independent Bernoulli$\lp{\tfrac{1}{2}}$ random variable specifying which of the two gene copies is inherited. The individual $\widehat{X}_i^N(k+1)$ is chosen uniformly at random among those $y$ such that $(y,k+1)$ is adjacent to $(\widehat{X}_i^N(k),k)$ in the pedigree.
        
        \begin{definition}[Ancestral line]
        For each $i \in [n]$, the $V_N \times \{0,1\}$-valued process
        \begin{equation*}
            X_i^N = \lp{X_i^N(k)}_{k \in \Z_+}
        \end{equation*}
        is called the ancestral line of the $i$th sampled gene copy.
        \end{definition}
        
        The ancestral lines $\lp{X_i^N}_{i \in [n]}$ form a family of correlated Markov processes on $V_N \times \{0,1\}$. Their coalescent structure is encoded by a process $\chi^{N,n}$, called the ancestral process, taking values in a space of partitions with type.
        
        We first introduce partition spaces without type, as in \cite{abfw25} and others. Let $\mathcal{E}_n$ denote the set of partitions $\xi = \{C_1,\ldots,C_b\}$ of $[n]$, where the blocks are ordered by least element. We write $|\xi| = b$ for the number of blocks. To encode diploidy, we define a refinement $\mathcal{S}_n$ of $\mathcal{E}_n$ as follows. An element $\xi \in \mathcal{S}_n$ consists of a partition
        \begin{equation*}
            \xi = \{C_1,\ldots,C_b\}
        \end{equation*}
        together with a pairing of $2x$ of its blocks, where
        \begin{equation*}
            1 \leq x \leq \left\lfloor \frac{b}{2} \right\rfloor .
        \end{equation*}
        We write such a configuration as
        \begin{equation*}
            \xi
            =
            \lp{
            (C_1,C_2),\ldots,(C_{2x-1},C_{2x}),
            C_{2x+1},\ldots,C_b
            } .
        \end{equation*}
        The paired blocks correspond to two ancestral gene copies carried by the same diploid individual, while the unpaired blocks correspond to individuals carrying a single ancestral gene copy. We write $\norm{\xi} := x$ for the number of paired individuals. We identify $\mathcal{E}_n$ as the subset of $\mathcal{S}_n$ consisting of configurations with no paired blocks. There is a natural projection \cite{birkner2018, bbe13}, the complete dispersion map $cd$, from $\Sn$ to $\En$ defined by
        \begin{equation*}
            cd(\{(C_1, C_2), \ldots, (C_{2x-1}, C_{2x}), C_{2x+1}, \ldots, C_b\}) = \{C_i\}_{i=1}^b.
        \end{equation*}
        
        A partition with type in $F$, where $F$ is an arbitrary topological space, is a pair $(\xi,f)$, where $\xi \in \mathcal{E}_n$ and
        \begin{equation*}
            f = \lp{f_r}_{r=1}^{|\xi|} \in F^{|\xi|}.
        \end{equation*}
        We denote the space of partitions with type in $F$ by $\mathcal{E}_n(F)$. Similarly, a diploid partition with type in $F$ is a pair $(\xi,f)$, where $\xi \in \mathcal{S}_n$ and
        \begin{equation*}
            f = \lp{f_r}_{r=1}^b \in F^b ,
            \qquad b = |\xi|.
        \end{equation*}
        We denote the corresponding space by $\mathcal{S}_n(F)$. We equip $\mathcal{E}_n(F)$ and $\mathcal{S}_n(F)$ with the disjoint-union topology
        \begin{equation*}
            \bigsqcup_{\xi \in \mathcal{E}_n} \{\xi\} \times F^{|\xi|}
            \quad \text{and} \quad
            \bigsqcup_{\xi \in \mathcal{S}_n} \{\xi\} \times F^{|\xi|},
        \end{equation*}
        respectively.

        As the evolution of gene genealogies backwards in time is informed by the location of the lineages, we track this extra information by viewing genealogies as $\Sn(V)$-valued processes as follows.
        
        \begin{definition}[Ancestral process]
            We define the ancestral process $\chi^{N,n} = \lp{\chi^{N,n}(k)}_{k \in \Z_+}$ as an $\mathcal{S}_n(V)$-valued stochastic process
            as follows: For each $k \in \Z_+$, $\chi^{N,n}(k) = (\xi,v)$, with $v = \lp{v_r}_{r=1}^b \in V^b$, if
            \begin{itemize}
                \item $i \sim_\xi j$ if and only if $X_i^N(k) = X_j^N(k)$,
                \item the blocks containing $i$ and $j$ are paired in $\xi$ if and only if $\widehat{X}_i^N(k) = \widehat{X}_j^N(k)$ and $M_i^N(k) \neq M_j^N(k)$,
                \item if $i \in C_r$, then $\widehat{X}_i^N(k)$ lies in deme $v_r$.
            \end{itemize}
        \end{definition}
        
        Our main results concern suitable scaling limits of $\chi^{N,n}$ under the complete dispersion map
        \begin{equation*}
            cd_V : \mathcal{S}_n(V) \to \mathcal{E}_n(V),
            \qquad
            (\xi,v) \mapsto \lp{cd(\xi),v}.
        \end{equation*}

\section{An annealed scaling limit}\label{S: annealed_result}

    To connect to existing results on the structured coalescent and multi-type $\Xi$-coalescents, including \cite{notohara90, herbots97, eldon09, notohara16, mohle24, daipra25}, we consider the \emph{annealed law}, obtained by averaging over the random pedigree, of the ancestral process in this section. To this end, for each deme $v$ we define the \emph{coalescence timescale} $c_N^v$ as the probability that two genes sampled uniformly at random from two distinct individuals in deme $v$ coalesce in a single time-step, conditional on neither migrating. Then
    \begin{equation*}
        c_N^v = \frac{1}{8}\mathbb{E}\lb{\frac{\mathcal{V}_1^v(\mathcal{V}_1^v-1)}{N(v)-1}}.
    \end{equation*}
    This is readily compared with \cite[Equation 1.4]{birkner2018} for the single deme case.

    Fix an arbitrary reference deme $v_0 \in V$. Convergence will be established for the time-rescaled ancestral process $cd_V(\overline{\chi}^{N,n})$, where
    \begin{equation}\label{E: ancestral_process_defnition}
        \overline{\chi}^{N,n}
        =
        \lp{
            \chi^{N,n}(\lfloor t \lp{c_N^{v_0}}^{-1}\rfloor)
        }_{t \in \R_+}.
    \end{equation}
    We view $cd_V(\overline{\chi}^{N,n})$ as a random element of the Skorokhod space $\mathcal{D}\lp{\R_+, \mathcal{E}_n(V)}$
    equipped with the $J_1$ topology. See \cite{ek09} for a reference for the Skorokhod space with the $J_1$ topology.
    
    We impose the following assumptions on the asymptotic behavior of the model as $N \to \infty$.

    \begin{assumption}[Convergence of the initial condition]\label{A: IC}
        Suppose that $\chi^{N,n}(0)$ converges in law to $\chi^n(0)$ in $\En(V)$, and that the $n$ samples are all from $n$ distinct individuals.
    \end{assumption}
    
    \begin{assumption}[Continuous timescale]\label{A: continuous}
        For every deme $v \in V$,
        \begin{equation*}
            \lim_{N \to \infty} c_N^v = 0.
        \end{equation*}
    \end{assumption}
    
    \begin{assumption}[Comparable timescales]\label{A: comparable}
        For every deme $v \in V$,
        \begin{equation*}
            \lim_{N \to \infty} \frac{c_N^v}{c_N^{v_0}} = c(v) \in (0,\infty).
        \end{equation*}
    \end{assumption}

    We define the \textit{joint} law of the ordered offspring frequencies and the migration frequencies $\Phi_N$ by
    \begin{equation*}
        \Phi_N := \PP\lp{(\widetilde{\mathcal{V}}_N, m_N) \in \cdot}
    \end{equation*}
    as a measure on $\Delta^V \times [0,1]^E$, where $\widetilde{\mathcal{V}}_N := \lp{\widetilde{\mathcal{V}}_N^v}_{v \in V}$. We define a metric $d$ on $\Delta^V \times [0,1]^E$ by
    \begin{equation*}
        d((x,m), (y,n)) := \sup_{v \in V}\lp{ \sum_{i \in \N} (x_i^v - y_i^v)^2}^{\frac{1}{2}} + \norm{m - n}_{\infty},
    \end{equation*}
    where $\norm{\cdot}_{\infty}$ denotes the infinity norm on $[0,1]^E$.
    Under this metric $\Delta^V \times [0,1]^E$ is a compact metric space. We let $\mathbf{0}_V = \lp{\mathbf{0}}_{v \in V}$ and $\mathbf{0}_E = \lp{0,0,\ldots, 0} \in [0,1]^E$, and define $\mathbf{0}_{V,E} = (\mathbf{0}_V, \mathbf{0}_E) \in \Delta^V \times [0,1]^E$. By $B(r)$ we will denote a ball of radius $r$ centered at $\mathbf{0}_{V,E}$ in $\Delta^V \times [0,1]^E$ under the metric $d$, and its complement $\Delta^V \times [0,1]^E \setminus B(r)$ is denoted by $B(r)^c$. We assume that $\Phi_N$ is sharply concentrated around $\mathbf{0}_{V,E}$ in the following sense.

    \begin{assumption}[Rarity of large migrations and reproduction]\label{A: rarity}
        We assume that
        \begin{equation*}
            \frac{1}{c_N^{v_0}}\,\Phi_N
            \;\to\;
            \Phi
        \end{equation*}
        vaguely on $\Delta^V \times [0,1]^E \setminus \{\mathbf{0}_{V,E}\}$, where $\Phi$ is a $\sigma$-finite measure.
    \end{assumption}

    \begin{remark}
        We provide necessary and sufficient criteria for the convergence of Assumption~\ref{A: rarity} in Section~\ref{App: weak_convergence}. It is the exact analogue of \cite[Equation 1.6]{birkner2018} for our structured model, which gives simple moment conditions for the desired convergence.
    \end{remark}

    \begin{remark}
        Unlike in \cite{birkner2018, abfw25} we do not enforce that the total mass of $\Phi$ marginally at each copy of $\Delta$ to be one. Indeed, in general, the coalescence rates at different demes will be different, requiring that marginally the total mass at each copy of $\Delta$ corresponds to the difference in scale of the coalescence rates.
    \end{remark}

    \begin{assumption}[Migration rate convergence]\label{A: migration_convergence}
        For each edge $e = (v,w) \in E$,
        \begin{equation*}
            \lim_{N \to \infty}
            \frac{1}{c_N^{v_0}}
            \PP\lp{ \widehat{X}_1(1) \in \{w\} \times \Z_+ \mid \widehat{X}_1(0)\in \{v\} \times \Z_+ }
            =
            \bar{\mu}_e .
        \end{equation*}
    \end{assumption}
    \begin{remark}
        Note that this condition is equivalent to
        \begin{equation*}
            \lim_{N \to \infty} \frac{1}{c_N^{v_0}}\frac{\mathbb{E}\lb{\lfloor N(v) m_{(v,w)} \rfloor}}{N(v)} = \bar{\mu}_{(v,w)}
        \end{equation*}
        for every $(v,w)$ in $E$. In particular, when $m_{(v,w)}$ is supported on $\{0,\frac{1}{N}, \ldots, \frac{N-1}{N}, 1\}$ it is equivalent to 
        \begin{equation*}
            \lim_{N \to \infty}\frac{1}{c_N^{v_0}}\mathbb{E}\lb{m_{(v,w)}} = \bar{\mu}_{(v,w)}.
        \end{equation*}
    \end{remark}

    We now describe the $(x,m,\xi)$-paintbox associated with $(x,m) \sim \Phi$ and $\xi \in \mathcal{E}_n(V)$.  
    Write $x = (x^v)_{v \in V}$ with $x^v = (x_1^v, x_2^v, \ldots) \in \Delta$.  
    Suppose that $\xi$ contains $b_v$ blocks in deme $v$.
    
    Define the interval space
    \begin{equation*}
        I_V := \bigsqcup_{v \in V} [0,1]
    \end{equation*}
    to be the disjoint union of $V$ copies of $[0,1]$.
    To each block of $\xi$ in deme $v$, assign independently a point in $I_V$ with intensity
    \begin{equation*}
        \delta_v \lp{ 1 - \sum_{e=(v,w)\in E} m_e } \otimes dx
        \;+\;
        \sum_{w \in V} \delta_w \, m_{(v,w)} \otimes dx.
    \end{equation*}
    This intensity is such that any block begining in deme $v$ is thrown uniformly at random on the interval for deme $w$ with probability $m_{(v,w)}$ for $w \neq v$, and probability $1-\sum_{w \neq v} m_{(v,w)}$ for being thrown uniformly at random on the interval for deme $v$.
    Blocks are identified if their points lie in the same deme and in the same interval determined by $x^v$. Denote by $q_n(x,m)(\xi,\eta)$ the probability that an element $\xi$ coalesces into the element $\eta$, conditional on $(x,m)$.
    \begin{remark}
        Note that when $V$ is a singleton and $E$ is empty that this is exactly the Kingman $x$-paintbox construction \cite{berestycki04}.
    \end{remark}
    The quantity $q_n$ may be calculated explicitly as follows: Suppose that, among the $b_v$ blocks in deme $v$ that $t_{(v,w)}$ of them travel from deme $v$ to deme $w$, and that a total of $t_{(v,v)}$ of them remain in deme $v$. After these migrations, we write the number of blocks in deme $v$ by $b_{v}'$ and temporarily view them each as an element $\xi_v$ in $\mathcal{E}_{b'_v}$. The restriction $\eta_v$ of $\eta$ to deme $v$, viewed as an element of $\mathcal{E}_{b'_v}$ may be viewed as keeping $n_v$ of the blocks of $\xi_v$ the same, while coalescing the remaining $b'_v - s_v$ blocks of $\eta_v$ into $r$ blocks made up of $y_1,y_2,\ldots, y_r$ of the blocks of $\eta_v$. The probability that this coalesce occurs in deme $v$ after the described migration (compare with Equation 1.6 in \cite{birkner2018}) is
    \begin{equation*}
        \phi_{b'_v}(y_1,\ldots, y_r) = \sum_{i_1,i_2,\ldots, i_r = 1 \text{ distinct}}^{\infty} \lp{1-\sum_{j=1}^r x_{i_j}^v}\prod_{j=1}^r (x_{i_j}^v)^{y_j}.
    \end{equation*}
    Therefore we may write $q_n(x,m)(\xi,\eta)$, by writing $m_{(v,v)} = 1- \sum_{w \neq v}m_{(v,w)}$, as 
    \begin{equation*}
        \prod_{v \in V} \phi_{b'_v}(y_1,\ldots, y_r) m_{(v,v)}^{b_v - t_{(v,v)}} \prod_{w \in V} m_{(v,w)}^{t_{(v,w)}}.
    \end{equation*}

    For $\xi \neq \eta$ and $\Phi$ a measure on $\Delta^V \times [0,1]^E \setminus \{\mathbf{0}_{V,E}\}$, define $q_n(\Phi)(\xi,\eta)$ to be rate at which, averaged over $\Phi$, an $(x,m)$-paintbox sends $\xi$ to $\eta$, i.e.
    \begin{equation*}
        q_n(\Phi)(\xi,\eta) = \int q_n(x,m)(\xi,\eta) d\Phi(x,m).
    \end{equation*}
    Further, we set
    \begin{equation*}
        q_n(\Phi)(\xi,\xi)
        =
        - \sum_{\eta \neq \xi} q_n(\Phi)(\xi,\eta).
    \end{equation*}
    We define then a transition rate matrix $Q_n(\Phi) := \lp{q_n(\Phi)(\xi,\eta)}_{\xi,\eta \in \En(V)}$.

    Finally we assume some local integrability assumption for the $\Phi$ of Assumption~\ref{A: rarity}. 
    \begin{assumption}\label{A: integrability}
        We assume the following local integrability condition near $\mathbf{0}_{V,E}$:
        \begin{equation*}
            \sup_{\xi \in \En(V)}\int_{\Delta^V \times [0,1]^E}
            \lp{1-q_n(x,m)(\xi,\xi)}
            d\Phi(x,m)
            \;<\;\infty.
        \end{equation*}
    \end{assumption}
    \begin{remark}\label{R: integrability_condition}
        For $v$ in $V$ and $e$ in $E$ let $\Phi_v$ and $\Phi_e$ denote the marginal distribution of $\Phi$ at $v$ and $e$, respectively. Assumption~\ref{A: integrability} follows so long as the marginal laws of $\Phi$ are well-behaved, i.e. so long as 
        \begin{align*}
            \sup_{v \in V} \int_{\Delta} \left\langle x^v, x^v \right\rangle d\Phi_v(x^v) &< \infty \text{ and }\\
            \sup_{e \in E} \int_{[0,1]} m_e d\Phi_e(m_e) &< \infty.
        \end{align*}
    \end{remark}

    Finally, we have a technical assumption on the migration sizes that we assume can be relaxed, but which we have not been able to remove from the proof.
    \begin{assumption}[No total migrations] \label{A: no_total_migrations}
        We assume, for any deme $v$ in $V$, that $\Phi\lp{\sum_{e=(v,w)\in E} m_e \in \cdot}$ as a law on $[0,1]$ has no atom at $1$.
    \end{assumption}

    Combining Assumptions~\ref{A: migration_convergence} and \ref{A: rarity}, the neutral migration rate along edge $e$ is
    \begin{equation*}
        \mu_e
        =
        \bar{\mu}_e
        -
        \int_{[0,1]} m_e \, d\Phi_e(x,m) .
    \end{equation*}
    
    Similarly, combining Assumptions~\ref{A: comparable} and \ref{A: rarity}, the total neutral Kingman coalescence rate in deme $v$ is
    \begin{equation*}
        \kappa_v
        =
        c(v)
        -
        \frac{1}{2}\int_{\Delta} \langle x^v, x^v \rangle \, d\Phi_v(x^v).
    \end{equation*}
    
    For $\kappa = \lp{\kappa_v}_{v \in V} \in [0,\infty)^V$, we define the $V$-structured Kingman $n$-generator
    $K_n(\kappa) = (k_n(\kappa)(\xi,\eta))_{\xi,\eta \in \mathcal{E}_n(V)}$ by
    \begin{equation*}
        k_n(\kappa)(\xi,\eta)
        =
        \begin{cases}
            -\displaystyle\sum_{v \in V} \binom{b_v}{2} \kappa_v,
            & \eta = \xi,\\
            \kappa_v&,
             \eta \text{ obtained from } \xi \text{ by coalescing two blocks in deme } v, \\
            0, & \text{otherwise.}
        \end{cases}
    \end{equation*}
    
    Similarly, for any $\mu = \lp{\mu_e}_{e \in E} \in [0,\infty)^E$, we define the migration generator $M_n(\mu) = (m_n(\mu)(\xi,\eta))_{\xi,\eta \in \mathcal{E}_n(V)}$ by
    \begin{equation*}
        m_n(\mu)(\xi,\eta)
        =
        \begin{cases}
            \mu_{(v,w)},
            & \text{ if }\eta \underset{v\to w}{\prec}\xi,\\
            -\displaystyle\sum_{(v,w)\in E} b_v\,\mu_{(v,w)},
            & \text{ if } \eta = \xi, \\
            0, & \text{otherwise.}
        \end{cases}
    \end{equation*}
    Here $\eta \underset{v\to w}{\prec}\xi$ indicates that $\eta$ may be obtained by moving one block of $\xi$ in the deme $v$ to the deme $w$.

    Finally, we introduce the halving map $h: \Delta \to \Delta$ (compare with $\varphi$ in the statement of Theorem 1.1 in \cite{birkner2018})  that splits each interval in twain as follows:
    \begin{equation*}
        h(x) := \lp{\frac{1}{2}x_1, \frac{1}{2}x_1, \frac{1}{2}x_2, \frac{1}{2}x_2, \ldots}.
    \end{equation*}
    We define an extension $h_V: \Delta^V \times [0,1]^E\to \Delta^V \times [0,1]^E$ of $h$ by
    \begin{equation*}
        h_V(x,m) = \lp{(h(x^v))_{v \in V}, m}.
    \end{equation*}

    \begin{definition}\label{D: structured_coalescent}
        A $(\Phi, \kappa, \mu)$-$n$-coalescent $\chi^n$ is a $\mathcal{E}_n(V)$-valued process with transition rate matrix $K_n(\kappa) + M_n(\mu) + Q_n(\Phi)$.
    \end{definition}

    \begin{theorem}\label{T: annealed}
        Suppose that Assumptions~\ref{A: IC}, \ref{A: continuous}, \ref{A: comparable}, \ref{A: rarity}, \ref{A: migration_convergence}, \ref{A: integrability}, and \ref{A: no_total_migrations} hold. Then $cd_V\lp{\overline{\chi}^{N,n}}$ converges in distribution in $\mathcal{D}\lp{\R_+, \En(V)}$ to a $((h_V)_*\Phi, \kappa, \mu)$-$n$-coalescent.
    \end{theorem}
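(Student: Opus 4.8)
The plan is to prove convergence by the route standard for finite-state Markov chains. Since $n$, $V$ and $E$ are fixed and finite, the state space $\En(V)$ is finite, so it suffices to establish convergence of the time-rescaled generators together with the convergence of initial conditions already supplied by Assumption~\ref{A: IC}. The full diploid ancestral process $\chi^{N,n}$ is a genuine Markov chain on the finite set $\Sn(V)$, with one-step transition matrix $P_N$ obtained by composing the migration and reproduction stages of Section~\ref{SS: model}. Writing $A_N := \lp{P_N - I}/c_N^{v_0}$ for the rescaled one-step generator, I would show that, after projecting along $cd_V$, the generators $A_N$ converge entrywise to $K_n(\kappa) + M_n(\mu) + Q_n\lp{(h_V)_*\Phi}$, and then invoke the semigroup-convergence machinery for Markov processes on a finite (compact) state space \cite{ek09}: entrywise convergence of conservative generators to a bounded limit, together with $cd_V\lp{\chi^{N,n}(0)} \Rightarrow cd_V\lp{\chi^n(0)}$, upgrades to convergence of finite-dimensional distributions, and tightness in $\mathcal{D}\lp{\R_+, \En(V)}$ under $J_1$ is automatic on a finite state space once the jump rates are uniformly bounded.

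A structural observation makes the projection tractable. The map $cd_V$ forgets only the \emph{pairing} of blocks, not the coalescence structure: two ancestral gene copies lying in the same individual but on distinct chromosomes---``paired'' in $\Sn$---are sent by $cd_V$ to two distinct blocks, exactly as two gene copies in distinct individuals are. Hence $cd_V\lp{\chi^{N,n}}$ changes value only at genuine coalescences and migrations, and a paired configuration, once entered, is resolved within a single generation: because selfing is excluded, the two chromosomes of a shared parent are inherited from two distinct grandparental individuals, so the paired gene copies occupy distinct individuals one step later with probability one. Thus the excursions into paired states occupy time intervals of length $O\lp{c_N^{v_0}}$ that are invisible in the $J_1$ limit. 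This is precisely why the limit of $cd_V\lp{\chi^{N,n}}$ is Markov even though $cd_V\lp{\chi^{N,n}}$ is not Markov at finite $N$ (the pairing of two otherwise identical-looking blocks carries a one-generation coalescence ``cooldown''); I would make this rigorous by showing that the occupation time of paired states over any compact interval tends to zero in probability, so the projected chain is asymptotically indistinguishable from the $\En(V)$-valued chain read off from the unpaired states of $A_N$.

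For the generator computation I would condition on the environment $(\widetilde{\mathcal{V}}_N, m_N) \sim \Phi_N$ and split $\Delta^V \times [0,1]^E$ into the small regime $B(r)$ and the large regime $B(r)^c$. On $B(r)^c$, Assumption~\ref{A: rarity} gives $\lp{c_N^{v_0}}^{-1}\Phi_N \to \Phi$ vaguely, while the one-step conditional law of a large event is, up to $o(1)$ errors, the gene-copy paintbox: since each parental individual carries two chromosomes and offspring inherit them by independent fair Mendelian segregation, a parent of offspring frequency $x_i^v$ splits into two gene-copy classes of frequency $x_i^v/2$ each, so the governing intensity is $q_n\lp{h_V(x,m)}$ rather than $q_n(x,m)$; integrating against $\Phi$ on $B(r)^c$ produces the restriction of $Q_n\lp{(h_V)_*\Phi}$. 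On the small regime $B(r)$ I would expand to leading order in $c_N^{v_0}$: single-block displacements give migration at the rates of Assumption~\ref{A: migration_convergence}, and the only non-negligible within-deme coincidences are pairwise, producing Kingman-type coalescence whose total rescaled rate is the limit $c(v)$ of $c_N^v / c_N^{v_0}$ furnished by Assumption~\ref{A: comparable}. Subtracting from $\bar{\mu}_e$ and $c(v)$ the first- and second-moment mass already carried by the large events---note $\qv{h(x^v),h(x^v)} = \tfrac12\qv{x^v,x^v}$, which is exactly the source of the factor $\tfrac12$ in the definition of $\kappa_v$---leaves precisely the neutral rates $\mu_e = \bar{\mu}_e - \int m_e\,d\Phi_e$ and $\kappa_v = c(v) - \tfrac12\int \qv{x^v,x^v}\,d\Phi_v$, i.e. $M_n(\mu) + K_n(\kappa)$.

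The genuine difficulty---the main obstacle---is making this separation of scales rigorous, namely interchanging the limits $N \to \infty$ and $r \to 0$ so that the $B(r)$ and $B(r)^c$ contributions glue into the single generator $K_n(\kappa) + M_n(\mu) + Q_n\lp{(h_V)_*\Phi}$ independently of $r$. Two estimates must close. First, for each fixed $N$ the within-ball events must contribute, to leading order in $c_N^{v_0}$, only pairwise coalescence and single migration; triple and higher coincidences in a single small generation must be shown to be $o\lp{c_N^{v_0}}$ uniformly. Second, as $r \to 0$ the paintbox restricted to $B(r)^c$ must increase to the full, finite generator $Q_n\lp{(h_V)_*\Phi}$ while the pairwise and migration mass drawn from $B(r)\setminus\{\mathbf{0}_{V,E}\}$ is correctly reabsorbed into $\kappa$ and $\mu$ and otherwise vanishes. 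This is exactly where Assumption~\ref{A: integrability} is essential: via Remark~\ref{R: integrability_condition} it provides uniform integrability of $\qv{x^v,x^v}$ and of $m_e$ against $\Phi$, which guarantees both that $\sup_{\xi}\int \lp{1-q_n(x,m)(\xi,\xi)}\,d\Phi < \infty$, so that $Q_n\lp{(h_V)_*\Phi}$ is a bounded generator, and that the small-jump tails over $B(r)\setminus\{\mathbf{0}_{V,E}\}$ are uniformly controlled as $r \to 0$. Finally, Assumption~\ref{A: no_total_migrations} is needed to exclude degenerate full-deme migration events, for which the block-counting bookkeeping $t_{(v,w)}, b_v'$ entering the definition of $q_n$---and hence the paintbox intensity---would be discontinuous; with it the small-jump estimate closes, the two regimes combine, and the limiting generator is identified as that of Definition~\ref{D: structured_coalescent}, completing the proof.
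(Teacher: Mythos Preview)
Your proposal is correct but takes a genuinely different route from the paper. The paper derives Theorem~\ref{T: annealed} as an immediate corollary of the quenched result Theorem~\ref{T: quenched}: once the conditional laws $\PP\lp{cd_V\lp{\overline{\chi}^{N,n}}\in\cdot\mid\mathcal{A}_N}$ converge in distribution to $\PP\lp{\chi^n\in\cdot\mid\Psi}$, a general lemma (from \cite{nfw25_2}) says their intensity measures---the annealed laws---converge weakly, and the only remaining work is to identify the annealed law of the $\Psi$-driven $(\kappa,\mu)$-$n$-coalescent as having generator $K_n(\kappa)+M_n(\mu)+Q_n\lp{(h_V)_*\Phi}$, which is a one-line annealing of the jump kernel over $\Psi$. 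Your approach is instead a direct generator-convergence argument on the finite state space $\En(V)$, and it is essentially the $l=1$ instance of the coupling machinery the paper builds for general $l$ in Lemmas~\ref{L: small_jumps}, \ref{L: big_jumps}, and \ref{L: combining_scales} in order to prove the quenched theorem. The separation of scales $B(r)$ versus $B(r)^c$, the appearance of $h_V$ through Mendelian segregation, the identity $\qv{h(x^v),h(x^v)}=\tfrac12\qv{x^v,x^v}$ producing the $\tfrac12$ in $\kappa_v$, and the role of Assumption~\ref{A: integrability} in gluing the two regimes are all identified correctly and match what the paper does at the level of those lemmas. Your route is self-contained and classical---it is exactly how one would prove the annealed statement if the quenched statement were not the real target---while the paper's route is shorter only because the heavy lifting has already been done for arbitrary $l$.

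One minor imprecision: your claim that a paired configuration is ``resolved within a single generation'' with probability one is slightly too strong. No selfing guarantees that \emph{that particular} pair of blocks separates into distinct individuals at the next step, but a new pairing can form simultaneously with probability $O\lp{c_N^{v_0}}$. The correct statement, and the one the paper proves in Lemma~\ref{L: geometric_return_D}, is a uniform geometric tail on the return time to the completely dispersed set; combined with the $O\lp{c_N^{v_0}}$ entrance probability this still yields the negligible-occupation-time conclusion you need, so the argument closes as you intend.
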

    \begin{remark}
        Note that when $V$ is a singleton and $E$ is empty, we have the model of \cite{birkner2018}. In this case we have that $\Phi$ is a $\sigma$-finite measure on $\Delta$ for which $\int_{\Delta} \left\langle x,x\right\rangle d\Phi(x) < \infty$. In particular, therefore, there exists a finite measure $\Xi$ on $\Delta$ so that $\Phi(dx) = \frac{1}{\left\langle x, x\right\rangle}\Xi(dx)$, which is precisely the limiting object of \cite[Equation 1.5]{birkner2018}. In particular, \citep[Theorem 1.1]{birkner2018} is implied by Theorem~\ref{T: annealed}.
    \end{remark}
    \begin{remark}
        When $\Phi = 0$, we recover the classical structured coalescent of \cite[Equation 3]{herbots97} for finite graphs. That is, there exists some rates $\mu_e$ along which which lineages migrate independently across each edge $e$ and within each deme $v$ there exist some rate $\kappa_v$ at which lineages coalesce.
    \end{remark}

    Theorem~\ref{T: annealed} will follow as a corollary of the main result of this paper, described in Section~\ref{S: main_result}. It is proven in Section~\ref{App: annealed_convergence}.

\section{Main results}\label{S: main_result}

    To describe the main result, we define a $\Psi$-driven $(\kappa,\mu)$-$n$-coalescent.

    \begin{definition}[$\Psi$-driven $(\kappa,\mu)$-$n$-coalescent]\label{D: Psi_driven}
        Let $\Psi$ be a Poisson point process on $[0,\infty) \times (\Delta^V \times [0,1]^E)$ with intensity measure $dt \otimes d\Phi$ for some measure $\Phi$ on $\Delta^V \times [0,1]^E$ satisfying Assumption~\ref{A: integrability}. A $\Psi$-driven $(\kappa,\mu)$-$n$-coalescent $\chi^n$ is a time-inhomogeneous Markov process taking values in $\En(V)$. Between atom times of $\Psi$, $\chi^n$ evolves independently as a $n$-$(0, \kappa, \mu)$-coalescent. Conditional on $\Psi$ and for each atom $(t,x,m) \in \Psi$, $\chi^n$ performs at time $t$ an $(x,m,\chi^n(t-))$-merger. That is, for all $\xi,\eta \in \En(V)$ we have that
        \begin{equation*}
            \PP\lp{\chi^n(t) = \eta \,\mid\, \Psi, \chi^n(t-) = \xi} = q_n(x,m)(\xi,\eta).
        \end{equation*}
    \end{definition}
    While this is well-defined when $\Phi$ is a finite measure, we want it to be defined for a larger class of measures. To this end, a rigorous pathwise construction showing that this is well-defined for any $\Phi$ satisfying the regularity condition of Assumption~\ref{A: integrability} is given in Section~\ref{SS: driven_coupling}.

    Finally, we describe the mode of convergence of the main theorem. We are interested in the convergence of
    the law of $\overline{\chi}^{N,n}$ conditional on $\mathcal{A}_N$,
    \begin{equation*}
        \PP\lp{cd_V\lp{\overline{\chi}^{N,n}} \in \cdot \,\mid\, \mathcal{A}_N},
    \end{equation*}
    where
    \begin{equation*}
        \mathcal{A}_N := \sigma(\mathcal{G}_N, \widehat{X}_i^N(0): 1 \leq i \leq n)
    \end{equation*}
    is the $\sigma$-algebra generated by the pedigree and the labels of the individuals from whom our sample is taken. The conditional law is a random measure taking values in the space $\mathcal{M}_1(\mathcal{D}\lp{\R_+, \En(V)})$. As $\mathcal{D}\lp{\R_+, \En(V)}$ is a Polish space under the $J_1$ topology, the convergence in distribution of random measures thereon has a straightforward characterization, which we provide here.
    \begin{definition}[Weak convergence in distribution for random measures]
        Let $S$ denote a Polish space. Let $\mu_N, \mu$ denote a sequence of $\mathcal{M}_1(S)$-valued random measures, where $\mathcal{M}_1(S)$ is given the weak topology. $\mu_N$ converges in distribution to $\mu$ if
        \begin{equation*}
            \int_S f(x) d\mu_N(x) \toL \int_S f(x) d\mu(x)
        \end{equation*}
        for any continuous and bounded $f$ of bounded support. This follows \cite[Theorem 4.19]{kallenberg17}.
    \end{definition}
    For a more detailed discussion about weak convergence in distribution on the Skorokhod space with the $J_1$ topology we refer the reader to \cite[Section 6]{nfw25_2}.

    We now state the main result of this paper. 
     \begin{theorem}\label{T: quenched}
        Suppose Assumptions~\ref{A: IC}, \ref{A: continuous}, \ref{A: comparable},\ref{A: rarity}, \ref{A: migration_convergence},\ref{A: integrability}, and \ref{A: no_total_migrations} hold. Let $\chi^n$ be a $\Psi$-driven $n$-$(\kappa,\mu)$-coalescent where $\Psi$ has intensity measure $dt \otimes (h_V)_* \Phi$. Then the law of the quenched coalescent on the pedigree converges weakly in distribution to the law of $\chi^n$ conditional on $\Psi$; viz. 
        \begin{equation*}
            \PP\lp{cd_V\lp{\overline{\chi}^{N,n}} \in \cdot \,\mid\, \mathcal{A}_N} \toL \PP\lp{\chi^n \in \cdot \,\mid\, \Psi}.
        \end{equation*}
    \end{theorem}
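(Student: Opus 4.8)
The plan is to follow the method-of-moments strategy sketched in Section~1.1, reducing the conditional (quenched) statement to an annealed statement about $l$ conditionally independent copies and then proving the latter by a coupling together with a separation-of-scales argument. First I would invoke the reduction of \cite{nfw25_2}: since $\mathcal{D}(\R_+, \En(V))$ is Polish, it suffices to show that for every finite collection of times $t_1 < \cdots < t_k$ and every cylinder event $A$ fixing the states at those times, the random variable $\PP(cd_V(\overline{\chi}^{N,n}) \in A \mid \mathcal{A}_N)$ converges in distribution to $\PP(\chi^n \in A \mid \Psi)$. Each such random variable lies in $[0,1]$, so convergence in distribution is equivalent to convergence of all moments. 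The $l$-th moment has a transparent meaning: taking $l$ copies that are conditionally independent given $\mathcal{A}_N$,
\[
    \mathbb{E}\lb{\PP\lp{cd_V(\overline{\chi}^{N,n}) \in A \mid \mathcal{A}_N}^l}
    =
    \PP\lp{cd_V(\overline{\chi}_1^{N,n}) \in A, \ldots, cd_V(\overline{\chi}_l^{N,n}) \in A},
\]
with the analogous identity for the limit using $\Psi$ in place of $\mathcal{A}_N$. Hence it suffices to prove that the \emph{annealed} joint law of $l$ copies conditionally independent given the pedigree converges to the annealed joint law of $l$ copies of the $\Psi$-driven $(\kappa,\mu)$-$n$-coalescent conditionally independent given $\Psi$. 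The marginal case $l=1$ is precisely Theorem~\ref{T: annealed}.

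To prove the $l$-copy annealed convergence I would construct the coupling of the $l$ conditionally independent copies on a single pedigree: the $l$ loci share the same migration and family structure encoded in $(\mathcal{V}(k), m(k))_k$, but are driven by independent Mendelian segregation choices across loci. The coupled process is a time-homogeneous Markov chain on $(\Sn(V))^l$; after rescaling time by $(c_N^{v_0})^{-1}$ and applying $cd_V$, I would again establish convergence of its finite-dimensional distributions by the method of moments, showing that the rescaled generator converges to that of the target joint process. The target decomposes into a continuous part $\bigoplus_{i=1}^l (K_n(\kappa) + M_n(\mu))$ acting independently on the $l$ coordinates, together with a jump part driven by a single Poisson point process $\Psi$ of intensity $dt \otimes (h_V)_*\Phi$: at each atom $(t,x,m)$ of $\Psi$ all $l$ coordinates jump simultaneously and, conditional on $(x,m)$, independently, so that the joint transition kernel factorizes as $\prod_{i=1}^l q_n(x,m)(\xi_i,\eta_i)$.

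The crux will be the separation of scales, which I would implement by thresholding generations at a level $r>0$. Call generation $k$ \emph{large} if $(\widetilde{\mathcal{V}}_N(k), m_N(k)) \in B(r)^c$ and \emph{small} otherwise. By Assumption~\ref{A: rarity} large generations occur at rate $O(c_N^{v_0})$ per step, so on the coalescent timescale their rescaled times and marks converge to the atoms of $\Psi$ restricted to $B(r)^c$; since the model is exchangeable within demes and $\widetilde{\mathcal{V}}^v_N \to x^v$ in $\Delta$, the joint transition at a large generation converges to the factorized paintbox above, the halving $h_V$ arising because Mendelian segregation splits each parental offspring frequency $x_i^v$ into two gene-copy frequencies of size $x_i^v/2$. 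Over the overwhelmingly numerous small generations I would show, via second-moment bounds on offspring and migrant numbers, that distinct copies interact only with negligible probability, so their accumulated behavior is asymptotically independent across copies and converges to the neutral drift $K_n(\kappa) + M_n(\mu)$; the residual contribution of frequencies just inside $B(r)$ is exactly the integral subtracted in the definitions of $\kappa_v$ and $\mu_e$. Assumption~\ref{A: no_total_migrations} keeps these bounds uniform by preventing any small generation from displacing an entire deme.

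The main obstacle is precisely this bridging step: one must control, uniformly over the $O((c_N^{v_0})^{-1})$ small generations, the cumulative error from cross-copy correlations (two loci falling into the same small family or the same migrant cohort) and from double counting of block pairs, and then exchange the limits $N \to \infty$ and $r \to 0$. This exchange is where Assumption~\ref{A: integrability} is essential: it furnishes a dominating bound, uniform in $N$ and in the starting partition, on the rate $\tfrac{1}{c_N^{v_0}}\int_{B(r)}(1 - q_n(x,m)(\xi,\xi))\,d\Phi_N(x,m)$ contributed by near-threshold generations, so that letting $r \to 0$ recovers the full jump generator $Q_n((h_V)_*\Phi)$ without losing mass at $\mathbf{0}_{V,E}$. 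I would make the separation of scales rigorous using the random-walk-in-random-environment coupling techniques of \cite{abfw25}, building on \cite{bs02, birkner13}.
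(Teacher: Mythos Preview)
Your proposal is correct and follows essentially the same route as the paper's proof in Section~\ref{S: proof_main}: the method-of-moments reduction to $l$ conditionally independent copies (Lemma~\ref{L: method_of_moments}), the $\varepsilon$-thresholding into small and large generations (Lemmas~\ref{L: small_jumps} and~\ref{L: big_jumps}), their combination into a generator approximation (Lemma~\ref{L: combining_scales}), and the identification of the annealed $l$-copy limit generator (Lemma~\ref{L: driven_coupling}). Two small technical caveats where your sketch is imprecise: first, the coupled $l$-copy process is \emph{not} Markov on $(\Sn(V))^l$, since one must also track which blocks from different copies sit in the same diploid individual; the paper enlarges the state space to $\mathcal{H}_{n,l}(V)$ (Section~\ref{SS: coupling}) and then shows separately that time spent in non-dispersed states is negligible on the coalescent scale (Section~\ref{SS: negligible_outside}); second, Assumption~\ref{A: no_total_migrations} is invoked not in the small-generation bound but in the large-jump estimate (proof of Lemma~\ref{L: big_jumps}), where it permits a truncation to $\{\sum_{e=(v,w)} m_e \le 1-\delta\}$ so that without-replacement sampling of migrants and parents is uniformly $O(1/N)$-close to i.i.d.\ sampling.
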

    
    \begin{remark}
        We can see that, so long as $\Phi$ is not the zero measure, $\Psi$ will have atoms. In particular, the limiting object of Theorem~\ref{T: quenched} is a bona fide random measure. In particular, the annealed descriptions of \cite{herbots97, notohara16} do not capture the cross-locus dependencies that are preserved here in our theorem. In fact, the annealed and quenched limits coincide, and unlinked loci really are independent in the infinite population limit, if and only if $\Phi$ is the zero measure.
    \end{remark}
    \begin{remark}
        This theorem implies the main theorem of \cite{abfw25}, where $\Phi = \frac{1}{\left\langle x,x\right\rangle} \Xi$ for $\Xi$ a finite measure on $\Delta \setminus \{\mathbf{0}\}$.
    \end{remark}
    \begin{remark}\label{R: infinite_graph}
        While the model up to this point has only been described for a finite directed graph, it is straightforward to see how the limit would extend to the infinite graph limit. Indeed, so long as an arbitrary lineage tracing itself backwards in time satisfies a compact containment condition, the limiting quenched law for the gene genealogies will follow from this theorem so long as the infinite graph model, restricted to a large enough compact set, satisfies the assumptions of the main theorem.
    \end{remark}

    The proof of Theorem~\ref{T: quenched} is contained in Section~\ref{S: proof_main}. The central idea of the proof is to use the method of moments to compare the quenched finite-dimensional distributions of $\overline{\chi}^{N,n}$ and $\chi^n$. This is possible because the $l$th moment of the quenched finite-dimensional distribution of $\overline{\chi}^{N,n}$ ($\chi^n$) is an annealed finite-dimensional distribution of $l$ conditionally independent realizations of $\overline{\chi}^{N,n}$ ($\chi^n$). We describe a coupling, in a manner following \cite{abfw25}, which allows us to characterize this annealed joint law in Section~\ref{S: coupling}. 

\section{The proof of Theorem~\ref{T: quenched}}\label{S: proof_main}

    The proof essentially proceeds by the method of moments. This is possible, due to a corollary of \cite{nfw25_2}, which guarantees that quenched convergence in finite-dimensional distribution of c\`{a}dl\`{a}g paths is enough to guarantee weak convergence in distribution of quenched laws when the space the paths take values in is Polish and locally compact. We make this reasoning precise in the following lemma.

    \begin{lemma}\label{L: method_of_moments}
        For each $l \in \N$, let $\{\overline{\chi}_i\}_{i = 1}^l$ and $\{\chi_i\}_{i=1}^m$ denote realizations of $\overline{\chi}^{N,n}$ and $\chi^n$, respectively, that are conditionally independent with respect to $\mathcal{A}_N$ and $\Psi$, respectively. If, for each $l$ in $\N$, $\lp{\overline{\chi}_i^{N,n}}_{i=1}^l$ converges in finite-dimensional distribution to $\lp{\chi_i^n}_{i=1}^l$, then $\PP\lp{cd_V\lp{\overline{\chi}^{N,n}} \in \cdot \,\mid\, \mathcal{A}_N}$ converges weakly in distribution to $\PP\lp{\chi^n \in \cdot \,\mid\, \Psi}$.
    \end{lemma}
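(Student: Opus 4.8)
The plan is to invoke the cited corollary of \cite{nfw25_2} to reduce the claimed path-space convergence to quenched convergence in finite-dimensional distribution, and then to establish the latter by the multivariate method of moments, using the conditional independence of the $l$ copies to identify the mixed moments of the quenched finite-dimensional laws with the annealed finite-dimensional laws supplied by the hypothesis.

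First I would reduce to finitely many times. Since $V$ is finite, $\En(V)$ is a finite and hence compact state space, so the corollary of \cite{nfw25_2} applies: quenched convergence in finite-dimensional distribution of $cd_V(\overline\chi^{N,n})$ upgrades to quenched weak convergence in distribution on $\mathcal{D}(\R_+,\En(V))$ under $J_1$. It therefore suffices to fix $k \in \N$ and times $0 \le t_1 < \cdots < t_k$, which we may take to be continuity times of the limit — and in fact every deterministic time is almost surely a continuity time of $\chi^n$, since the atoms of $\Psi$ have diffuse time coordinate under $dt \otimes (h_V)_*\Phi$ — and to show that the random measure
\[
    \nu_N := \PP\lp{\lp{cd_V\lp{\overline\chi^{N,n}(t_1)},\ldots,cd_V\lp{\overline\chi^{N,n}(t_k)}} \in \cdot \,\mid\, \mathcal{A}_N}
\]
on $\En(V)^k$ converges in distribution to $\nu := \PP\lp{(\chi^n(t_1),\ldots,\chi^n(t_k)) \in \cdot \mid \Psi}$.

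Because $\En(V)^k$ is finite, $\nu_N$ is encoded by the $[0,1]$-valued random vector $\lp{p_N(\bar\eta)}_{\bar\eta \in \En(V)^k}$ with $p_N(\bar\eta) := \PP(cd_V(\overline\chi^{N,n}(t_j)) = \eta_j,\ 1\le j \le k \mid \mathcal{A}_N)$, and weak convergence in distribution $\nu_N \to \nu$ is precisely convergence in distribution of these vectors to the analogous vector $\lp{p(\bar\eta)}_{\bar\eta}$ attached to $\nu$. As the vectors live in the fixed compact cube $[0,1]^{|\En(V)|^k}$, the sequence is automatically tight and the (Hausdorff) moment problem is determinate, so it is enough to prove convergence of every mixed moment. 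Taking copies $\{\overline\chi_s\}_{s=1}^l$ conditionally independent given $\mathcal{A}_N$ and each distributed as $\overline\chi^{N,n}$, conditional independence factorizes the conditional joint law, whence
\[
    \prod_{s=1}^l p_N\lp{\bar\eta^{(s)}} = \PP\lp{cd_V\lp{\overline\chi_s(t_j)} = \eta_j^{(s)}\ \text{for all } j,s \,\mid\, \mathcal{A}_N},
\]
and taking expectations gives
\[
    \mathbb{E}\lb{\prod_{s=1}^l p_N\lp{\bar\eta^{(s)}}} = \PP\lp{cd_V\lp{\overline\chi_s(t_j)} = \eta_j^{(s)}\ \text{for all } j,s},
\]
which is exactly a finite-dimensional distribution of the $l$-fold annealed process $\lp{cd_V(\overline\chi_s^{N,n})}_{s=1}^l$. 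The identical computation for the limit expresses the corresponding mixed moment of $\lp{p(\bar\eta)}_{\bar\eta}$ as a finite-dimensional distribution of $(\chi_s^n)_{s=1}^l$. The hypothesis of the lemma is precisely that the former converges to the latter, so every mixed moment converges; by the method of moments $\lp{p_N(\bar\eta)}_{\bar\eta} \toL \lp{p(\bar\eta)}_{\bar\eta}$, giving the required quenched finite-dimensional convergence and, via \cite{nfw25_2}, the lemma.

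I expect the only genuinely delicate step to be the reduction from finite-dimensional to full path-space convergence, which I would outsource entirely to the corollary of \cite{nfw25_2} and which rests on the finiteness, hence compactness, of $\En(V)$. Everything else is routine bounded-moment bookkeeping, the one point requiring care being the restriction to continuity times of $\chi^n$ so that finite-dimensional convergence is well posed; this is harmless here because $\Psi$ almost surely places no atom at any fixed time.
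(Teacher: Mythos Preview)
Your proposal is correct and follows essentially the same approach as the paper: invoke the corollary of \cite{nfw25_2} using compactness of $\En(V)$, then use the method of moments on the bounded random cylinder probabilities, identifying the moments with annealed finite-dimensional distributions of the $l$-tuple. The only cosmetic difference is that the paper works one cylinder set at a time and checks ordinary moments $\mathbb{E}[p_N(\bar\eta)^l]$, relying on Corollary~6.9 of \cite{nfw25_2} to say that marginal convergence of each cylinder probability already suffices, whereas you prove joint convergence of the full vector $\lp{p_N(\bar\eta)}_{\bar\eta}$ via mixed moments; both are immediate consequences of the same hypothesis.
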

    \begin{proof}
        As $\En(V)$ is a compact metric space, by Corollary 6.9 of \cite{nfw25_2} it suffices to show that, for any vector $\vec{t} = \lp{t_i}_{i=1}^r \in \R_+^r$ of ascending times and any vector of states $\vec{\xi} = \lp{\xi_i}_{i=1}^r$, that
        \begin{equation}\label{E: cylinder_convergence}
            \PP\lp{cd_V\lp{\overline{\chi}^{N,n}} \in C(\vec{t}, \vec{\xi}) \,\mid\, \mathcal{A}_N} \toL \PP\lp{\chi^n \in C(\vec{t}, \vec{\xi}) \,\mid\, \Psi},
        \end{equation}
        where $C(\vec{t}, \vec{\xi})$ is the cylinder set
        \begin{equation}\label{E: cylinder_set}
            C(\vec{t}, \vec{\xi}):= \{x \in \mathcal{D}\lp{\R_+, \En(V)}: x(t_i) = \xi_i \text{ for all } 1 \leq i \leq r\}.
        \end{equation}

        As $\PP\lp{cd_V\lp{\overline{\chi}^{N,n}} \in C(\vec{t}, \vec{\xi}) \,\mid\, \mathcal{A}_N}$ is a sequence uniformly bounded random variables, the convergence in \eqref{E: cylinder_convergence} follows if
        \begin{equation*}
            \mathbb{E}\lb{\PP\lp{cd_V(\overline{\chi}^{N,n}) \in C(\vec{t}, \vec{\xi}) \,\mid\, \mathcal{A}_N}^l} = \PP\lp{\lp{cd_V\lp{\overline{\chi}_i^{N,n}}}_{i=1}^l \in C(\vec{t}, \vec{\xi})^l}
        \end{equation*}
        converges as $N$ goes to infinity to
        \begin{equation*}
            \mathbb{E}\lb{\PP\lp{\chi^n \in C(\vec{t}, \vec{\xi}) \,\mid\, \Psi }^l} = \PP\lp{\lp{\chi_i^n}_{i=1}^l \in C(\vec{t}, \vec{\xi})^l}
        \end{equation*}
        for any finite $l$.
        This follows immediately from convergence in finite-dimensional distribution of the $\lp{\overline{\chi}_i^{N,n}}_{i=1}^l$ to $\lp{\chi_i^n}_{i=1}^l$.
    \end{proof}

    By Lemma~\ref{L: method_of_moments}, it suffices therefore to show that $\lp{cd_V\lp{\overline{\chi}_{i}^{N,n}}}_{i=1}^l$ converges in finite-dimensional distribution to $\lp{\chi_i^n}_{i = 1}^l$ for all $l \in \N$. To this end, we describe a precise coupling of the $\overline{\chi}_i^{N,n}$ in Section~\ref{S: coupling}. The coupling is such that the conditionally independent realizations are given by random walks on the \textit{same} pedigree, but where the choice of edges traversed by the blocks in one realization are independent to those of another. In biological terms, the $l$ realizations are the gene genealogies of a sample taken from the same individuals but at $l$ unlinked loci. This coupling makes explicit how it is that the lineages backwards in time may overlap on the pedigree, belong to the same individual or not. As the time-scale on which there may be an overlap of lineages is $O_{\PP}(1)$ while the time-scale that any of two lineages of among the $l$ realizations may overlap is $O_{\PP}((c_N^{v_0})^{-1})$, there is a separation of timescales so that we only need to consider jumps of $\lp{\overline{\chi}_i^{N,n}}_{i=1}^l$ in $\En(V)^l$, where none of the lineages in among all of the $l$ realizations belong to the same individual, jump to another state in $\En(V)^l$ after projecting via the complete dispersion map $cd_V$. This is made precise in Section~\ref{SS: negligible_outside}.

    To demonstrate joint convergence of the realizations, we utilize a separation of scales argument. To this end, we fix $\varepsilon > 0$. When we restrict ourselves to small enough offspring distributions and migrations, i.e. when $(\widetilde{\mathcal{V}}(k), m(k)) \in B(\varepsilon)$, the jumps of each of the $\overline{\chi}_i^{N,n}$ are approximately independent. This is made precise by the following lemma. 
    \begin{lemma}\label{L: small_jumps}
        Suppose, as $N \to \infty$, that Assumptions~\ref{A: comparable}, \ref{A: migration_convergence}, and \ref{A: no_total_migrations} hold. Let $R$ be the infinitesimal generator of $l$ joint, independent $(0,\kappa,\mu)$-$n$-coalescents, viewed as an $\En(V)^l \times \En(V)^l$ rate matrix. Let $A_N := e^{c_N^{v_0} R}$. Then there is a continuous function $f:\R_+ \to \R_+$ such that $f(0) = 0$ and
        \begin{equation*}
            \norm{P_{N,n,\varepsilon,l} - A_N}_{\infty} \leq f(\varepsilon) c_N^{v_0} \text{\quad for any $\epsilon > 0$},
        \end{equation*}
        where $P_{N,n,\varepsilon,l}$ is the one-step transition matrix
        \begin{equation*}
            \lp{\PP\lp{\lp{cd_V\lp{\chi_i^{N,n}(1)}}_{i=1}^l = \vec{\eta} \,\mid\, \lp{\chi_i^{N,n}(0)}_{i=1}^l = \vec{\xi}, (\widetilde{\mathcal{V}}(0), m(0)) \in B(\varepsilon)}}_{\vec{\xi}, \vec{\eta} \in \En(V)^l}.
        \end{equation*}
    \end{lemma}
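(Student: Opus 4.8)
The plan is to show that $P_{N,n,\varepsilon,l}$ and $A_N$ each differ from the identity by a matrix of size $O(c_N^{v_0})$, and that these first-order parts agree up to a discrepancy controlled by $\varepsilon$. Two structural reductions set this up. First, since $R$ generates $l$ \emph{independent} single-copy coalescents, it is a sum of commuting one-copy generators on $\En(V)^{\otimes l}$, so $A_N = e^{c_N^{v_0}R}$ factorizes as $A_N(\vec\xi,\vec\eta) = \prod_{i=1}^l B_N(\xi_i,\eta_i)$, with $B_N := e^{c_N^{v_0}(K_n(\kappa)+M_n(\mu))}$ the single-copy limit. Second, because the $l$ realizations use independent parent-choices and Mendel coins on one shared pedigree, they are conditionally independent given $\mathcal{A}_N$, and averaging the one-step kernel over the conditioned generation gives
\[
P_{N,n,\varepsilon,l}(\vec\xi,\vec\eta) \;=\; \mathbb{E}\!\lb{\,\prod_{i=1}^l p_{\mathcal{G}}(\xi_i,\eta_i)\;\Big|\;(\widetilde{\mathcal{V}}(0),m(0))\in B(\varepsilon)},
\]
where $p_{\mathcal{G}}$ denotes the single-copy one-step transition kernel. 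Throughout I work in the regime, justified by the separation-of-timescales reduction of Section~\ref{SS: negligible_outside}, in which distinct blocks across all $l$ copies sit at distinct individuals of the pedigree, so that the only source of cross-copy dependence is the shared small families and migrant sets of the current generation.

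I would then expand everything to first order in $c_N^{v_0}$. Taylor's theorem and finiteness of $\En(V)^l$ give $\norm{A_N - I - c_N^{v_0}R}_\infty = O((c_N^{v_0})^2)$ uniformly in $N$. For the single-copy kernel I would compute the conditional mean $\bar p_\varepsilon(\xi,\eta) := \mathbb{E}[p_{\mathcal{G}}(\xi,\eta)\mid B(\varepsilon)]$. Only two transition types are visible at order $c_N^{v_0}$: a single backward migration of one block along an edge $e$, and a single pairwise coalescence of two blocks inside a deme $v$. Using Assumption~\ref{A: migration_convergence}, the vague convergence $\tfrac{1}{c_N^{v_0}}\Phi_N\to\Phi$ of Assumption~\ref{A: rarity}, and $\PP(B(\varepsilon)) = 1-\Phi_N(B(\varepsilon)^c)\to 1$, the scaled migration probability converges to $\mu_e^{(\varepsilon)} := \bar\mu_e - \int_{B(\varepsilon)^c}m_e\,d\Phi$, while the scaled pair-coalescence probability (the very quantity defining $c_N^v$, cf.\ Assumption~\ref{A: comparable}) converges to $\kappa_v^{(\varepsilon)} := c(v) - \tfrac12\int_{B(\varepsilon)^c}\qv{x^v,x^v}\,d\Phi$. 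By the local integrability of Remark~\ref{R: integrability_condition} and dominated convergence, $|\mu_e^{(\varepsilon)}-\mu_e| = \int_{B(\varepsilon)}m_e\,d\Phi\to 0$ and $|\kappa_v^{(\varepsilon)}-\kappa_v| = \tfrac12\int_{B(\varepsilon)}\qv{x^v,x^v}\,d\Phi\to 0$ as $\varepsilon\to 0$. Hence $\bar p_\varepsilon = I + c_N^{v_0}(K_n(\kappa)+M_n(\mu)) + c_N^{v_0}E_\varepsilon + O((c_N^{v_0})^2)$ with $\norm{E_\varepsilon}_\infty\to 0$ as $\varepsilon\to 0$, where I also use that two or more events within a single copy occur with total probability $O((c_N^{v_0})^2) + O(\varepsilon\,c_N^{v_0})$ by second-moment bounds on offspring and migrant counts restricted to $B(\varepsilon)$, which is absorbed into $E_\varepsilon$ and the remainder.

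The remaining task is to replace $\mathbb{E}[\prod_i p_{\mathcal{G}}]$ by $\prod_i\bar p_\varepsilon$ and then invoke the factorization of $A_N$. Writing $p_{\mathcal{G}}(\xi_i,\eta_i) = \bar p_\varepsilon(\xi_i,\eta_i) + \delta_i$ with $\mathbb{E}[\delta_i\mid B(\varepsilon)]=0$, the product equals $\prod_i\bar p_\varepsilon(\xi_i,\eta_i)$ plus a sum of mixed centered moments $\mathbb{E}[\prod_{i\in S}\delta_i\mid B(\varepsilon)]$ over index sets $|S|\ge 2$. Each such moment is a covariance of single-copy kernels that share only the current generation's families and migrant sets; in a $B(\varepsilon)$-generation every family has size at most $\varepsilon\cdot 2N^*(v)$ and every migrant set size at most $\varepsilon N(v)$, so the probability that distinct lineages of two copies are simultaneously affected by the same small family or migrant set is bounded by $\varepsilon$ times a single per-copy jump probability, i.e.\ $O(\varepsilon\,c_N^{v_0})$, while coalescences that do not share a family factorize up to $O((c_N^{v_0})^2)$ through the independence of the coins. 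Thus every mixed moment is $\le c_N^{v_0}\,g(\varepsilon) + O((c_N^{v_0})^2)$ with $g(\varepsilon)\to 0$. Assembling the three estimates entrywise gives
\[
\big|P_{N,n,\varepsilon,l}(\vec\xi,\vec\eta)-A_N(\vec\xi,\vec\eta)\big| \;\le\; c_N^{v_0}\lp{\sum_{e\in E}|\mu_e^{(\varepsilon)}-\mu_e| + \sum_{v\in V}|\kappa_v^{(\varepsilon)}-\kappa_v| + g(\varepsilon)} + O((c_N^{v_0})^2),
\]
and since $\En(V)^l$ is finite the same bound holds for $\norm{\cdot}_\infty$ after adjusting constants. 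Taking $f(\varepsilon)$ to be the ($N$-independent) bracket---continuous and vanishing at $0$ by the dominated-convergence statements above---and absorbing the residual $O((c_N^{v_0})^2)$ for $N$ large yields the lemma.

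The hard part will be the cross-copy correlation control in the third paragraph, and specifically the migration contribution, because backward migration is \emph{determined by the shared pedigree} rather than decided by the independent coins. One must rule out that the pedigree-induced dependence between the migration decisions of distinct copies survives at order $c_N^{v_0}$; this is exactly where the restriction to $B(\varepsilon)$---which caps every migrant fraction below $\varepsilon$---together with the distinct-individuals regime of Section~\ref{SS: negligible_outside} and a second-moment bound on migrant counts forces the joint effect down to $O(\varepsilon\,c_N^{v_0})$. By contrast, the within-copy double-event estimate and the single-copy rate computations are comparatively routine.
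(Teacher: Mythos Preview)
Your approach is correct and takes a genuinely different route from the paper. You exploit the conditional independence of the $l$ copies given $\mathcal{A}_N$ to write $P_{N,n,\varepsilon,l}(\vec\xi,\vec\eta)=\mathbb{E}\bigl[\prod_i p_{\mathcal{G}}^{(i)}(\xi_i,\eta_i)\,\big|\,B(\varepsilon)\bigr]$, pair this with the product factorization $A_N(\vec\xi,\vec\eta)=\prod_i B_N(\xi_i,\eta_i)$, and then expand in centered fluctuations $\delta_i$ to reduce the problem to controlling cross-copy mixed moments. The paper instead works directly on the $l$-tuple without ever factoring: it defines a single ``bad event'' $\mathcal{B}_{N,\varepsilon}(\vec\xi)$ that at least two elementary changes occur among all $l$ coordinates (a triple merger, two binary coalescences, two migrations, or a migration together with a coalescence, possibly across different copies), bounds $\PP(\mathcal{B}_{N,\varepsilon})\le f_0(\varepsilon)c_N^{v_0}+O((c_N^{v_0})^2)$ using the $B(\varepsilon)$ restriction, and on $\mathcal{B}_{N,\varepsilon}^c$ identifies the one-step kernel with $I+c_N^{v_0}R$. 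The two arguments meet at the same technical core: your mixed-moment bounds $\mathbb{E}[\prod_{i\in S}\delta_i]$ for $|S|\ge 2$ are precisely the cross-copy two-event probabilities populating the paper's bad event, and both rest on the observation that on $B(\varepsilon)$ every family frequency and every migrant fraction is at most $\varepsilon$, so any pair of events has probability at most $\varepsilon$ times a single-event probability. Your route makes the product structure of $A_N$ and the conditional-independence mechanism explicit, at the cost of tracking covariances of the quenched one-step kernels; the paper's route sidesteps that bookkeeping by collapsing everything into a single union bound over elementary moves. One small remark: you invoke Assumption~\ref{A: rarity} (and implicitly Assumption~\ref{A: integrability}) to pass from the $\varepsilon$-conditioned rates $\mu_e^{(\varepsilon)},\kappa_v^{(\varepsilon)}$ to $\mu_e,\kappa_v$, whereas the lemma statement lists only Assumptions~\ref{A: comparable}, \ref{A: migration_convergence}, \ref{A: no_total_migrations}. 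The paper's own proof needs the same assumptions at the same step (the very definition of $\kappa,\mu$ involves $\Phi$), so this is a feature of the statement rather than a defect of your argument; the lemma is only ever applied under the full hypotheses of Theorem~\ref{T: quenched}.
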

    Lemma~\ref{L: small_jumps} is proven in Section~\ref{SS: small_jumps}.

    At the same time, when we restrict ourselves to the rare jumps where $(\widetilde{\mathcal{V}}(k), m(k)) \in B(\varepsilon)^c$ we have that the transitions of the conditionally independent realizations are really asymptotically identical. 
    \begin{lemma}\label{L: big_jumps}
        Suppose, as $N \to \infty$, that Assumptions~\ref{A: continuous}, \ref{A: comparable}, and \ref{A: rarity} hold. Define the transition probability matrix $H_{N,l}: \Delta^V \times [0,1]^E \to [0,1]^{\En(V)^l \times \En(V)^l}$ by
        \begin{equation*}
            H_{N,l}(x,m) := \lp{\PP\lp{\lp{cd_V\lp{\overline{\chi}_{i}^{N,n}(k+1)}}_{i=1}^l = \vec{\eta} \,\mid\, \lp{\overline{\chi}_{i}^{N,n}(k)}_{i=1}^l = \vec{\xi}, (\widetilde{\mathcal{V}}(k), m(k)) = (x,m)}}_{\vec{\xi}, \vec{\eta} \in \En(V)^l}.
        \end{equation*}
        Define the transition matrix $H_l: \Delta^V \times [0,1]^E \to [0,1]^{\En(V)^l \times \En(V)^l}$ by
        \begin{equation*}
            H_l(x,m) := \lp{\prod_{i=1}^l q_n(x,m)(\xi_i, \eta_i)}_{\vec{\xi}, \vec{\eta} \in \En(V)^l}.
        \end{equation*}
        Then for every $\varepsilon>0$,
        \begin{equation*}
            \frac{1}{c_N^{v_0}}
            \int_{B(\varepsilon)^c}
                \la{H_{N,l}(x,m)-H_l\circ h_V(x,m)}_\infty \, d\Phi_N(x,m)
            \to 0
        \end{equation*}
        as $N$ goes to infinity.
    \end{lemma}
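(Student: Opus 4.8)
The plan is to reduce the $l$-copy estimate to a single-copy one-generation computation by exploiting the conditional independence of the coupled copies, and then to integrate a uniform single-generation error bound against $\tfrac{1}{c_N^{v_0}}\Phi_N$ over the macroscopic region $B(\varepsilon)^c$. First I would condition on the entire generation-$k$ reproduction-and-migration data (the realized offspring matrix together with the migrant assignment), which I denote by $\mathcal F$. In the coupling of Section~\ref{S: coupling} the $l$ copies are random walks on the same pedigree whose edge choices are independent across copies, so the within-copy coalescence of copy $i$ is a measurable function of $\mathcal F$ and of copy $i$'s own choices alone. Hence, conditional on $\mathcal F$, the projected one-step transitions $cd_V$ of the copies are independent across $i$. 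Writing $P_{N,1}^{\mathcal F}$ for the (random, $\mathcal F$-measurable) single-copy one-step transition matrix on $\En(V)$, this yields the factorization
\begin{equation*}
    \lp{H_{N,l}(x,m)}_{\vec\xi,\vec\eta} = \mathbb E\lb{\prod_{i=1}^l \lp{P_{N,1}^{\mathcal F}}_{\xi_i,\eta_i} \,\Big|\, (\widetilde{\mathcal V}(k), m(k)) = (x,m)}.
\end{equation*}

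Next I would establish the single-copy limit: for $(x,m)\in B(\varepsilon)^c$, conditional on $(\widetilde{\mathcal V}(k),m(k))=(x,m)$, one has $P_{N,1}^{\mathcal F}\to q_n(h_V(x,m))$ entrywise in $L^1$ as $N\to\infty$. This is the diploid, structured one-generation analogue of the computation behind \cite[Equation~1.6]{birkner2018}. Tracing one lineage backwards, it first undoes reproduction --- choosing a parental individual in its current deme with probability proportional to that individual's offspring count, then choosing one of that individual's two gene copies uniformly by Mendelian segregation --- and then undoes migration. The Mendelian step is exactly what produces the halving map $h$: a parent of offspring frequency $x_i^v$ contributes two independent gene copies each of effective frequency $\tfrac12 x_i^v$, so that the probability two lineages in deme $v$ coalesce is $\tfrac12\qv{x^v,x^v}=\sum_i 2(\tfrac12 x_i^v)^2$, matching the paintbox run with the halved frequencies $h(x^v)$. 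The migration step contributes the factors $m_{(v,w)}$ and $1-\sum_{w}m_{(v,w)}$ appearing in $q_n$. For a fixed, bounded number $n$ of lineages the exact transition probability is a symmetric function of $\mathcal F$ determined, up to combinatorial corrections of order $O(1/N)$ coming from sampling parents and migrants without replacement and from the floor functions, by the ordered frequencies $\widetilde{\mathcal V}$ and the proportions $m$ alone. Consequently $P_{N,1}^{\mathcal F}$ is, given $(x,m)$, deterministic up to an $o(1)$ fluctuation and converges to $q_n(h_V(x,m))$.

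Since all entries lie in $[0,1]$ and the limit $q_n(h_V(x,m))$ is deterministic given $(x,m)$, the elementary bound $\la{\prod_i a_i-\prod_i b_i}\le \sum_i \la{a_i-b_i}$ for $a_i,b_i\in[0,1]$ turns the factorization into
\begin{equation*}
    \la{H_{N,l}(x,m)-H_l\circ h_V(x,m)}_\infty \le l\cdot \mathbb E\lb{\la{P_{N,1}^{\mathcal F}-q_n(h_V(x,m))}_\infty \,\Big|\,(\widetilde{\mathcal V}(k),m(k))=(x,m)} =: l\,\rho_N(x,m).
\end{equation*}
I would then upgrade the pointwise single-copy convergence to a bound $\rho_N(\varepsilon):=\sup_{(x,m)\in B(\varepsilon)^c}\rho_N(x,m)\to 0$ that is uniform over the macroscopic region; the uniformity is genuine because on $B(\varepsilon)^c$ there is always a macroscopic event --- either $\qv{x^v,x^v}$ is bounded below for some $v$, forcing a macroscopic largest family $x_1^v$, or $\norm{m}_\infty$ is bounded below --- so the $O(1/N)$ corrections are uniformly controlled. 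Finally, integrating and using that $B(\varepsilon)^c$ is compact and bounded away from $\mathbf 0_{V,E}$, Assumption~\ref{A: rarity} together with the finiteness $\Phi(B(\varepsilon)^c)<\infty$ (which follows from the integrability of Remark~\ref{R: integrability_condition}) shows that $\tfrac{1}{c_N^{v_0}}\Phi_N(B(\varepsilon)^c)$ remains bounded in $N$, whence
\begin{equation*}
    \frac{1}{c_N^{v_0}}\int_{B(\varepsilon)^c}\la{H_{N,l}(x,m)-H_l\circ h_V(x,m)}_\infty\,d\Phi_N(x,m) \le l\,\rho_N(\varepsilon)\cdot \frac{1}{c_N^{v_0}}\Phi_N(B(\varepsilon)^c)\longrightarrow 0.
\end{equation*}

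The main obstacle I expect is the uniform single-generation estimate $\rho_N(\varepsilon)\to 0$, and in particular its migration-dominated part: when a macroscopic fraction of a deme migrates, the backward migration probabilities and their interaction with the subsequent coalescence must still converge to the paintbox values uniformly, which is exactly where Assumption~\ref{A: no_total_migrations} is used to keep the post-migration deme compositions nondegenerate (bounded away from the case where a deme is entirely replaced by immigrants). The bookkeeping of simultaneous large families and large migration within a single generation, together with the without-replacement and Mendelian-halving corrections, is the technically heaviest piece; once it is in place the factorization and the integration are routine.
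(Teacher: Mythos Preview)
Your overall strategy matches the paper's: factorize the $l$-copy kernel into single-copy pieces, show each single-copy piece is close to the diploid paintbox $q_n\circ h_V$, use $\la{\prod a_i-\prod b_i}\le\sum\la{a_i-b_i}$, and then integrate against $\tfrac{1}{c_N^{v_0}}\Phi_N$ using that $\tfrac{1}{c_N^{v_0}}\Phi_N(B(\varepsilon)^c)$ stays bounded by Assumption~\ref{A: rarity}. Your conditioning on the full one-generation data $\mathcal F$ is a clean way to get exact factorization; the paper instead writes $H_{N,l}\approx\prod_i q_{N,n}$ and then $q_{N,n}\approx q_n\circ h_V$ as two separate $O(1/N)$ steps, but these are equivalent reorganizations.

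The gap is in your uniformity claim $\rho_N(\varepsilon):=\sup_{(x,m)\in B(\varepsilon)^c}\rho_N(x,m)\to 0$. The paper does \emph{not} prove this and its argument would not give it: the without-replacement and floor corrections are $O(C/N)$ only on the set $G(\delta)=\{(x,m):\sum_{w}m_{(v,w)}\le 1-\delta\ \forall v\}$, with $C$ depending on $\delta$, because the relevant sampling pools have size at least $\delta N(v)$ there. On $B(\varepsilon)^c\setminus G(\delta)$ the constant blows up as $\delta\downarrow 0$, so a sup over all of $B(\varepsilon)^c$ is not controlled by this mechanism. Your sentence ``the uniformity is genuine because on $B(\varepsilon)^c$ there is always a macroscopic event'' does not address this: the problem is not the absence of a macroscopic event but the \emph{degeneracy} of the sampling when a deme is nearly emptied by migration. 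Assumption~\ref{A: no_total_migrations} does not help you bound $\rho_N$ pointwise; it is a statement about $\Phi$, not about individual $(x,m)$.

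What the paper actually does with Assumption~\ref{A: no_total_migrations} is a truncation: choose $\delta$ so that $\Phi(B(\varepsilon)^c\cap G(\delta)^c)\le\varepsilon$, get the uniform $C''/N$ bound on $B(\varepsilon)^c\cap G(\delta)$, and on the bad set $B(\varepsilon)^c\cap G(\delta)^c$ use the trivial bound $\norm{H_{N,l}-H_l\circ h_V}_\infty\le 2$ together with $\limsup_N\tfrac{1}{c_N^{v_0}}\Phi_N(B(\varepsilon)^c\cap G(\delta)^c)\le\Phi(B(\varepsilon)^c\cap G(\delta)^c)\le\varepsilon$. Sending $\varepsilon\downarrow 0$ finishes. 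You should replace your uniform-sup step with this split; once you do, the rest of your argument goes through.
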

    Lemma~\ref{L: big_jumps} is proven in Section~\ref{SS: big_jumps}.
    \begin{remark}
        Here we use a truncation argument to bound the total proportion of migrants from any deme away from $1$ using Assumption~\ref{A: no_total_migrations}. We believe this truncation argument may be improced to remove this assumption, though this is not undertaken in the present work.
    \end{remark}

    Combining the two scales of Lemma~\ref{L: small_jumps} and Lemma~\ref{L: big_jumps} allows us to conclude that the transition kernel of the joint process $\lp{\overline{\chi}_i^{N,n}}_{i=1}^m$ is arbitrarily close to that of a Markov process with infinitesimal generator
    \begin{equation}\label{E: L_defn}
        \mathcal{L} := R + \int (H_l\circ h_V(x,m)-I)d\Phi(x,m),    
    \end{equation}
    which is the meaning of the following lemma.
    \begin{lemma}\label{L: combining_scales}
        Suppose, as $N \to \infty$, that Assumptions~\ref{A: continuous}, \ref{A: comparable}, \ref{A: rarity}, \ref{A: migration_convergence}, and \ref{A: no_total_migrations} hold. Let 
        \begin{equation*}
            P_{N,l}(t) := \lp{\PP\lp{\lp{cd_V\lp{\overline{\chi}_i^{N,n}}}_{i=1}^l = \vec{\eta} \,\mid\, \lp{cd_V\lp{\overline{\chi}_i^{N,n}}(0)}_{i=1}^l = \vec{\xi}}}_{\vec{\xi}, \vec{\eta} \in \En(V)^l}
        \end{equation*}
        denote the transition kernel of $\lp{cd_V\lp{\overline{\chi}_i^{N,n}}}_{i=1}^l$ at time $t$. Then, for any fixed $T > 0$, there is a continuous function $g_T:\R_+\to\R_+$ with $g(0) = 0$ such that
        \begin{equation}\label{E: kernel_approximation}
            \sup_{0 \leq t \leq T} \norm{P_{N,l}(t) - \exp\lp{t \mathcal{L}}}_{\infty} \leq g_T(\varepsilon) \text{ for all } \epsilon > 0
        \end{equation}
    \end{lemma}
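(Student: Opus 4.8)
The plan is to run the standard discrete-to-continuous comparison: identify the one-step annealed transition matrix of the joint process as an $I + c_N^{v_0}\mathcal{L}_\varepsilon + o(c_N^{v_0})$ perturbation of a \emph{truncated} generator $\mathcal{L}_\varepsilon$, iterate it $\lfloor t/c_N^{v_0}\rfloor$ times using the contraction property of stochastic matrices, and then remove the truncation as $\varepsilon \to 0$ via Assumption~\ref{A: integrability}. Write $\Pi_N := \int H_{N,l}(x,m)\,d\Phi_N(x,m)$ for the one-step transition matrix of $\lp{cd_V(\overline{\chi}_i^{N,n})}_{i=1}^l$, so that $P_{N,l}(t) = \Pi_N^{\lfloor t/c_N^{v_0}\rfloor}$, and split it along the two scales,
\begin{equation*}
\Pi_N = \Phi_N(B(\varepsilon))\,P_{N,n,\varepsilon,l} + \int_{B(\varepsilon)^c} H_{N,l}(x,m)\,d\Phi_N(x,m).
\end{equation*}
I work in the operator norm induced by $\ell^\infty$, for which stochastic matrices have norm one; as all matrices have the fixed dimension $\la{\En(V)}^l$, this is equivalent to $\norm{\cdot}_\infty$ up to a dimensional constant $C$ that I absorb silently.

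For the small-jump part, Lemma~\ref{L: small_jumps} gives $P_{N,n,\varepsilon,l} = A_N + O(f(\varepsilon)c_N^{v_0})$, and expanding $A_N = \exp(c_N^{v_0}R) = I + c_N^{v_0}R + O((c_N^{v_0})^2)$. Choosing $\varepsilon$ so that $\partial B(\varepsilon)$ is $\Phi$-null (all but countably many radii), Assumption~\ref{A: rarity} yields $\Phi_N(B(\varepsilon)^c) = c_N^{v_0}\Phi(B(\varepsilon)^c) + o(c_N^{v_0})$ and hence $\Phi_N(B(\varepsilon)) = 1 - O(c_N^{v_0})$, so the small-jump term equals $I + c_N^{v_0}R - \Phi_N(B(\varepsilon)^c)I + O(f(\varepsilon)c_N^{v_0}) + o(c_N^{v_0})$. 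For the big-jump part, Lemma~\ref{L: big_jumps} replaces $H_{N,l}$ by $H_l\circ h_V$ at cost $o(c_N^{v_0})$, and vague convergence (Assumption~\ref{A: rarity}), legitimate because $H_l\circ h_V$ is bounded and continuous on the compact set $B(\varepsilon)^c$, gives $\int_{B(\varepsilon)^c} H_l\circ h_V\,d\Phi_N = c_N^{v_0}\int_{B(\varepsilon)^c} H_l\circ h_V\,d\Phi + o(c_N^{v_0})$. Combining the scales and using $-\Phi(B(\varepsilon)^c)I + \int_{B(\varepsilon)^c} H_l\circ h_V\,d\Phi = \int_{B(\varepsilon)^c}(H_l\circ h_V - I)\,d\Phi$ produces
\begin{equation*}
\Pi_N = I + c_N^{v_0}\,\mathcal{L}_\varepsilon + E_N,\qquad \mathcal{L}_\varepsilon := R + \int_{B(\varepsilon)^c}(H_l\circ h_V - I)\,d\Phi,
\end{equation*}
with $\norm{E_N} \leq C f(\varepsilon)c_N^{v_0} + \rho_N(\varepsilon)$ where $\rho_N(\varepsilon) = o(c_N^{v_0})$ for each fixed $\varepsilon$.

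Since $R$ is conservative and each $H_l\circ h_V$ is stochastic, $\mathcal{L}_\varepsilon$ has nonnegative off-diagonal entries and zero row sums, i.e. it is a genuine finite-rate generator (finite because $\Phi(B(\varepsilon)^c)<\infty$), so $\widetilde{A}_N := \exp(c_N^{v_0}\mathcal{L}_\varepsilon) = I + c_N^{v_0}\mathcal{L}_\varepsilon + O((c_N^{v_0})^2)$ is stochastic. As $\Pi_N$ and $\widetilde{A}_N$ both have norm one, the telescoping bound $\norm{\Pi_N^k - \widetilde{A}_N^k} \leq k\,\norm{\Pi_N - \widetilde{A}_N}$ with $k = \lfloor t/c_N^{v_0}\rfloor \leq T/c_N^{v_0}$ gives
\begin{equation*}
\sup_{0\leq t\leq T}\norm{P_{N,l}(t) - \exp\lp{t\mathcal{L}_\varepsilon}} \leq \frac{T}{c_N^{v_0}}\lp{C f(\varepsilon)c_N^{v_0} + \rho_N(\varepsilon) + O((c_N^{v_0})^2)} + O(c_N^{v_0}) = CTf(\varepsilon) + o_N(1),
\end{equation*}
where the extra $O(c_N^{v_0})$ accounts for replacing $\lfloor t/c_N^{v_0}\rfloor c_N^{v_0}$ by $t$ inside the exponential.

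It remains to remove the truncation. Here $\mathcal{L} - \mathcal{L}_\varepsilon = \int_{B(\varepsilon)}(H_l\circ h_V - I)\,d\Phi$, whose operator norm is at most $2\sup_{\vec{\xi}}\int_{B(\varepsilon)}\lp{1 - \prod_{i=1}^l q_n(h_V(x,m))(\xi_i,\xi_i)}\,d\Phi =: \omega(\varepsilon)$. Using $1 - \prod_i a_i \leq \sum_i(1-a_i)$ for $a_i\in[0,1]$ together with the fact that $(h_V)_*\Phi$ satisfies Assumption~\ref{A: integrability}, each summand $\int_{B(\varepsilon)}(1 - q_n(h_V(x,m))(\xi_i,\xi_i))\,d\Phi \to 0$ as $\varepsilon\to0$ by continuity from above of the finite measure $(1 - q_n(h_V)(\xi,\xi))\,d\Phi$ along $B(\varepsilon)\downarrow\{\mathbf{0}_{V,E}\}$, where the integrand vanishes; hence $\omega(\varepsilon)\to0$. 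Since $\mathcal{L}$ and $\mathcal{L}_\varepsilon$ both generate stochastic semigroups, Duhamel's identity yields $\norm{\exp(t\mathcal{L}) - \exp(t\mathcal{L}_\varepsilon)} \leq t\,\norm{\mathcal{L} - \mathcal{L}_\varepsilon} \leq T\omega(\varepsilon)$ for $t\leq T$. Setting $g_T(\varepsilon) := CTf(\varepsilon) + T\omega(\varepsilon)$, which is continuous with $g_T(0)=0$, the triangle inequality gives $\limsup_{N\to\infty}\sup_{0\leq t\leq T}\norm{P_{N,l}(t) - \exp(t\mathcal{L})} \leq g_T(\varepsilon)$, which is the asserted bound (and, letting $\varepsilon\to0$, the uniform convergence the proof of Theorem~\ref{T: quenched} ultimately needs). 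The main obstacle is the order bookkeeping: because the one-step kernel is raised to the power $\lfloor t/c_N^{v_0}\rfloor \asymp (c_N^{v_0})^{-1}$, the small-jump error must be exactly $O(c_N^{v_0})$ with an $\varepsilon$-controlled constant (Lemma~\ref{L: small_jumps}) while the big-jump error must be genuinely $o(c_N^{v_0})$ (Lemma~\ref{L: big_jumps}), so that after amplification the former survives only as the benign term $CTf(\varepsilon)$ and the latter vanishes in $N$; keeping the $\varepsilon$-dependent and $N$-dependent errors cleanly separated, and verifying that $\mathcal{L}_\varepsilon$ is a bona fide stochastic generator so the contraction estimates apply, is the crux.
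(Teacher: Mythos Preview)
Your argument follows the paper's almost line for line---split along the two scales, apply Lemmas~\ref{L: small_jumps} and~\ref{L: big_jumps}, telescope, then remove the $\varepsilon$-truncation---but there is one genuine gap at the very first step. You assert that $P_{N,l}(t)=\Pi_N^{\lfloor t/c_N^{v_0}\rfloor}$ with $\Pi_N=\int H_{N,l}\,d\Phi_N$. This identity presumes that $\lp{cd_V(\overline{\chi}_i^{N,n})}_{i=1}^l$ is a time-homogeneous Markov chain on $\En(V)^l$, which it is not. The coupled process constructed in Section~\ref{SS: coupling} lives on the richer space $\mathcal{H}_{n,l}(V)$: after one step started from a completely dispersed state $\vec{\xi}\in\En(V)^l$, different copies may share ancestral individuals (and each copy may have two blocks paired in the same diploid), so the state lands in $\mathcal{H}_{n,l}(V)\setminus\mathsf{D}_{n,l}(V)$, and the next step's transition then depends on that richer configuration, not merely on its $cd_V$-projection. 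In short, $H_{N,l}$ and $P_{N,n,\varepsilon,l}$ are \emph{one}-step kernels from dispersed states, but their powers do not compute the multi-step law of the projection.

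The paper handles this by an explicit separation-of-timescales reduction (Section~\ref{SS: negligible_outside}): it defines the return times to the dispersed set $\mathsf{D}_{n,l}(V)$, shows via Lemmas~\ref{L: geometric_return_D}--\ref{L: reduction_trace} that excursions outside $\mathsf{D}_{n,l}(V)$ last $O_\PP(1)$ discrete steps and hence have rescaled duration $o_N(1)$, and then works with the \emph{embedded} chain $Y^{N,l}$ at successive returns, which \emph{is} Markov on $\En(V)^l$ with one-step matrix $\mathbf{P}_{N,l}$ admitting exactly your decomposition~\eqref{E: one_step_decomp_new}. The first paragraphs of the paper's proof invoke Lemmas~\ref{L: outside_time_negligible} and~\ref{L: reduction_trace} to transfer the estimate for $\mathbf{P}_{N,l}^{\lfloor t/h_N\rfloor}$ to $P_{N,l}(t)$. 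Once you insert this reduction, the remainder of your argument is correct and essentially identical to the paper's; your use of Duhamel for the final $\|e^{t\mathcal{L}}-e^{t\mathcal{L}_\varepsilon}\|$ bound is a clean alternative to the paper's Lipschitz estimate on the matrix exponential.
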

    Lemma~\ref{L: combining_scales} is proven in Section~\ref{SS: combining_scales}.

    Finally, we characterize the annealed joint law of $l$ realizations $\lp{\chi_i^n}_{i=1}^l$ of a $\Psi$-driven $n$-$(\kappa,\mu)$-coalescent $\chi^n$ that are conditionally independent with respect to $\Psi$. We show that $\lp{\chi_i^n}_{i=1}^l$ has infinitesimal generator $\mathcal{L}$.
    \begin{lemma}\label{L: driven_coupling}
        Suppose $\Phi$ satisfies the regularity condition of Assumption~\ref{A: integrability}. Then $\lp{\chi_i^n}_{i=1}^l$, as a $\En(V)^l$-valued Markov process, has infinitesimal generator $\mathcal{L}$.
    \end{lemma}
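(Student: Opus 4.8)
The plan is to show that $l$ conditionally-independent $\Psi$-driven coalescents, viewed jointly as an $\En(V)^l$-valued process, form a time-inhomogeneous Markov process whose generator averages, over $\Psi$, to the claimed $\mathcal{L}$. First I would recall that conditional on $\Psi$ each coordinate $\chi_i^n$ evolves independently: between atoms of $\Psi$ each $\chi_i^n$ runs as an independent $(0,\kappa,\mu)$-$n$-coalescent, and at each atom $(t,x,m)$ of $\Psi$ each coordinate performs an $(x,m,\chi_i^n(t-))$-merger \emph{using the same} $(x,m)$ but with independent paintbox randomness. The continuous evolution between atoms is therefore generated by $R$, the generator of $l$ independent $(0,\kappa,\mu)$-$n$-coalescents. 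The content of the lemma is that, after averaging out $\Psi$, the contribution of the atoms is exactly $\int (H_l\circ h_V(x,m)-I)\,d\Phi(x,m)$, where the composition with $h_V$ accounts for diploidy (each offspring-frequency interval is split in two, as in the definition of the limiting intensity $dt\otimes (h_V)_*\Phi$).

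The key steps, in order, are as follows. First I would invoke the pathwise construction of Section~\ref{SS: driven_coupling} to justify that the annealed process is well-defined as a genuine Markov process even when $\Phi$ has infinitely many atoms; the regularity condition of Assumption~\ref{A: integrability} is precisely what guarantees that the rate of state-changing jumps is locally finite. Second, for a test function $F$ on $\En(V)^l$, I would write the generator as the sum of the between-atom part and the jump part. The between-atom part yields $(RF)(\vec\xi)$ by the independence of the coordinates and the product structure of $R$. Third, for the jump part I would compute, using the Poisson intensity $dt\otimes(h_V)_*\Phi$, the instantaneous rate of transitioning from $\vec\xi$ to $\vec\eta$: an atom at $(x,m)$ in the pushed-forward intensity sends each coordinate $\xi_i$ independently to $\eta_i$ with probability $q_n(x,m)(\xi_i,\eta_i)$, so the joint transition probability at such an atom is $\prod_{i=1}^l q_n(x,m)(\xi_i,\eta_i)$, which is exactly the $(\vec\xi,\vec\eta)$ entry of $H_l(x,m)$. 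Integrating against the intensity and subtracting the identity to account for the diagonal (no-change) contribution gives the jump generator $\int (H_l(x,m)-I)\,d((h_V)_*\Phi)(x,m)$. Fourth, I would rewrite this using the change-of-variables identity $\int H_l(x,m)\,d((h_V)_*\Phi) = \int H_l\circ h_V(x,m)\,d\Phi$, which is just the definition of the pushforward measure, yielding the jump part of $\mathcal{L}$ as written in \eqref{E: L_defn}.

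Putting the two parts together identifies the full generator as $\mathcal{L}=R+\int(H_l\circ h_V(x,m)-I)\,d\Phi(x,m)$, as claimed. The main obstacle I anticipate is the rigorous justification of the jump-generator computation when $\Phi$ is only $\sigma$-finite: one must verify that the integral $\int(H_l\circ h_V(x,m)-I)\,d\Phi(x,m)$ converges as a bounded operator on the (finite-dimensional) space of functions on $\En(V)^l$, and that this integrated generator genuinely describes the infinitesimal behavior of the process rather than merely the behavior restricted to atoms outside a neighborhood of $\mathbf{0}_{V,E}$. Here I would control the near-$\mathbf{0}_{V,E}$ contribution by noting that $I-H_l\circ h_V(x,m)$ has operator norm bounded by a constant times $\sum_{i}(1-q_n(h_V(x,m))(\xi_i,\xi_i))$, so Assumption~\ref{A: integrability} (equivalently the moment bounds of Remark~\ref{R: integrability_condition}, which are stable under $h_V$ since halving only scales the second moment $\langle x,x\rangle$ by a factor of $1/2$) makes the integral absolutely convergent. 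With integrability in hand the generator is bounded and the martingale characterization is routine; the only genuinely delicate point is interchanging the sum over atoms with the $\sigma$-finite integral, which the pathwise construction of Section~\ref{SS: driven_coupling} is designed to license.
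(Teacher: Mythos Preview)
Your proposal is correct and follows essentially the same argument as the paper: identify the between-atom generator as $R$ by independence of the coordinates, identify the jump kernel at an atom $(x,m)$ as the product $H_l(x,m)$ by conditional independence of the paintbox randomness, and anneal over the Poisson intensity to obtain $\mathcal{L}$. Your explicit change-of-variables step for the pushforward $(h_V)_*\Phi$ and your integrability check via Assumption~\ref{A: integrability} are in fact slightly more careful than the paper's own write-up, which uses $\Phi$ generically for the intensity of $\Psi$ in that section and leaves the halving-map bookkeeping implicit.
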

    Lemma~\ref{L: driven_coupling} is proven in Section~\ref{SS: driven_coupling}.

    With the above lemmas, we are now able to present the proof of Theorem~\ref{T: quenched}.
    \begin{proof}
        By Lemma~\ref{L: method_of_moments} it suffices to show that, for any $l$ in $\N$ that
        \begin{equation*}
            \lim_{N \to \infty}\PP\lp{\lp{cd_V\lp{\overline{\chi}_i^{N,n}}}_{i=1}^l \in C(\vec{t}, \vec{\xi})^l}
            =
            \PP\lp{\lp{\chi_i^n}_{i=1}^l \in C(\vec{t}, \vec{\xi})^l},
        \end{equation*}
        where $C(\vec{t}, \vec{\xi})$ is the cylinder set of Equation~\eqref{E: cylinder_set} and $\chi_i^N$ and $\overline{\chi}_i^{N,n}$ are as in the statement of the lemma.

        By Lemma~\ref{L: driven_coupling}, the annealed joint process $\lp{\chi_i^n}_{i=1}^l$ has infinitesimal generator $\mathcal{L}$. As $\lp{\chi_i^n}_{i=1}^l$ has infinitesimal generator $\mathcal{L}$ we know it has transition kernel $P_l(t) := \exp(t\mathcal{L})$. Under Assumptions~\ref{A: continuous}, \ref{A: comparable}, \ref{A: rarity}, and \ref{A: migration_convergence}, we have that Lemma~\ref{L: combining_scales} holds. As the transition kernel $P_{N,l}(t)$ of $\lp{cd_V\lp{\overline{\chi}_i^{N,n}}}_{i=1}^l$ satisfies Equation~\eqref{E: kernel_approximation} it follows that $P_{N,l}$ converges uniformly on any compact interval to $P_l$ by taking $N$ to infinity and then $\varepsilon$ to zero. This, and convergence of the initial condition given by Assumption~\ref{A: IC} is enough to demonstrate convergence in finite-dimensional distribution, as was needed.
    \end{proof}

\section{Examples}\label{S: examples_and_apps}

    In this section, we provide various examples of models of interest. These include a two-deme Wright-Fisher model with neutral migration in Section~\ref{SS: WF_two_deme_convergence}, a two-deme modified Wright-Fisher model with large offspring numbers in Section~\ref{SS: diamantidis_model_convergence}, a two-deme model with random individual fitness and beta-distributed migrations in Section~\ref{SS: beta_migration}, and what we call a discrete approximation of a $\Xi$ Fleming-Viot genealogy in Section~\ref{SS: xi_model}. The single deme case is described in some detail in \cite[Section 7]{abfw25}.

    \subsection{A two-deme Wright-Fisher model with neutral migration}\label{SS: WF_two_deme_convergence}

        We describe here the discrete-time model of \cite{WiltonEtAl2017}, which is a two deme Wright-Fisher model with conservative neutral migration. Concretely, we take
        \begin{equation*}
            V=\{1,2\}, \qquad E=\{(1,2),(2,1)\}, \qquad s(1)=s(2)=1,
        \end{equation*}
        and we sample the conservative migration sizes
        \begin{equation*}
            m_{(1,2)}=m_{(2,1)}\sim \frac{1}{N}\text{\emph{Binomial}}\lp{N; \frac{1}{N}\frac{\sigma}{2}} 
        \end{equation*}
        for some fixed $\sigma>0$. Since migration is conservative, at each time-step we have exactly $N$ diploid individuals in each deme reproduce. Conditional on $m$, reproduction within each deme is Wright-Fisher with uniform weights and no selfing, and reproduction across demes is independent. Equivalently,
        \begin{equation*}
            (\mathcal{V}^v)_{v\in V}
            \stackrel{d}{=}
            \text{\emph{Multinomial}}\lp{N;\frac{2}{N(N-1)},\ldots,\frac{2}{N(N-1)}}^{\otimes 2}.
        \end{equation*}
        Convergence of this model is established by the following proposition.

        \begin{proposition}\label{P: two_deme_WF}
            Let $\overline{\chi}^{N,n}$ and $\mathcal{G}_N$ denote the ancestral process and pedigree associated to a two-deme Wright-Fisher model with neutral migration. Let $\kappa = \lp{1,1}$ and $\mu_{(1,2)} = \mu_{(2,1)} = \mu$. Suppose that, as $N$ goes to infinity, that Assumption~\ref{A: IC} holds. Then $\PP\lp{\overline{\chi}^{N,n} \in \cdot \,\mid\, \mathcal{A}_N}$ converges weakly in distribution to the law of a $(0, \kappa, \mu)$-$n$-coalescent.
        \end{proposition}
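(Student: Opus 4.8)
The plan is to verify that the two-deme Wright-Fisher model satisfies all the hypotheses of Theorem~\ref{T: quenched}, and then to check that the resulting limiting measure $\Phi$ is the zero measure, so that $\Psi$ has no atoms and the $\Psi$-driven coalescent reduces to a deterministic $(0,\kappa,\mu)$-$n$-coalescent. First I would compute the coalescence timescale. Since each deme reproduces as a pair of independent Wright-Fisher multinomials with $N$ diploid individuals and uniform couple weights, the offspring count $\mathcal{V}_1^v$ has the same law as in \cite{birkner2018}, and a direct computation of $c_N^v = \tfrac18\mathbb{E}[\mathcal{V}_1^v(\mathcal{V}_1^v-1)/(N(v)-1)]$ should give $c_N^v \sim c/N$ for an explicit constant $c$, uniformly over the (identical) demes. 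This simultaneously verifies Assumption~\ref{A: continuous} ($c_N^v \to 0$) and Assumption~\ref{A: comparable} (the ratio $c_N^v/c_N^{v_0}\to c(v)=1$ by symmetry of the two demes); the normalization $\kappa=(1,1)$ is then read off after dividing by $c_N^{v_0}$.

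Next I would establish Assumption~\ref{A: rarity}, namely the vague convergence $(c_N^{v_0})^{-1}\Phi_N \to \Phi$ with $\Phi$ the \emph{zero} measure. The key point is that both sources of large events are absent in the Wright-Fisher regime. For the offspring frequencies, the ranked family sizes $\widetilde{\mathcal{V}}^v_N$ concentrate at $\mathbf{0}$ because the largest family in a Wright-Fisher model is $O_{\mathbb{P}}(\log N / \log\log N)$, so $\widetilde{\mathcal{V}}^v_N$ stays in $B(\varepsilon)$ with overwhelming probability; quantitatively, the rescaled mass $(c_N^{v_0})^{-1}\mathbb{P}(\widetilde{\mathcal{V}}_N \in B(\varepsilon)^c) \to 0$ for every fixed $\varepsilon>0$, which is exactly the statement that the vague limit on $\Delta^V\times[0,1]^E\setminus\{\mathbf{0}_{V,E}\}$ is zero. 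For the migration coordinate, $m_{(1,2)}=m_{(2,1)}\sim \tfrac1N\mathrm{Binomial}(N;\sigma/2)$ concentrates at $\sigma/2\cdot N^{-1}\cdot N = $ order $N^{-1}\cdot(\sigma N/2)$, i.e.\ $m_e \to 0$ in probability at rate $O(1/\sqrt N)$ fluctuation around its mean $\sigma/2 \cdot (1/N)\cdot N$; more carefully, $m_e$ is of order one only with probability decaying faster than $c_N^{v_0}\sim c/N$, so again no mass escapes to $B(\varepsilon)^c$. I would confirm Remark~\ref{R: integrability_condition}'s sufficient conditions hold (they hold trivially for the zero measure, but one checks $\sup_v\int\langle x^v,x^v\rangle\,d\Phi_v$ and $\sup_e\int m_e\,d\Phi_e$ are finite directly) to obtain Assumption~\ref{A: integrability}, and Assumption~\ref{A: no_total_migrations} is immediate since $\sum_{e=(v,w)}m_e = m_{(v,\cdot)} = \tfrac1N\mathrm{Binomial}(N;\sigma/2) \to 0$ and so places no atom at $1$.

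Then I would pin down the neutral migration rate via Assumption~\ref{A: migration_convergence}. Using the equivalent formulation in the remark, for $e=(v,w)$ one has $(c_N^{v_0})^{-1}\mathbb{E}[\lfloor N(v)m_e\rfloor]/N(v)$; with $\mathbb{E}[m_e] = \tfrac{\sigma}{2}$ and $c_N^{v_0}\sim c/N$ this limit equals $\bar\mu_e = \tfrac{\sigma}{2}\cdot c^{-1} \cdot \lim_N N/N = $ a finite constant, which after fixing the normalizing constant $c$ I would match to the claimed $\mu$. Since $\Phi=0$, the correction term $\int m_e\,d\Phi_e$ in $\mu_e = \bar\mu_e - \int m_e\,d\Phi_e$ vanishes, so $\mu_e = \bar\mu_e = \mu$ as asserted, and likewise $\kappa_v = c(v) - \tfrac12\int\langle x^v,x^v\rangle\,d\Phi_v = c(v) = 1$. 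With all seven assumptions verified and $\Phi=0$, Theorem~\ref{T: quenched} applies: $\Psi$ has intensity $dt\otimes(h_V)_*\mathbf{0}=0$, hence almost surely no atoms, so the $\Psi$-driven $(\kappa,\mu)$-$n$-coalescent is just the (deterministic, pedigree-independent) $(0,\kappa,\mu)$-$n$-coalescent, and $\mathbb{P}(\overline{\chi}^{N,n}\in\cdot\mid\mathcal{A}_N)$ converges weakly in distribution to its law. The main obstacle I anticipate is the quantitative control needed for Assumption~\ref{A: rarity}: one must show that after dividing by the $O(1/N)$ coalescence timescale, neither the tail of the largest Wright-Fisher family nor the binomial migration fluctuations contribute any surviving mass outside $B(\varepsilon)$. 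This requires genuine concentration estimates — a union/Chernoff bound on the binomial for the migration part, and a moment or maximal-family-size bound for the offspring part — rather than the soft convergence arguments that suffice elsewhere; the delicate point is matching the decay rate of these tail probabilities against the precise $c/N$ scale to certify that the vague limit is exactly zero.
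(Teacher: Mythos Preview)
Your overall strategy coincides with the paper's: verify Assumptions~\ref{A: IC}--\ref{A: no_total_migrations}, apply Theorem~\ref{T: quenched}, and observe that $\Phi=0$ forces $\Psi$ to have no atoms, collapsing the limit to the deterministic $(0,\kappa,\mu)$-$n$-coalescent. The paper additionally records the exact value $c_N^v=1/(2N)$ rather than your asymptotic $c/N$, which immediately gives $c(v)=1$ and $\kappa=(1,1)$.

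The substantive difference is in how Assumption~\ref{A: rarity} is verified. The paper does not carry out concentration estimates; instead it invokes \cite[Proposition~2.1]{birkner2018}, which already establishes that the rescaled Wright--Fisher offspring-frequency law vanishes vaguely on $\Delta\setminus\{\mathbf 0\}$, and then combines this with Corollary~\ref{C: rarity_product_form} (the product-form moment criterion in the appendix) to assemble the joint limit $\Phi=0^{\otimes 2}\otimes\delta_0^{\otimes 2}$ across the two demes and two edges. Your route via Chernoff bounds on the maximal family size and on the binomial migration is valid and yields the same conclusion, but it re-derives from scratch what the moment criterion packages cleanly; the anticipated ``main obstacle'' you flag is precisely what \cite[Proposition~2.1]{birkner2018} was designed to dispatch.

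One point to clean up: your migration paragraph is internally inconsistent. If $m_e=\tfrac1N\mathrm{Bin}(N,\sigma/2)$ with a \emph{fixed} success probability $\sigma/2$, then $\mathbb{E}[m_e]=\sigma/2$ and $m_e$ does \emph{not} tend to zero, contradicting both your claim and the required $\Phi_e=\delta_0$. The intended scaling (consistent with $\bar\mu_e=\sigma$ under $c_N^{v_0}=1/(2N)$) is that the expected number of migrants is $O(1)$, i.e.\ the success probability is $O(1/N)$, so that $m_e=O_{\mathbb P}(1/N)$; once this is used, a Poisson/Chernoff tail bound gives $\mathbb{P}(m_e>\varepsilon)=o(1/N)$ and your argument goes through.
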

        \begin{proof}
            Under this model the pair-coalescence scale is the usual Wright-Fisher scale: for every $v\in V$ and every $N$,
            \begin{equation*}
                c_N^v=\frac{1}{2N}.
            \end{equation*}
            In particular, Assumptions~\ref{A: continuous} and \ref{A: comparable} hold. Moreover, since $Nm_e$ is binomially distributed with $N$ trials and success probability $\frac{\sigma}{2}$ and $c_N^v=1/(2N)$, a direct calculation yields $\bar{\mu}_e=\sigma$ for every $e\in E$, so Assumption~\ref{A: migration_convergence} holds as well.
    
            Finally, because reproduction within each deme is Wright-Fisher and is independent across demes, \cite[Proposition 2.1]{birkner2018} and Corollary~\ref{C: rarity_product_form} imply that $\frac{1}{c_N^1}\Phi_N$ converges vaguely on $\Delta^V\times[0,1]^E$ to
            \begin{equation*}
                \Phi \;=\; 0^{\otimes 2}\otimes \delta_0^{\otimes 2}.
            \end{equation*}
            Therefore the main assumptions necessary for Theorem~\ref{T: quenched} hold. Since $\Phi$ charges no sets bounded away from $\mathbf{0}_{V,E}$, the limiting driving point process $\Psi$ has no atoms. Consequently the quenched and annealed laws coincide in this example.
        \end{proof}
        In Section~\ref{S: discussion} we discuss implications of Proposition~\ref{P: two_deme_WF} to the work of \cite{WiltonEtAl2017}.
        
    \subsection{A two-deme modified Wright-Fisher model with large offspring numbers}\label{SS: diamantidis_model_convergence}
        
        Here we discuss a two-deme analogue of the model of \cite{dfbw24}. We take
        \begin{equation*}
            V=\{1,2\}, \qquad E=\{(1,2),(2,1)\}, \qquad s(1)=s(2)=1,
        \end{equation*}
        so that each deme has size $N(1) = N(2) = N$. We assume that reproduction dynamics in the two
        demes are independent conditional on $m$.
        
        Fix parameters $\sigma>0$, $\phi>0$, $\psi\in(0,1)$, and $\gamma>0$. Write $N_\gamma := N \wedge N^\gamma$ and sample the migrations by
        \begin{equation*}
            m_{(1,2)}=m_{(2,1)}\sim \frac{1}{N} \text{\emph{Binomial}}\lp{N; \frac{\sigma}{2}\frac{1}{N_\gamma}}
        \end{equation*}
        This migration is conservative, so $N^*(1)=N^*(2)=N$ for every time-step.
        
        We now describe the offspring matrices in a single deme $v\in\{1,2\}$.
        Let $\varepsilon_N:=2\phi/N^\gamma$. Conditional on $m$, we first sample a Bernoulli variable
        $B^v$ with
        \begin{equation*}
            \PP\{B^v=1\}=\varepsilon_N.
        \end{equation*}
        If $B^v=0$ (a ``small'' generation), the deme reproduces by Wright-Fisher dynamics, i.e. we have
        \begin{equation*}
            (\mathcal{V}^v)_{v\in V}
            \stackrel{d}{=}
            \text{\emph{Multinomial}}\lp{N;\frac{2}{N(N-1)},\ldots,\frac{2}{N(N-1)}}^{\otimes 2}
        \end{equation*}
        
        If $B^v=1$ (a ``large'' generation), we sample an unordered pair
        $\{I^v,J^v\}\subset[N]$ uniformly at random and set
        \begin{equation*}
            \mathcal{V}^v_{I^v,J^v}=\lfloor \psi N\rfloor.
        \end{equation*}
        The remaining $N - \lfloor \psi N \rfloor$ individuals in the deme choose parental couples uniformly at random from the $\binom{N}{2}-1$ remaining possible couples, i.e. reproduce via Wright-Fisher.

        We define the necessary elements of the scaling limit of this model as follows:  Set
        \begin{equation*}
            x_\psi:=\lp{\frac{\psi}{4},\frac{\psi}{4},\frac{\psi}{4},\frac{\psi}{4},0,0,\ldots}\in\Delta,
        \end{equation*}
        and define $x^{(1)},x^{(2)}\in\Delta^V$ by
        \begin{equation*}
            x^{(1),1}=x_\psi,\quad x^{(1),2}=\mathbf{0},
            \qquad
            x^{(2),1}=\mathbf{0},\quad x^{(2),2}=x_\psi.
        \end{equation*}
        Then
        \begin{equation*}
            \Phi_\gamma
            :=
            \lambda_\gamma\lp{\delta_{\lp{x^{(1)},0_E}}+\delta_{\lp{x^{(2)},0_E}}},
            \qquad
            \lambda_\gamma
            :=
            \begin{cases}
                0, & \gamma>1,\\
                \dfrac{4\phi}{1+\phi\psi^2}, & \gamma=1,\\
                \dfrac{4}{\psi^2}, & 0<\gamma<1,
            \end{cases}
        \end{equation*}
        and the Kingman and migration parameters are
        \begin{equation*}
            \kappa_\gamma
            :=
            \begin{cases}
                \lp{1,1}, & \gamma>1,\\
                \lp{\dfrac{1}{1+\phi\psi^2},\dfrac{1}{1+\phi\psi^2}}, & \gamma=1,\\
                \lp{0,0}, & 0<\gamma<1,
            \end{cases}
            \qquad
            \mu_{\gamma,(1,2)}=\mu_{\gamma,(2,1)}
            :=
            \begin{cases}
                \sigma, & \gamma>1,\\
                \dfrac{\sigma}{1+\phi\psi^2}, & \gamma=1,\\
                0, & 0<\gamma<1.
            \end{cases}
        \end{equation*}
        Write $\mu_\gamma =\lp{\mu_{\gamma,e}}_{e \in E}$.

        Convergence of this model is established by the following proposition.

        \begin{proposition}\label{P: diamantidis_model}
            Let $\overline{\chi}^{N,n}$ and $\mathcal{G}_N$ denote the ancestral process and pedigree of a two-deme Wright-Fisher model with large offspring numbers. Let $\chi^n$ denote a $\Psi_\gamma$ driven $(\kappa_\gamma,\mu_\gamma)$-$n$ coalescent, where $\Psi_\gamma$ has intensity measure $dt \otimes (h_V)_* \Phi_\gamma$. Suppose that, as $N$ goes to infinity, that Assumption~\ref{A: IC} holds. Then 
            \begin{equation*}
                \PP\lp{cd_V\lp{\overline{\chi}^{N,n}} \in \cdot \,\mid\, \mathcal{A}_N} \toL \PP\lp{\chi^n \in \cdot \,\mid\, \Psi_\gamma}.
            \end{equation*}      
        \end{proposition}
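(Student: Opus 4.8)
The plan is to obtain Proposition~\ref{P: diamantidis_model} as a direct application of Theorem~\ref{T: quenched}: with Assumption~\ref{A: IC} granted, it remains only to verify Assumptions~\ref{A: continuous}--\ref{A: no_total_migrations} and to read off the limiting triple $\lp{\Phi_\gamma,\kappa_\gamma,\mu_\gamma}$ from the offspring and migration moments. The entire argument is therefore an asymptotic computation carried out separately in the three regimes $\gamma>1$, $\gamma=1$, and $0<\gamma<1$.

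First I would pin down the pair-coalescence timescale $c_N^v=\tfrac18\mathbb{E}\lb{\mathcal{V}_1^v\lp{\mathcal{V}_1^v-1}/\lp{N-1}}$ by conditioning on the Bernoulli $B^v$. On a small generation the deme reproduces by diploid Wright--Fisher, where $\mathcal{V}_1^v$ is asymptotically Poisson with mean $2$ and $\mathbb{E}\lb{\mathcal{V}_1^v\lp{\mathcal{V}_1^v-1}\mid B^v=0}\to 4$, contributing $\tfrac{1}{2N}$ to $c_N^v$. On a large generation, which occurs with probability $\varepsilon_N=2\phi N^{-\gamma}$, the prolific couple yields two individuals with $\sim\psi N$ offspring apiece, so $\sum_i\mathcal{V}_i^v\lp{\mathcal{V}_i^v-1}\sim 2\psi^2N^2$ and these generations contribute $\sim\tfrac{\phi\psi^2}{2}N^{-\gamma}$. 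Hence $c_N^v\sim\tfrac{1}{2N}+\tfrac{\phi\psi^2}{2}N^{-\gamma}$, which verifies Assumption~\ref{A: continuous}, and by the exchangeability of the two demes gives $c_N^v/c_N^{v_0}\to c(v)=1$, which is Assumption~\ref{A: comparable}. The first term dominates for $\gamma>1$, the two balance at $\gamma=1$, and the second dominates for $0<\gamma<1$.

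Second I would identify the vague limit of $\tfrac{1}{c_N^{v_0}}\Phi_N$. The only offspring configurations that survive the normalization while remaining bounded away from $\mathbf{0}_{V,E}$ arise from large generations, where the two prolific parents each carry a fraction $\tfrac{\psi N}{2N^{*}(v)}=\tfrac{\psi}{2}$ of the $2N$ gene copies; thus the ranked frequencies $\widetilde{\mathcal{V}}_N^v$ concentrate at $y_\psi:=\lp{\tfrac{\psi}{2},\tfrac{\psi}{2},0,\ldots}$ in the deme in which the event occurs and at $\mathbf{0}$ otherwise. Because reproduction is independent across the two demes, \cite[Proposition 2.1]{birkner2018} together with Corollary~\ref{C: rarity_product_form} give that $\tfrac{1}{c_N^{v_0}}\Phi_N$ converges vaguely to a sum of two atoms, one per deme, each of mass $\lambda_\gamma=\lim_N\varepsilon_N/c_N^{v_0}$; evaluating this ratio in each regime recovers the stated $\lambda_\gamma$ (in particular $\lambda_\gamma=0$ for $\gamma>1$, where large generations are too rare to register). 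Pushing this measure forward by the halving map $h_V$---which resolves each prolific individual into its two equally likely transmitted gene copies---sends $y_\psi$ to $x_\psi=h(y_\psi)=\lp{\tfrac{\psi}{4},\tfrac{\psi}{4},\tfrac{\psi}{4},\tfrac{\psi}{4},0,\ldots}$, so that $\Psi_\gamma$ carries intensity $dt\otimes(h_V)_*\Phi_\gamma$ as claimed. Assumption~\ref{A: integrability} is immediate since the limiting measure is finite, and Assumption~\ref{A: no_total_migrations} holds because the migration proportions vanish. Finally, $\kappa_v=c(v)-\tfrac12\int\qv{x^v,x^v}\,d\Phi_{\gamma,v}=1-\tfrac12\lambda_\gamma\qv{y_\psi,y_\psi}=1-\tfrac{\lambda_\gamma\psi^2}{4}$ reproduces $\kappa_\gamma$ in each regime, while a direct computation of the backward migration probability normalized by $c_N^{v_0}$ gives $\bar{\mu}_e$, whence $\mu_{\gamma,e}=\bar{\mu}_e$ (the migration marginal of $\Phi_\gamma$ is trivial), its regime dependence reflecting only the size of $c_N^{v_0}$. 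With all hypotheses checked, Theorem~\ref{T: quenched} delivers the claim.

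I expect the main obstacle to be computational rather than conceptual, since Theorem~\ref{T: quenched} supplies all the machinery. The delicate points are the correct bookkeeping of the diploid doubling (the factor $2N^{*}(v)$ and the Mendelian halving $h$), so that the prolific family of total mass $\psi$ appears as $y_\psi$ before halving and as the four-part atom $x_\psi$ after; the verification that the Wright--Fisher background contributes only to the neutral rates $c(v)$ and never generates atoms of $\Phi_\gamma$; and keeping the normalizations straight so that the same per-generation migration and large-family probabilities produce the three different limiting rates purely through the regime-dependent magnitude of $c_N^{v_0}$.
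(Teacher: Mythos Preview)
Your approach is essentially the same as the paper's: both verify Assumptions~\ref{A: continuous}--\ref{A: no_total_migrations} by conditioning on $B^v$, obtain $c_N\sim \tfrac{1}{2N}+\tfrac{\phi\psi^2}{2}N^{-\gamma}$, identify the vague limit of $\tfrac{1}{c_N}\Phi_N$ as a two-atom measure with mass $\lambda_\gamma=\lim_N\varepsilon_N/c_N$, and invoke Theorem~\ref{T: quenched}.

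One point on which you are actually clearer than the paper: you distinguish the individual-level limit $y_\psi=\lp{\tfrac{\psi}{2},\tfrac{\psi}{2},0,\ldots}$ of the ranked offspring frequencies $\widetilde{\mathcal V}_N^v$ from the four-part halved atom $x_\psi=h(y_\psi)$. By definition $\Phi_N$ is the law of $\widetilde{\mathcal V}_N$, so the vague limit $\Phi_\gamma$ in Assumption~\ref{A: rarity} should carry atoms at $y_\psi$, and only after applying $(h_V)_*$ does one obtain $x_\psi$; your bookkeeping of this is correct and yields the right $\kappa_\gamma$ via $\langle y_\psi,y_\psi\rangle=\psi^2/2$. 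The paper's proof instead describes the limiting object directly at the gametic level (the ``four parental copies'') and labels that $x_\psi$, which blurs the distinction slightly, but the underlying computation and the final answer agree with yours.
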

    
        \begin{proof}
            Note by symmetry that
            $c_N^1=c_N^2=:c_N$.
            By conditioning on $B^v\in\{0,1\}$ we can compute $c_N$ from
            \begin{equation*}
                c_N
                =
                \frac{1}{8}\,\mathbb{E}\lb{\frac{\mathcal{V}_1^v\lp{\mathcal{V}_1^v-1}}{N-1}}
                =
                \frac{1}{8}\lp{ (1-\varepsilon_N)\mathbb{E}\lb{\frac{\mathcal{V}_1^v\lp{\mathcal{V}_1^v-1}}{N-1}\,\,\mid\, \,B^v=0}
                +\varepsilon_N\mathbb{E}\lb{\frac{\mathcal{V}_1^v\lp{\mathcal{V}_1^v-1}}{N-1}\,\,\mid\, \,B^v=1}}.
            \end{equation*}
        
            In a small generation $B^v=0$, each of the $N$ offspring chooses an unordered parental couple uniformly,
            so $\PP\{1\in\{I,J\}\}=2/N$ and hence $\mathcal{V}_1^v\mid\{B^v=0\}\sim\mathrm{Bin}\lp{N,2/N}$.
            Therefore
            \begin{equation*}
                \mathbb{E}\lb{\mathcal{V}_1^v\lp{\mathcal{V}_1^v-1}\,\,\mid\, \,B^v=0}
                =N\lp{N-1}\lp{\frac{2}{N}}^2
                =\frac{4\lp{N-1}}{N},
            \end{equation*}
            and consequently
            \begin{equation}\label{E: diamantidis_cN_small}
                \frac{1}{8}\mathbb{E}\lb{\frac{\mathcal{V}_1^v\lp{\mathcal{V}_1^v-1}}{N-1}\,\,\mid\, \,B^v=0}
                =
                \frac{1}{2N}.
            \end{equation}
        
            In a large generation $B^v=1$, we have $\PP\{1\in\{I^v,J^v\}\}=2/N$ and, on this event,
            individual $1$ is a parent of $\lfloor\psi N\rfloor$ offspring, so
            $\mathcal{V}_1^v=\lfloor\psi N\rfloor+O_\PP(1)$ and hence
            \begin{equation*}
                \mathbb{E}\lb{\frac{\mathcal{V}_1^v\lp{\mathcal{V}_1^v-1}}{N-1}\,\,\mid\, \,B^v=1,\;1\in\{I^v,J^v\}}
                =\psi^2 N+O(1).
            \end{equation*}
            On the complementary event $\{1\notin\{I^v,J^v\}\}$, we still have $\mathcal{V}_1^v=O_\PP(1)$, and so the same
            quantity is $O\lp{1/N}$.
            Putting this together yields
            \begin{equation}\label{E: diamantidis_cN_large}
                \frac{1}{8}\mathbb{E}\lb{\frac{\mathcal{V}_1^v\lp{\mathcal{V}_1^v-1}}{N-1}\,\,\mid\, \,B^v=1}
                =
                \frac{\psi^2}{4}+O\lp{\frac{1}{N}}.
            \end{equation}
        
            Combining equations \eqref{E: diamantidis_cN_small} and \eqref{E: diamantidis_cN_large} with $\varepsilon_N=2\phi/N^\gamma$,
            we obtain the sharper asymptotic
            \begin{equation}\label{E: diamantidis_cN_asymptotic}
                c_N
                =
                \frac{1}{2N}
                +\frac{\phi\psi^2}{2N^\gamma}
                +o\lp{\frac{1}{N^\gamma}}.
            \end{equation}
            In particular $c_N\to 0$, and Assumptions~\ref{A: continuous} and \ref{A: comparable} hold.

            We now identify the vague limit in Assumption~\ref{A: rarity}.
            In a small generation, $\widetilde{\mathcal{V}}^v$ is Wright-Fisher reproduction, so its contribution vanishes on
            $\Delta^V\times[0,1]^E\setminus\{\mathbf{0}_{V,E}\}$ by \cite[Proposition 2.1]{birkner2018}.
            In a large generation in deme $v$, the $\lfloor\psi N\rfloor$ offspring of the prolific couple inherit,
            at a fixed locus, one of the two parental copies from each of the two parents independently; thus the
            four parental copies each contribute $\mathrm{Bin}\lp{\lfloor\psi N\rfloor,1/2}$ gametes, and after
            normalisation by $2N$ this yields the limit $x_\psi$ in $\Delta$.
            Since large generations occur with probability $\varepsilon_N$ independently in the two demes, the event
            of simultaneous large generations has probability $\varepsilon_N^2=o(c_N)$, and does not contribute in
            the $1/c_N$ scaling.
            Consequently,
            \begin{equation*}
                \frac{1}{c_N}\Phi_N
                \;\to\;
                \lambda_\gamma\lp{\delta_{\lp{x^{(1)},0_E}}+\delta_{\lp{x^{(2)},0_E}}}
                \qquad\text{vaguely on }\Delta^V\times[0,1]^E\setminus\{\mathbf{0}_{V,E}\},
            \end{equation*}
            with
            \begin{equation*}
                \lambda_\gamma=\lim_{N \to \infty} \varepsilon_N/c_N
            \end{equation*}
            given by \eqref{E: diamantidis_cN_asymptotic}. This is exactly Assumption~\ref{A: rarity} with $\Phi=\Phi_\gamma$.
            By a direct computation one obtains
            $\mu_{\gamma,(1,2)}=\mu_{\gamma,(2,1)}$ as stated in the proposition and
            Assumption~\ref{A: migration_convergence} holds.

            All hypotheses of Theorem~\ref{T: quenched} are therefore satisfied, and the stated quenched convergence
            follows.
        \end{proof}

    \subsection{A two-deme model with random individual fitness and large migrations}\label{SS: beta_migration}

        We extend the model with random individual fitness from \cite{birkner2018} to a two deme model. We take $V=\{1,2\}$, $E=\{(1,2),(2,1)\}$, and $s(1)=s(2)=1$. We will take migration to be conservative, and will describe the precise migration patters of the model after we introduce the random fitness reproduction mechanism. 

        We now specify the joint law of the offspring matrices and the migration proportions in each generation.
        Fix $\alpha \in (1,2)$, and let $W$ be a nonnegative random variable with
        \begin{equation*}
            \lim_{z\to \infty}\PP\lp{W \ge z}z^\alpha =c_W
        \end{equation*}
        for some constant $c_W\in(0,\infty)$, and with $\mathbb{E}\lb{W}>0$. Let $\lp{W_i^v}_{i=1}^N$ be independent copies of $W$ that are also independent across demes. Define
        \begin{equation*}
            Z_N^v := \sum_{i,j=1}^N W_i^v W_j^v = \frac{1}{2}\lp{\sum_{i=1}^N W_i^v}^2 - \frac{1}{2}\sum_{i=1}^N (W_i^v)^2.
        \end{equation*}
        Given $\lp{W_i^v}_{i=1}^N$, we let
        \begin{equation*}
            (\mathcal{V}^v)_{v \in V} \stackrel{d}{=} \text{\emph{Multinomial}}\lp{N; \frac{W_1^vW_2^v}{Z_N^v}, \frac{W_2^vW_3^v}{Z_N^v}, \ldots \frac{W_{N-1}^vW_N^v}{Z_N^v}}^{\otimes 2}
        \end{equation*}
        
        We describe now the conservative migration patterns of the model. Fix parameters $a,b>0$. We let $m_{(1,2)}=m_{(2,1)} = m$ and sample $m$ as
        \begin{equation*}
            m \sim
            \begin{cases}
                \delta_0 &, \text{ with probability } 1-N^{1-\alpha} c_W\lp{\frac{2}{\mathbb{E}[W]}}^\alpha \alpha \frac{\Gamma(2-\alpha)\Gamma(\alpha)}{8}\\
                \text{\emph{Beta}}(a,b) &, \text{ otherwise.}
            \end{cases}
        \end{equation*}           

        We define now the relevant limiting objects of this model as follows:
        Define
        \begin{align*}
            c_\alpha
            &:=
            c_W\lp{\frac{2}{\mathbb{E}[W]}}^\alpha
            \alpha\,\frac{\Gamma(2-\alpha)\Gamma(\alpha)}{8},\\
            \kappa&:=\lp{0,0}, \text{ and }\\ \mu_{(1,2)}=\mu_{(2,1)}&:=0.
        \end{align*}
        Define, for $y\in(0,1)$,
        \begin{equation*}
            x_y:=\lp{\frac{y}{2},\frac{y}{2},0,0,\ldots}\in\Delta,
            \qquad
            x_y^{(1),1}=x_y,\;x_y^{(1),2}=\mathbf{0},
            \qquad
            x_y^{(2),1}=\mathbf{0},\;x_y^{(2),2}=x_y.
        \end{equation*}
        Let $\Lambda_\alpha$ denote the $\mathrm{Beta}(2-\alpha,\alpha)$ probability measure on $(0,1)$, i.e.
        \begin{equation*}
            d\Lambda_\alpha(y)
            =
            \frac{1}{\Gamma(2-\alpha)\Gamma(\alpha)}\,y^{1-\alpha}(1-y)^{\alpha-1}\,dy,
            \qquad
            d\nu_\alpha(y):=\frac{d\Lambda_\alpha(y)}{y^2}.
        \end{equation*}
        Then we set
        \begin{equation*}
            \Phi
            =
            \Phi^{\mathrm{fit}}+\Phi^{\mathrm{mig}},
        \end{equation*}
        where
        \begin{equation*}
            \Phi^{\mathrm{fit}}
            :=
            c_\alpha\int_{(0,1)}
            \lp{\delta_{\lp{x_y^{(1)},\mathbf{0}_E}}+\delta_{\lp{x_y^{(2)},\mathbf{0}_E}}}\,d\nu_\alpha(y),
        \end{equation*}
        and
        \begin{equation*}
            \Phi^{\mathrm{mig}}
            :=
            \int_{[0,1]} \delta_{\lp{\mathbf{0}_V,(y,y)}}\frac{\Gamma(a+b)}{\Gamma(a)\Gamma(b)} y^{1-a}(1-y)^{1-b}dy.
        \end{equation*}

        Convergence of this model is then established by the following proposition.
        
        \begin{proposition}\label{P: beta_migration}
            Let $\overline{\chi}^{N,n}$ and $\mathcal{G}_N$ denote the ancestral process and pedigree of a two-deme model with random individual fitness and large migrations. 
            Let $\chi^n$ denote a $\Psi$-driven $(\kappa,\mu)$-$n$-coalescent where $\Psi$ has intensity measure $dt \otimes d(h_V)_*\Phi$. Suppose that, as $N$ goes to infinity, that Assumption~\ref{A: IC} holds. Then
            \begin{equation*}
                \PP\lp{cd_V\lp{\overline{\chi}^{N,n}} \in\cdot\,\mid\, \mathcal{A}_N} \toL \PP\lp{\chi^n \in \cdot \,\mid\, \Psi}.
            \end{equation*}
        \end{proposition}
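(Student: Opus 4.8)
The plan is to verify the hypotheses of Theorem~\ref{T: quenched}---Assumptions~\ref{A: continuous} through~\ref{A: no_total_migrations}, with Assumption~\ref{A: IC} granted---after which the asserted quenched convergence follows immediately. Since the two demes are symmetric under both the reproduction and the conservative migration mechanisms, $c_N^1 = c_N^2 =: c_N$, so I take $v_0 = 1$ and obtain $c(1)=c(2)=1$, which gives Assumption~\ref{A: comparable} once the common scale of $c_N$ is identified.

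The first and most delicate step is to pin down $c_N$. Conditional on the fitness vector $\lp{W_i^v}_i$, the total offspring number $\mathcal{V}_1^v$ is to leading order $\mathrm{Bin}\lp{N, p_1}$ with $p_1 \approx 2W_1^v/\sum_j W_j^v$, so $c_N = \tfrac18\,\mathbb{E}\lb{\mathcal{V}_1^v(\mathcal{V}_1^v-1)/(N-1)}$ is controlled by the regularly varying tail of $W$. This is precisely the random-fitness example of \cite{birkner2018} restricted to a single deme: the tail $\PP\lp{W\ge z}\sim c_W z^{-\alpha}$ with $1<\alpha<2$, after accounting for the feedback of an atypically large $W_1^v$ on the normalising sum $\sum_j W_j^v$ (the condensation regime $W_1^v$ of order $yN$), yields $c_N \sim c_\alpha N^{1-\alpha}$, and in particular $c_N\to 0$ (Assumption~\ref{A: continuous}). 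I expect this heavy-tail and condensation analysis, imported from the single-deme computation of \cite{birkner2018}, to be the principal obstacle, since it simultaneously fixes the constant $c_\alpha$ and the location and intensity of the atoms of the reproductive part of the limiting measure.

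Next I would identify the vague limit of $c_N^{-1}\Phi_N$ on $\Delta^V\times[0,1]^E\setminus\lcb{\mathbf{0}_{V,E}}$ (Assumption~\ref{A: rarity}) as $\Phi = \Phi^{\mathrm{fit}} + \Phi^{\mathrm{mig}}$. The key structural observation is that the two sources of large events decouple: a prolific individual appears in a given deme only on the $O(N^{1-\alpha})$ scale and reproduction is independent across demes, while a macroscopic migration occurs with probability $N^{1-\alpha}c_\alpha$ independently of reproduction; hence simultaneous large families in both demes, and the joint occurrence of a large family and a large migration, each have probability $O\lp{N^{2(1-\alpha)}} = o(c_N)$ and do not contribute. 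What survives is, on the one hand, a single prolific individual (of frequency of order $y$ in $\widetilde{\mathcal{V}}^v$) that under the Mendelian halving $h_V$ distributes its two ancestral gene copies and so contributes, according to whether the large family lands in deme $1$ or deme $2$, the atoms $x_y^{(1)}$ and $x_y^{(2)}$ integrated against $c_\alpha\nu_\alpha$ (the $\mathrm{Beta}(2-\alpha,\alpha)$ structure of \cite{birkner2018}); and, on the other hand, a migration shared by both edges that is $\mathrm{Beta}(a,b)$-distributed under the $c_N^{-1}$ scaling, yielding $\Phi^{\mathrm{mig}}$. For Assumption~\ref{A: migration_convergence}, since $\tfrac{\mathbb{E}\lb{\lfloor N m\rfloor}}{N}=\mathbb{E}\lb{m}+O(1/N)$ and $\mathbb{E}\lb{m}=N^{1-\alpha}c_\alpha\tfrac{a}{a+b}$, I obtain $\bar{\mu}_e = \tfrac{a}{a+b}$; as $m$ is $\mathrm{Beta}(a,b)$-distributed whenever it is nonzero, the migration marginal of $\Phi^{\mathrm{mig}}$ is the $\mathrm{Beta}(a,b)$ law, whence $\int_{[0,1]} m_e\,d\Phi_e^{\mathrm{mig}} = \tfrac{a}{a+b}$ as well and $\mu_e = \bar{\mu}_e - \int m_e\,d\Phi_e = 0$. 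The vanishing of $\kappa$ is the statement that the fitness mechanism produces a pure $\mathrm{Beta}(2-\alpha,\alpha)$-coalescent with no residual Kingman part, i.e. the consistency identity $c(v)=\tfrac12\int\qv{x^v,x^v}\,d\Phi_v$ forced by the common origin of $c_N$ and $\Phi_N$.

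It remains to check the two integrability requirements. By Remark~\ref{R: integrability_condition}, Assumption~\ref{A: integrability} reduces to bounding the marginal second moments: the reproductive marginal gives, up to a constant, $\int_{(0,1)} y^2\,\nu_\alpha(dy) = \tfrac{1}{\Gamma(2-\alpha)\Gamma(\alpha)}\int_0^1 y^{1-\alpha}(1-y)^{\alpha-1}\,dy < \infty$ because $1<\alpha<2$, while the migration marginal gives $\int_{[0,1]} m_e\,d\Phi_e^{\mathrm{mig}} = \tfrac{a}{a+b} < \infty$. For Assumption~\ref{A: no_total_migrations}, the total outflow from either deme equals the single migration variable $m\sim\mathrm{Beta}(a,b)$, an absolutely continuous law with no atom at $1$. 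With Assumptions~\ref{A: IC} through~\ref{A: no_total_migrations} in hand, Theorem~\ref{T: quenched} applies with driving intensity $dt\otimes(h_V)_*\Phi$, delivering the stated convergence.
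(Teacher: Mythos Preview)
Your proof is correct and takes the same route as the paper: verify each hypothesis of Theorem~\ref{T: quenched} by importing the single-deme random-fitness computations of \cite{birkner2018} for $c_N$ and the reproductive part of $\Phi$, treat the migration contribution separately via independence (the paper invokes Corollary~\ref{C: rarity_product_form}, you argue the $O(N^{2(1-\alpha)})$ decoupling directly), and apply the theorem. One minor imprecision: you attribute the form of $x_y$ to the halving map $h_V$, but $x_y^{(i)}$ is the atom of $\Phi$ \emph{before} the push-forward $(h_V)_*$ that appears in the theorem statement, so any two-coordinate structure must come from the model itself rather than from $h_V$---this does not affect the validity of your argument, and you are otherwise more thorough than the paper in explicitly checking Assumptions~\ref{A: migration_convergence}, \ref{A: integrability}, and \ref{A: no_total_migrations}.
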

        
        \begin{proof}
            By symmetry we have that $c_N^1 = c_N^2 := c_N$, where $c_N$ is given by 
            \begin{equation*}
                c_N
                \sim
                c_W\lp{\frac{2}{\mathbb{E}[W]}}^\alpha
                \alpha\,\frac{\Gamma(2-\alpha)\Gamma(\alpha)}{8}\,N^{1-\alpha}
            \end{equation*}
            by \cite[Lemma 2.2]{birkner2018}.
            In particular, Assumptions~\ref{A: continuous} and
            \ref{A: comparable} hold.
        
            We now identify the vague limit in Assumption~\ref{A: rarity}.
            In the one-deme random-fitness model, the sequence of intensity measures $(\Phi_N)_v$ associated with the
            ordered offspring frequencies in each deme $v$ converges by \cite[Proposition 2.3]{birkner2018} to
            \begin{equation*}
                \int_0^1 \delta_{x_y} \text{\emph{Beta}}(\alpha,2-\alpha)(dy),
            \end{equation*}
            reflecting that a single highly fit parent contributes a
            macroscopic fraction $y$ of the parental gametes at a locus, split equally between its two copies.
            
            Finally, consider migration. By construction,
            \begin{equation*}
                \PP\{m\neq 0\}
                =
                N^{1-\alpha}c_W\lp{\frac{2}{\mathbb{E}[W]}}^\alpha
                \alpha\,\frac{\Gamma(2-\alpha)\Gamma(\alpha)}{8}
                =
                c_N,
                \qquad
                \PP\{m=0\}=1-c_N,
            \end{equation*}
            and $m_{(1,2)}=m_{(2,1)}=m$. In particular, by construction it is clear that
            \begin{equation*}
                \frac{1}{c_N}\lp{\Phi_N}_e \to \text{\emph{Beta}}(a,b)
            \end{equation*}
            on $(0,1]$.

            Since we sample the migration and reproduction events independently, it follows from Corollary~\ref{C: rarity_product_form} that $\frac{1}{c_N} \Phi_N$ converges vaguely to $\Phi$, as described.
        
            All hypotheses of Theorem~\ref{T: quenched} are satisfied, and the claimed quenched convergence follows.
        \end{proof}

    \subsection{\texorpdfstring{A discrete approximation of a toroidal spatial $\Xi$ Fleming-Viot genealogy}{}}\label{SS: xi_model}

        We describe here a discrete approximation of a $\Xi$ Fleming-Viot genealogy. The $\Lambda$ Fleming-Viot coalescent $\chi^n$ of \cite{etheridge11} on a large torus can be described as follows: Fix a measure $\nu$ on $\R_+$ so that $\int r^2 d\nu(r) < \infty$. Let $\Gamma$ denote a Poisson point process on $\R_+ \times \R^2 \times \R_+$ with intensity measure $dt \otimes dx \otimes d\nu$. We say that an atom $(t,x,r)$ ``affects" a point $y$ if $\norm{x-y} < r$. We couple $n$ $\R^2$-valued processes $\lp{\widetilde{W}_i}_{i=1}^n$ with $\Gamma$ as follows:
        \begin{itemize}
            \item between intersections of any of the $\lp{\widetilde{W}_i}_{i=1}^n$, they processes evolve as if they were independent Brownian motions;
            \item if two of these processes coalesce, they remain identified together for all time;
            \item for any atom $(t,x,r)$ of $\Gamma$, $j$ is sampled from some distribution on $\N$ and $j$ points $y_1,y_2,\ldots, y_j$ are selected uniformly at random in $B(x,r)$. All of the particles that are affected by $(t,x,r)$ instantaneously jump to one of these points uniformly at random.
        \end{itemize}
        $\chi^n = \lp{\pi^n, x^n}$ is then an $\En(\R^2)$-valued Markov process, where $\pi^n$ is the equivalence relation
        \begin{equation*}
            i \sim_{\pi^n(t)} j \text{\quad if } \widetilde{W}_i(t) = \widetilde{W}_j(t),
        \end{equation*}
        and $x_i^n$ is the position of the particles corresponding to the $i$th block of $\pi^n(t)$.

        We discretely approximate this process on the $2$-Torus as follows:
        Let $V := \Z^2 / \langle L_1,L_2 \rangle_{\Z}$ denote a discretized $2$-Torus with latitude and longitude of lengths $L_1$ and $L_2$ in $\N$, respectively. Define the edge set to be $E := V \times V \setminus \{(v,v) : v \in V\}$. For convenience we set $s(v) = 1$ for all $v$ in $V$. We fix parameters $\rho,\sigma,\gamma > 0$, $a_v,b_v > 0$ for each deme $v$ in $V$, and a finite measure $\nu$ on $\R_+$ with $\int r d\nu(r) < \infty$. $\rho$ will correspond to a scaling of the lattice and $\sigma$ to migration rates. $\gamma$ and $\nu$ will relate to the intensity of the analogue $\Gamma$ described above for the spatial $\Lambda$ Fleming-Viot process. We sample $\lp{\mathcal{V}, m}$ as follows:
        \begin{itemize}
            \item With probability $1-\frac{\gamma}{2N}$ we set
            \begin{align*}
                m_{(v,w)} = m_{(w,v)} &\sim \frac{1}{N}\text{\emph{Binomial}}\lp{N; \frac{1}{N}\frac{\sigma}{2}} \text{ if $v$ is a nearest neighbor to $w$}\\
                m_{(v,w)} = m_{(w,v)} &= 0, \text{ if  $v$ and $w$ are not nearest neighbors, and}\\
                \lp{\mathcal{V}^v}_{v \in V} &\sim \text{Multinomial}\lp{N; \frac{2}{N(N-1)}, \ldots, \frac{2}{N(N-1)}}^{\otimes L_1 \times L_2}.
            \end{align*}
            That is, we have conservative migration between nearest neighbors and the reproduction within each deme is Wright-Fisher.
            \item With probability $\frac{\gamma}{2N}$ we have an extreme reproductive event. Choose a deme $w^*$ uniformly at random from $V$ and sample $r \sim \nu$. A deme $v^*$ is then selected uniformly at random from the collection of demes $w$ with $\norm{w-w^*} < r\rho$. We sample $(\mathcal{V}, m)$ as follows: 
            \begin{itemize}
                \item For all demes $w$ with $\norm{w-w^*} \geq r\rho$ we set
                    \begin{align*}
                        m_{(w,w')} &= 0 \text{ for all } w'\neq w \text{ and }\\
                        \mathcal{V}^w &\sim \text{Multinomial}\lp{N; \frac{2}{N(N-1)}, \ldots, \frac{2}{N(N-1)}}.
                    \end{align*}
                    That is, demes far enough from $w^*$ have no migrations to or from them and simply reproduce via Wright-Fisher dynamics.
                \item For a deme $w \neq v^*$ with $\norm{w-w^*} < r\rho$ we sample the offspring distribution and migrations by
                    \begin{align*}
                        m_{(w,w')} &\sim 
                        \begin{cases}
                            \delta_0, \text{ if } w'\neq v^*\\
                            \text{Beta}(a_w,b_w), \text{ if } w' = v^*
                        \end{cases}
                        \text{\quad and }\\
                        \mathcal{V}^w &\sim \text{Multinomial}\lp{N; \frac{2}{N^*(w)(N^*(w)-1)}, \ldots, \frac{2}{N^*(w)(N^*(w)-1)}}.
                    \end{align*}
                    That is, demes within distance $r\rho$ of $w^*$ migrate according to a Beta distribution to $v^*$ and those individuals that remain in the deme reproduce according to Wright-Fisher dynamics.
                \item Inside the deme $v^*$ we set
                    \begin{align*}
                        m_{(v^*,w)} = 0 &, \text{ for all } w \neq v^*, \text{ and}\\
                        \widetilde{\mathcal{V}}^{v^*} &\sim \widetilde{\Phi}_N,
                    \end{align*}
                    where we assume that $\widetilde{\Phi}_N$ converges weakly in $\Delta$ to a measure $\Xi$. 
            \end{itemize}
        \end{itemize}

        A realization of the large event dynamics of this model are shown in Figure~\ref{F: fleming_viot_example}.
        \begin{figure}
            \centering
            \includegraphics[width=0.7\linewidth]{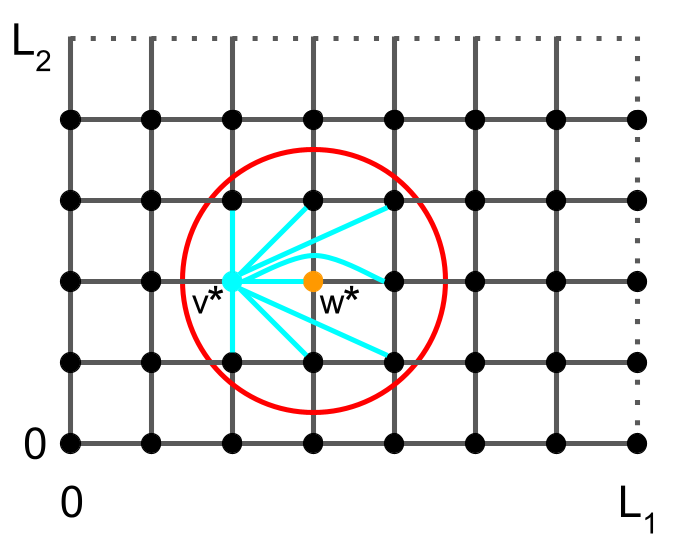}
            \caption{In the figure we see a discretized torus $\Z^2 /\langle L_1,L_2\rangle_\Z$ with $L_1 = 7$ and $L_2 = 5$. At each point on the discrete torus, drawn as a solid ball, is a deme of the population. Emanating from each deme are its nearest neighor edges. A point $w^*$, colored orange, and a radius $r$ is chosen uniformly at random. The ball of radius $r\rho$ centered at $w^*$ is drawn in red. A point $v^*$ is then selected uniformly at random from that ball and drawn in cyan. All of the demes within the ball of radius $r\rho$ from $w^*$ then send a Beta-distributed proportion of their population to $v^*$. These migrations are shown in cyan.}
            \label{F: fleming_viot_example}
        \end{figure}

        We describe now the limiting objects of this model as follows:
        For any $v \in V$ and $r \in \R_+$ we define $\widehat{\Phi}_{v,r}$ to be the unique product measure on $\Delta^V \times [0,1]^E$ such that $\widehat{\Phi}_{v,r}$ satisfies the marginal distributions
        \begin{align*}
            \lp{\widehat{\Phi}_{v,r}}_e &= 
            \begin{cases}
                \text{Beta} (a_w, b_w) &, \text{ if } e = (w,w'), w' = v, \text{ and } \norm{w-v} < r\rho\\
                0 &, \text{ otherwise } 
            \end{cases} \text{\quad \quad and }\\
            \lp{\widehat{\Phi}_{v,r}}_w &=
            \begin{cases}
                \delta_0 &, \text{ if } w \neq v\\
                \Xi &, \text{ if } w = v
            \end{cases}.
        \end{align*}
        Let
        \begin{equation*}
            \Phi_{\rm FV} := \frac{\lambda}{1+\frac{\lambda}{|V|} \int_{\Delta} \langle x,x\rangle d\Xi(x)}\sum_{v \in V} \frac{1}{|V|} \int_{\R_+} \widehat{\Phi}_{v,r\rho} d\nu(r).
        \end{equation*}
        Let $\kappa_{\rm FV} = \lp{\kappa_v}_{v \in V}$ be defined by
        \begin{equation*}
            \kappa_v = 1 - \frac{\frac{\lambda}{|V|} \int_{\Delta} \langle x,x\rangle d\Xi(x)}{1+\frac{\lambda}{|V|} \int_{\Delta} \langle x,x\rangle d\Xi(x)},
        \end{equation*}
        and $\mu_{\rm FV} = \lp{\mu_e}_{e \in E}$ by
        \begin{equation*}
            \mu_{e} = \sigma \frac{\frac{\lambda}{|V|} \int_{\Delta} \langle x,x\rangle d\Xi(x)}{1+\frac{\lambda}{|V|} \int_{\Delta} \langle x,x\rangle d\Xi(x)}.
        \end{equation*}

        The convergence of this model is then described by the following proposition.
        
        \begin{proposition}\label{P: xi_fleming_viot}
            Let $\overline{\chi}^{N,n}$, $\mathcal{G}_N$ denote the ancestral process and pedigree associated to the discrete approximation of the spatial $\Xi$ Fleming-Viot process described above. Suppose that, as $N$ goes to infinity, Assumption~\ref{A: IC} holds. 
            Let $\chi^n_{\rm FV}$ denote a $\Psi_{\rm FV}$-driven $(\kappa_{\rm FV},\mu_{\rm FV})$-$n$-coalescent with intensity measure $dt \otimes d(h_V)_*\Phi_{\rm FV}$
            Then
            \begin{equation*}
                \PP\lp{cd_V\lp{\overline{\chi}^{N,n}} \in \cdot \,\mid\, \mathcal{A}_N} \toL \PP\lp{\chi^n_{\rm FV} \in \cdot \,\mid\, \Psi_{\rm FV}}.
            \end{equation*}
        \end{proposition}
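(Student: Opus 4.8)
The plan is to verify, one at a time, each of the seven hypotheses of Theorem~\ref{T: quenched} for this model and then invoke the theorem directly, exactly as in the proofs of Propositions~\ref{P: diamantidis_model} and \ref{P: beta_migration}. The whole argument is a computation of the coalescence timescale $c_N^v$ and an identification of the vague limit $\frac{1}{c_N^{v_0}}\Phi_N \to \Phi_{\rm FV}$. First I would compute $c_N^v$. Conditioning on whether the generation is ordinary (probability $1-\tfrac{\gamma}{2N}$) or extreme (probability $\tfrac{\gamma}{2N}$), and within an extreme generation on whether deme $v$ is the selected sink $v^*$, the dominant ordinary Wright-Fisher contribution gives $\tfrac{1}{2N}$ exactly as in Proposition~\ref{P: two_deme_WF}, while the extreme contribution in deme $v^*$ contributes $\tfrac{\gamma}{2N}\cdot\tfrac{1}{|V|}\cdot\tfrac12\int_\Delta\langle x,x\rangle\,d\Xi(x)$ at leading order (the factor $\tfrac{1}{|V|}$ being the chance that $v$ is the chosen sink, and $\widetilde\Phi_N\to\Xi$ controlling the offspring variance). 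This yields $c_N^v \sim \tfrac{1}{2N}\bigl(1+\tfrac{\lambda}{|V|}\int_\Delta\langle x,x\rangle\,d\Xi\bigr)$ with $\lambda=\gamma$, giving Assumptions~\ref{A: continuous} and \ref{A: comparable} with $c(v)\equiv 1$ by symmetry, and already explaining the normalizing denominators in $\kappa_{\rm FV}$, $\mu_{\rm FV}$, and $\Phi_{\rm FV}$.

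Next I would identify the vague limit of $\frac{1}{c_N^{v_0}}\Phi_N$ on $\Delta^V\times[0,1]^E\setminus\{\mathbf{0}_{V,E}\}$. In an ordinary generation the Wright-Fisher offspring frequencies vanish in the limit by \cite[Proposition 2.1]{birkner2018}, and the nearest-neighbor binomial migrations are $O_\PP(1/N)$ and hence contribute only to the neutral rate $\mu_e$, not to $\Phi$. The only surviving mass comes from extreme generations, which occur at rate $\tfrac{\gamma}{2N}\big/c_N^{v_0} \to \lambda/(1+\tfrac{\lambda}{|V|}\int\langle x,x\rangle\,d\Xi)$. Conditioning on the uniformly chosen center $w^*$ and radius $r\sim\nu$, the resulting configuration of a macroscopic offspring event at the uniformly chosen sink $v^*$ together with Beta-distributed migrations into $v^*$ from every deme within distance $r\rho$ is precisely the measure $\widehat\Phi_{v^*,r\rho}$. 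Averaging over the uniform choice of $v^*$ (the $\tfrac{1}{|V|}\sum_{v\in V}$) and over $r$ (the $\int_{\R_+}\cdots\,d\nu(r)$) reproduces $\Phi_{\rm FV}$. The main care here is bookkeeping: one must check that the choice of $v^*$ uniformly from the ball around $w^*$, composed with averaging over $w^*$, is consistent with indexing the limit directly by the sink $v$ and radius $r\rho$ as written in $\widehat\Phi_{v,r}$; this is a change-of-variables/Fubini argument that I would state but not belabor.

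With $\Phi=\Phi_{\rm FV}$ in hand, Assumption~\ref{A: rarity} holds, and the neutral rates follow from the defining formulas: $\mu_e = \bar\mu_e - \int m_e\,d(\Phi_{\rm FV})_e$ gives the stated $\mu_{\rm FV}$ (the raw migration rate $\sigma$ being diluted by the fraction of coalescence timescale consumed by extreme events), and $\kappa_v = c(v) - \tfrac12\int\langle x^v,x^v\rangle\,d(\Phi_{\rm FV})_v$ gives the stated $\kappa_{\rm FV}$; these confirm Assumptions~\ref{A: migration_convergence} and the defining formulas simultaneously. Assumption~\ref{A: integrability} follows from Remark~\ref{R: integrability_condition}, since $\int_\Delta\langle x,x\rangle\,d\Xi(x)<\infty$ (as $\Xi$ is finite on the compact $\Delta$) bounds the offspring marginal and the migration marginals are finite Beta mixtures. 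Assumption~\ref{A: no_total_migrations} holds because the total outflow $\sum_w m_{(v,w)}$ from any deme is either a single nearest-neighbor binomial term (at most a handful of $O(1/N)$ terms) or a single $\mathrm{Beta}(a_v,b_v)$ term, neither of which places an atom at $1$. All seven hypotheses verified, Theorem~\ref{T: quenched} yields the claimed convergence. The main obstacle I anticipate is purely combinatorial rather than analytic: correctly accounting for the two-layer uniform randomization (center $w^*$ then sink $v^*$ within radius $r\rho$) so that the limiting intensity is indexed cleanly by the sink, and confirming that simultaneous extreme events in the relevant demes occur with probability $o(c_N)$ and so do not contribute, analogous to the $\varepsilon_N^2=o(c_N)$ observation in Proposition~\ref{P: diamantidis_model}.
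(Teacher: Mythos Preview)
Your proposal is correct and follows essentially the same route as the paper's proof: both compute $c_N^v$ by conditioning on ordinary versus extreme generations and invoke torus symmetry for $c(v)\equiv 1$, identify the vague limit $\Phi_{\rm FV}$ by discarding the Wright-Fisher part via \cite[Proposition~2.1]{birkner2018} and reading off the product structure of the extreme-event law, and then check Assumptions~\ref{A: migration_convergence}, \ref{A: integrability} (via Remark~\ref{R: integrability_condition}), and \ref{A: no_total_migrations} before applying Theorem~\ref{T: quenched}. Your concern about simultaneous extreme events is unnecessary here since the extreme-event indicator is global rather than per-deme, and your flagged $w^*$-versus-$v^*$ bookkeeping is a genuine subtlety that the paper's proof also elides.
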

        \begin{proof}
            We verify Assumptions~\ref{A: continuous}, \ref{A: comparable}, \ref{A: rarity},
            \ref{A: migration_convergence}, \ref{A: integrability}, and \ref{A: no_total_migrations} for the discrete model with the
            choices of $\Phi,\kappa,\mu$ stated in the proposition, and then apply Theorem~\ref{T: quenched}.
            
            We now check Assumptions~\ref{A: continuous} and \ref{A: comparable}. By translation invariance of
            the torus and the fact that the extreme-event center $v^*$ is uniform on $V$, the joint law of the reproduction and
            migration mechanism is the same in every deme, and therefore $c_N^v=c_N^{v_0}=: c_N$ for all $v\in V$, so
            Assumption~\ref{A: comparable} holds with $c(v)\equiv 1$ once we show $c_N\to 0$. To see this, consider two genes
            sampled uniformly from two distinct individuals in deme $v_0$ and condition on neither migrating. In a baseline generation,
            reproduction in deme $v_0$ is Wright-Fisher, so the one-step coalescence probability is $1/(2N)+o(1/N)$. In an extreme
            generation, which occurs with probability $\lambda/(2N)$, the only additional contribution to coalescence in deme $v_0$
            beyond the Wright-Fisher $O(1/N)$ term is when $v^*=v_0$, which occurs with probability $1/|V|$, and then two sampled
            genes coalesce with probability asymptotically $\int_{\Delta}\langle x,x\rangle\,d\Xi(x)$ under the limiting offspring law.
            Combining these contributions yields
            \begin{equation*}
            c_N
            =
            \frac{1}{2N}\lp{1+\frac{\lambda}{|V|}\int_{\Delta}\langle x,x\rangle\,d\Xi(x)}+o\lp{\frac{1}{N}},
            \end{equation*}
            so in particular $c_N^{v_0}\to 0$, proving Assumptions~\ref{A: continuous} and Assumption~\ref{A: comparable} hold with
            $c(v)\equiv 1$.
            
            We now check Assumption~\ref{A: rarity} with the $\Phi$ given in the proposition. By \cite[Proposition 2.1]{birkner2018} we know the contribution of the Wright-Fisher components to $\Phi_N$ are negligible in the $c_N$ rescaling.
            In an extreme generation, conditional on $(v^*,r)$, the construction is product across coordinates, with the only
            nontrivial offspring marginal at deme $v^*$ given by $\widetilde{\Phi}_N$ and all migration marginals specified in the
            model; since $\widetilde{\Phi}_N\stackrel{w}{\to}\Xi$ weakly in $\Delta$ the assumption follows.
            
            Assumption~\ref{A: migration_convergence} follows with the described migration rates by a direct computation.
            
            To demonstration Assumption~\ref{A: integrability}, it suffices by remark~\ref{R: integrability_condition} to show that the marginals $\Phi_e$ and $\Phi_v$ satisfy
            \begin{equation}\label{E: regularity}
                \int_{[0,1]} m_e d\Phi_e(m_e) < \infty \text{\quad and \quad } \int_{\Delta} \langle x^v,x^v \rangle d\Phi_v(x^v).
            \end{equation}
            Under $\Phi$, the only nontrivial offspring marginal is $\Xi$ at the affected deme and all other offspring marginals are
            $\delta_0$, while the only nontrivial migration marginals are Beta$(a_w,b_w)$. By assumption
            $\int_{\Delta}\langle x,x\rangle\,d\Xi(x)<\infty$, and Beta laws have finite first moments Equation~\eqref{E: regularity} follows, and Assumption~\ref{A: integrability} with it.
            
            We now check Assumption~\ref{A: no_total_migrations}. Under $\Phi$, each nonzero migration
            coordinate $m_e$ is either $0$ or Beta$(a_w,b_w)$, hence takes values in $[0,1)$ almost surely and has no atom at $1$.
            Therefore, for every deme $v$, the law of $\sum_{e=(v,w)\in E} m_e$ under $\Phi$ has no atom at $1$, proving
            Assumption~\ref{A: no_total_migrations}.
            
            We have verified the assumptions of Theorem~\ref{T: quenched} for the present
            model with the $\Phi$ given in the proposition. The claim thus follows.
        \end{proof}

        Since we describe this model as a discrete approximation of a $\Xi$ Fleming-Viot process, this should be justified by taking a suitable scaling limit of the discrete torus. Since this is outside the purview of the present work, we outline roughly the scaling limit one would take in Section~\ref{S: discussion}.

\section{\texorpdfstring{Existence and coupling of $\Psi$-driven $(\kappa,\mu)$-$n$-coalescents}{}}\label{SS: driven_coupling}

    We give a pathwise construction of $\Psi$-driven $(\kappa,\mu)$-$n$-coalescents that avoids any ambiguity coming from the fact that $\Psi$ may have dense atom times. The construction proceeds by enriching $\Psi$ with i.i.d.\ auxiliary marks and then retaining only the \emph{effective} events that can actually move the chain, in the spirit of \cite[Section~5]{abfw25}.

    \subsection*{Marked driving process and state-dependent thinnings}

    Let $\Psi$ be a Poisson point process on $\R_+ \times \lp{\Delta^V \times [0,1]^E}$ with intensity $dt \otimes d\Phi$, where $\Phi$ satisfies Assumption~\ref{A: integrability}, i.e.\ for each $\xi \in \En(V)$,
    \begin{equation*}
        \int_{\Delta^V \times [0,1]^E} \lp{1-q_n(x,m)(\xi,\xi)}\,d\Phi\lp{x,m} < \infty.
    \end{equation*}
    Since the time-marginal of $dt$ is nonatomic, $\Psi$ has almost surely no simultaneous atom times.

    We now attach auxiliary marks to each atom of $\Psi$. Let $\widehat{\Psi}$ be a Poisson point process on
    \begin{equation*}
        \R_+ \times \lp{\Delta^V \times [0,1]^E} \times [0,1]^{\En(V)} \times [0,1]
    \end{equation*}
    with intensity measure
    \begin{equation*}
        dt \otimes d\Phi\lp{x,m} \otimes \lambda_{\En(V)}(du) \otimes \lambda(dv),
    \end{equation*}
    where $\lambda_{\En(V)}$ denotes Lebesgue measure on $[0,1]^{\En(V)}$ and $\lambda$ denotes Lebesgue measure on $[0,1]$.
    We write atoms of $\widehat{\Psi}$ as $(t,(x,m),u,v)$, where $u=\lp{u^\xi}_{\xi\in\En(V)}$ is a vector of auxiliary points uniformly sampled on the unit interval.

    For each $\xi\in\En(V)$ define the state-dependent thinning
    \begin{equation}\label{E: Psi_xi_def}
        \Psi_\xi
        :=
        \lcb{(t,(x,m)):\exists\,u,v\ \text{s.t.}\ (t,(x,m),u,v)\in\widehat{\Psi}
        \ \text{and}\ u^\xi \le 1-q_n(x,m)(\xi,\xi)}.
    \end{equation}

    \begin{lemma}[Poisson thinnings and local finiteness]\label{L: thinning_local_finite}
        For each $\xi\in\En(V)$, $\Psi_\xi$ is a Poisson point process on $\R_+\times \lp{\Delta^V \times [0,1]^E}$ with intensity measure
        \begin{equation*}
            dt \otimes \lp{1-q_n(x,m)(\xi,\xi)}\,d\Phi(x,m).
        \end{equation*}
        In particular, for every $T<\infty$,
        \begin{equation*}
            \#\lp{\Psi_\xi \cap \lp{[0,T]\times \lp{\Delta^V \times [0,1]^E}}}<\infty
            \qquad\text{a.s.}
        \end{equation*}
        Moreover, since $\En(V)$ is finite for fixed $n$, the union
        \begin{equation*}
            \Psi^\star := \bigcup_{\xi\in\En(V)} \Psi_\xi
        \end{equation*}
        has almost surely finitely many atoms in $[0,T]\times \lp{\Delta^V \times [0,1]^E}$ for each $T<\infty$.
    \end{lemma}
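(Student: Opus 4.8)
The plan is to realize $\Psi_\xi$ as the image of a \emph{restriction} of the enriched process $\widehat\Psi$ under the coordinate projection $\pi:(t,(x,m),u,v)\mapsto (t,(x,m))$, and then to invoke the two standard structural theorems for Poisson point processes: the restriction theorem and the mapping (pushforward) theorem. No genuinely hard step is anticipated; the whole argument is a routine application of Poisson theory, and the only place the hypotheses enter nontrivially is the appeal to Assumption~\ref{A: integrability} to control the retained intensity.

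First I would fix $\xi\in\En(V)$ and set $A_\xi:=\lcb{(t,(x,m),u,v): u^\xi\le 1-q_n(x,m)(\xi,\xi)}$. Since $q_n(x,m)(\xi,\xi)\in[0,1]$ is a genuine probability (the paintbox probability that $\xi$ returns to itself), $1-q_n(x,m)(\xi,\xi)\in[0,1]$ and $A_\xi$ is a measurable set. By the restriction theorem, $\widehat\Psi\cap A_\xi$ is a Poisson point process whose intensity is the restriction to $A_\xi$ of $dt\otimes d\Phi\otimes\lambda_{\En(V)}\otimes\lambda$.

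Next I would push this restricted process forward under $\pi$, so that $\Psi_\xi=\pi\lp{\widehat\Psi\cap A_\xi}$. By the mapping theorem, $\Psi_\xi$ is a Poisson point process whose intensity is the $\pi$-pushforward of the restricted intensity. The only computation is to integrate out the marks $(u,v)$: integrating $\lambda(dv)$ over $[0,1]$ contributes a factor $1$, while integrating $\lambda_{\En(V)}(du)$ over $\lcb{u:u^\xi\le 1-q_n(x,m)(\xi,\xi)}$ leaves each coordinate $u^\eta$ with $\eta\ne\xi$ free—contributing $1$ apiece—and restricts the $\xi$-coordinate to an interval of Lebesgue length $1-q_n(x,m)(\xi,\xi)$. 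Hence the pushforward intensity equals $dt\otimes\lp{1-q_n(x,m)(\xi,\xi)}\,d\Phi(x,m)$, as claimed. The one hypothesis to verify for the mapping theorem is $\sigma$-finiteness of this image measure, which holds because $\Phi$ is $\sigma$-finite and the time coordinate carries Lebesgue measure.

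For local finiteness, I would compute the total mass of the intensity of $\Psi_\xi$ over the slab $[0,T]\times\lp{\Delta^V\times[0,1]^E}$, namely $T\int_{\Delta^V\times[0,1]^E}\lp{1-q_n(x,m)(\xi,\xi)}\,d\Phi(x,m)$, which is finite precisely by Assumption~\ref{A: integrability} (indeed uniformly in $\xi$). A Poisson point process with finite intensity on a set has almost surely finitely many atoms there, which gives the second claim. The union statement is then immediate: $\Psi^\star=\bigcup_{\xi\in\En(V)}\Psi_\xi$ is a finite union since $\En(V)$ is finite for fixed $n$, and a finite union of sets each carrying finitely many atoms in the slab carries finitely many atoms there.
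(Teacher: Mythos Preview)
Your proposal is correct and takes essentially the same approach as the paper. The paper's own proof is extremely terse---it simply asserts that finiteness on compacts follows from Assumption~\ref{A: integrability} and that $\Psi^\star$ is a finite superposition of locally finite point processes---whereas you have spelled out the restriction and mapping arguments for the intensity computation that the paper leaves implicit.
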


    \begin{proof}
        The finiteness on compacts follows from Assumption~\ref{A: integrability} and the fact that a Poisson random variable with finite mean is almost surely finite. The final claim follows because $\En(V)$ is finite, hence $\Psi^\star$ is a finite superposition of locally finite point processes.
    \end{proof}

    Fix an enumeration $\En(V)=\{\xi^{(1)},\dots,\xi^{(M)}\}$.
    For each $(x,m)\in \Delta^V \times [0,1]^E$ and each $\xi\in\En(V)$, define a measurable map
    \begin{equation*}
        J_{x,m,\xi}:[0,1]\to \En(V)\setminus\{\xi\}
    \end{equation*}
    by setting, for $v\in[0,1]$,
    \begin{equation*}
        J_{x,m,\xi}(v) = \xi^{(j)},
    \end{equation*}
    where $j$ is the least index such that $\xi^{(j)}\neq \xi$ and
    \begin{equation*}
        v \le \sum_{\substack{1\le k\le j\\ \xi^{(k)}\neq \xi}}
        \frac{q_n(x,m)(\xi,\xi^{(k)})}{1-q_n(x,m)(\xi,\xi)}.
    \end{equation*}
    \noindent
    If $q_n(x,m)(\xi,\xi)=1$, then $1-q_n(x,m)(\xi,\xi)=0$ and $J_{x,m,\xi}$ may be defined arbitrarily.

    Using $J$, define the random map $F_{x,m,u,v}:\En(V)\to\En(V)$ by
    \begin{equation}\label{E: jump_map_def}
        F_{x,m,u,v}(\xi)
        :=
        \begin{cases}
            \xi, & u^\xi > 1-q_n(x,m)(\xi,\xi),\\
            J_{x,m,\xi}(v), & u^\xi \le 1-q_n(x,m)(\xi,\xi).
        \end{cases}
    \end{equation}

    \subsection*{Construction of the $\Psi$-driven $(\kappa,\mu)$-$n$-coalescent}

    Let $K_n(\kappa)+M_n(\mu)$ denote the generator of the $(0,\kappa,\mu)$-$n$-coalescent on the finite state space $\En(V)$.
    Let $\lp{Y^{(k)}}_{k\ge 0}$ be i.i.d.\ càdlàg Markov processes with generator $K_n(\kappa)+M_n(\mu)$, independent of $\widehat{\Psi}$ and each other.

    By Lemma~\ref{L: thinning_local_finite}, we may list the atom times of $\Psi^\star$ increasingly as
    \begin{equation*}
        0<\tau_1<\tau_2<\cdots,
        \qquad \tau_k\to\infty.
    \end{equation*}
    For each $k$, denote the unique atom of $\widehat{\Psi}$ at time $\tau_k$ by
    \begin{equation*}
        \lp{\tau_k,(x_k,m_k),u_k,v_k}.
    \end{equation*}

    \begin{definition}[$\Psi$-driven $(\kappa,\mu)$-$n$-coalescent, pathwise]\label{D: Psi_driven_rigorous}
        Fix an initial state $\chi^n(0)=\xi_0\in\En(V)$.
        Define $\chi^n$ recursively as follows.

        Set $\chi^n(0)=\xi_0$ and, for $t\in[0,\tau_1)$, let
        \begin{equation*}
            \chi^n(t) := Y^{(0)}(t).
        \end{equation*}
        Suppose $\chi^n$ has been defined on $[0,\tau_k)$.
        Define the left limit $\chi^n(\tau_k-):=\lim_{t\uparrow\tau_k}\chi^n(t)$, and set
        \begin{equation*}
            \chi^n(\tau_k) := F_{x_k,m_k,u_k,v_k}\lp{\chi^n(\tau_k-)}.
        \end{equation*}
        For $t\in[\tau_k,\tau_{k+1})$, define
        \begin{equation*}
            \chi^n(t) := Y^{(k)}(t-\tau_k)
            \quad\text{with initial condition } Y^{(k)}(0)=\chi^n(\tau_k).
        \end{equation*}
    \end{definition}

    \begin{remark}\label{R: conditional_kernel}
        By construction, conditional on $(x,m)$ at an atom time and on the pre-jump state $\xi$,
        the post-jump state has distribution $q_n(x,m)(\xi,\cdot)$.
        Indeed, \eqref{E: jump_map_def} yields
        \begin{equation*}
            \PP\lp{\chi^n(\tau_k)=\xi \,\,\mid\, \, \chi^n(\tau_k-)=\xi,(x_k,m_k)}=q_n(x_k,m_k)(\xi,\xi),
        \end{equation*}
        and, for $\eta\neq \xi$,
        \begin{equation*}
            \PP\lp{\chi^n(\tau_k)=\eta \,\,\mid\, \, \chi^n(\tau_k-)=\xi,(x_k,m_k)}=q_n(x_k,m_k)(\xi,\eta).
        \end{equation*}
        The role of the thinning is solely to ensure that the set of times at which a change is possible is locally finite; it does not alter the induced transition kernel at those times.
    \end{remark}

    \subsection*{Coupling $l$ conditionally independent copies given $\Psi$}

    We now construct $l$ copies $\lp{\chi_i^n}_{i=1}^l$ on the same driving $\Psi$ such that, conditional on $\Psi$, they are independent.

    Let $\widehat{\Psi}^{(l)}$ be a  Poisson point process on
    \begin{equation*}
        \R_+ \times \lp{\Delta^V \times [0,1]^E} \times \lp{[0,1]^{\En(V)}\times[0,1]}^l
    \end{equation*}
    with intensity
    \begin{equation*}
        dt \otimes d\Phi\lp{(x,m)} \otimes \lp{\lambda_{\En(V)}(du)\otimes\lambda(dv)}^{\otimes l}.
    \end{equation*}
    Write its atoms as $\lp{t,(x,m),\lp{u_i,v_i}_{i=1}^l}$.
    For each $i$ define the coordinatewise jump map $F^{(i)}_{x,m,u_i,v_i}$ by \eqref{E: jump_map_def}.

    Let $\lp{Y^{(k)}_i}_{k\ge 0,\, i\in[l]}$ be i.i.d.\ Markov processes with generator $R$, independent of $\widehat{\Psi}^{(l)}$.
    Define each $\chi_i^n$ by the same interlacing construction as in Definition~\ref{D: Psi_driven_rigorous}, but using the marks $\lp{u_i,v_i}$ and background chains $\lp{Y^{(k)}_i}$.

    \begin{lemma}[Conditional independence given $\Psi$]\label{L: cond_indep}
        With the above construction, the processes $\lp{\chi_i^n}_{i=1}^l$ are i.i.d.\ with the law of the $\Psi$-driven $(\kappa,\mu)$-$n$-coalescent, and are conditionally independent given $\Psi$.
    \end{lemma}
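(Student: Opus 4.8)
The plan is to read the joint construction through the marking theorem for Poisson processes and then separate the shared randomness $\Psi$ from the copy-specific randomness. First I would observe that the intensity of $\widehat{\Psi}^{(l)}$ factorizes as $dt \otimes d\Phi(x,m) \otimes (\lambda_{\En(V)}(du)\otimes\lambda(dv))^{\otimes l}$, so that by the marking theorem (see, e.g., \cite{kallenberg17}) $\widehat{\Psi}^{(l)}$ is precisely the driving process $\Psi$—the projection onto $\R_+ \times (\Delta^V \times [0,1]^E)$, with intensity $dt \otimes d\Phi$—decorated at each atom by an independent mark $(u_i,v_i)_{i=1}^l$ whose law is the product $(\lambda_{\En(V)}\otimes\lambda)^{\otimes l}$. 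In particular, across atoms the marks are independent, and at each fixed atom the $l$ pairs $(u_i,v_i)$ are i.i.d.

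Next I would establish the common marginal law. Fixing $i$ and retaining from $\widehat{\Psi}^{(l)}$ only the ground point $(t,(x,m))$ together with the single mark $(u_i,v_i)$, the projection property of Poisson processes shows this thinned process is again Poisson with intensity $dt \otimes d\Phi \otimes \lambda_{\En(V)} \otimes \lambda$, i.e.\ it has exactly the law of the marked driving process $\widehat{\Psi}$ from the single-copy construction. Since $\chi_i^n$ is built from this process and the independent background chains $(Y_i^{(k)})_k$ by the verbatim recursion of Definition~\ref{D: Psi_driven_rigorous}—using its own state-dependent thinning $\Psi^\star_i$ to extract the locally finite effective atom times guaranteed by Lemma~\ref{L: thinning_local_finite}—each $\chi_i^n$ is a $\Psi$-driven $(\kappa,\mu)$-$n$-coalescent, with post-jump kernel $q_n$ at atom times by Remark~\ref{R: conditional_kernel}. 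This simultaneously gives that the $\chi_i^n$ are identically distributed and that their common law is the desired one.

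The conditional independence is then a clean consequence of the marking picture. Conditioning on $\Psi$, I would write $\chi_i^n = \Xi(\Psi,\mathcal{M}_i,(Y_i^{(k)})_k)$ for a single measurable pathwise map $\Xi$, where $\mathcal{M}_i := \{(u_i,v_i): \text{atoms of }\Psi\}$ is the field of $i$-th marks; crucially, the recursion for copy $i$ consults only its own marks and background chains, so $\chi_i^n$ genuinely does not depend on $(\mathcal{M}_j,(Y_j^{(k)})_k)$ for $j\neq i$. By the marking theorem the fields $\mathcal{M}_1,\dots,\mathcal{M}_l$ are conditionally i.i.d.\ given $\Psi$, and the chains $(Y_i^{(k)})_k$ are independent across $i$ and independent of $\widehat{\Psi}^{(l)}$; hence the triples $(\Psi,\mathcal{M}_i,(Y_i^{(k)})_k)$ share only $\Psi$ and are conditionally independent given $\Psi$. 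Applying the common map $\Xi$ therefore makes $(\chi_i^n)_{i=1}^l$ conditionally i.i.d.\ given $\Psi$ with the stated law, which is the assertion.

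The only genuine subtlety—and the step I would be most careful about—is the measurability and copy-wise decoupling of the pathwise map $\Xi$ in the presence of the possibly dense atom times of $\Psi$. This is exactly where the state-dependent thinnings do their work: each copy extracts a different, but almost surely locally finite, set of effective atom times $\Psi^\star_i$ from its own marks $\mathcal{M}_i$, so that the recursion of Definition~\ref{D: Psi_driven_rigorous} is well-posed copy by copy and manifestly factorizes through $(\Psi,\mathcal{M}_i,(Y_i^{(k)})_k)$ alone. Once this factorization is made explicit, no estimates are needed; the result is purely structural.
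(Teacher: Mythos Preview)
Your proof is correct and follows essentially the same approach as the paper: both arguments observe that, conditional on $\Psi$, the copy-specific marks $(u_i,v_i)$ and background chains $(Y_i^{(k)})_k$ are independent across $i$, and that each $\chi_i^n$ is a fixed measurable functional of $\Psi$ together with its own copy-specific randomness. The paper's proof compresses this into two sentences (``by construction''), whereas you unpack it via the marking theorem and an explicit factorization $\chi_i^n = \Xi(\Psi,\mathcal{M}_i,(Y_i^{(k)})_k)$, and you are more careful about the measurability issue arising from the possibly dense atom times---this extra care is welcome but not a different idea.
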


    \begin{proof}
        Given $\Psi$, the marks $\lp{u_i,v_i}_{i\in[l]}$ and the background chains $\lp{Y_i^{(k)}}$ are independent across $i$ by construction, and each coordinate has the same conditional transition mechanism described in Remark~\ref{R: conditional_kernel}. This yields both the correct marginal law and conditional independence.
    \end{proof}

    With this construction we are now able to prove Lemma~\ref{L: driven_coupling}.
    \begin{proof}[Proof of Lemma~\ref{L: driven_coupling}]
        Consider the $l$-tuple process $\lp{\chi_i^n}_{i=1}^l$ taking values in $\En(V)^l$.
        Between atom times of $\widehat{\Psi}^{(l)}$ it evolves as $l$ independent $(0,\kappa,\mu)$-$n$-coalescents, hence has generator $R$ by our earlier definition of $R$ on $\En(V)^l$.

        Fix an atom time $t$ of $\Psi$ with mark $(x,m)$ and pre-jump state $\vec{\xi}\in\En(V)^l$.
        By Remark~\ref{R: conditional_kernel} and independence of the coordinate marks $\lp{u_i,v_i}$,
        conditional on $\vec{\chi}^n(t-)=\vec{\xi}$ and on $(x,m)$, the post-jump state has transition kernel
        \begin{equation*}
            \PP\lp{\lp{\chi_i^n}_{i=1}^l(t)=\vec{\eta}\,\,\mid\, \, \lp{\chi_i^n}_{i=1}^l(t-)=\vec{\xi},(x,m)}
            =
            \prod_{i=1}^l q_n(x,m)(\xi_i,\eta_i)
            =: H_l(x,m)(\vec{\xi},\vec{\eta}).
        \end{equation*}
        In particular,
        \begin{equation*}
            \PP\lp{\lp{\chi_i^n}_{i=1}^l(t)=\vec{\xi}\,\,\mid\, \, \lp{\chi_i^n}_{i=1}^l(t-)=\vec{\xi},(x,m)}
            = H_l(x,m)(\vec{\xi},\vec{\xi})
            = \prod_{i=1}^l q_n(x,m)(\xi_i,\xi_i).
        \end{equation*}

        Let $f:\En(V)^l\to\R$ be bounded. By annealing over $\Psi$ we see that the generator of $\vec{\chi}^n$ is
        \begin{equation*}
            \mathcal{L} f(\vec{\xi})
            =
            R f(\vec{\xi})
            +
            \int_{\Delta^V \times [0,1]^E}
            \lp{\sum_{\vec{\eta}\in\En(V)^l} f(\vec{\eta})\,H_l(x,m)(\vec{\xi},\vec{\eta}) - f(\vec{\xi})}\,d\Phi\lp{x,m}.
        \end{equation*}
        Equivalently,
        \begin{equation*}
            \mathcal{L}
            =
            R
            +
            \int_{\Delta^V \times [0,1]^E} \lp{H_l(x,m)-I}\,d\Phi\lp{x,m},
        \end{equation*}
        which is the claimed form.
    \end{proof}

\section{Coupling coalescents on the pedigree}\label{S: coupling}

    In Section~\ref{SS: coupling}, we rigorously introduce the coupling of the conditionally independent realizations $\lp{\overline{\chi}_i^{N,n}}_{i=1}^l$ of $\overline{\chi}^{N,n}$ with respect to $\mathcal{A}_N$. This coupling is then used to rigorously establish the unproved lemmas of Section~\ref{S: proof_main}. The generic behavior of small jumps described in Lemma~\ref{L: small_jumps} is proven in Section~\ref{SS: small_jumps}. The behavior of large jumps described in Lemma~\ref{L: big_jumps} is proved in Section~\ref{SS: big_jumps}. Lemma~\ref{L: combining_scales}, which combines the behavior of the two scales is the subject of Section~\ref{SS: combining_scales}.

    \subsection{The coupling}\label{SS: coupling}

        In order to prove Theorem~\ref{T: quenched}, it is convenient to work with $l$ conditionally
        independent realizations of the time-rescaled ancestral process. To this end, we couple
        $\chi_1^{N,n}=\lp{\chi_1^{N,n}(k)}_{k\in\Z_+},\ldots,\chi_l^{N,n}=\lp{\chi_l^{N,n}(k)}_{k\in\Z_+}$
        so that they are conditionally independent with respect to $\mathcal{A}_N$.
        
        We proceed in the coupling akin to the description in \cite[Section 6.1]{abfw25}. The point is that,
        even if each copy is naturally described on $\En(V)$, the \emph{$l$-tuple} of copies requires additional
        bookkeeping. One must remember which ancestral diploid individuals are shared by different copies,
        since this is where the common pedigree creates correlations after averaging over $\mathcal{A}_N$.
        
        We first fix notation for the state space. For any set $M$, write $\mathcal{P}(M)$ for its power set and
        \begin{equation*}
            \mathcal{P}_{1,2}(M)
            \;:=\;
            \lb{A\in\mathcal{P}(M): \#A\in\{1,2\}}.
        \end{equation*}
        Define
        \begin{equation*}
            \mathcal{C}_{n,l}
            \;:=\;
            \mathcal{P}([n])^l \setminus \lb{(\varnothing,\ldots,\varnothing)}.
        \end{equation*}
        An element $C=(C^{(1)},\ldots,C^{(l)})\in\mathcal{C}_{n,l}$ represents the contents of $l$ unlinked loci on a single ancestral chromosome that
        carries ancestral material for copy $j$ from the block $C^{(j)}$. Any given component may be empty,
        but not all components are simultaneously empty.
        
        A single ancestral \emph{diploid individual} can carry one or two such chromosomes. We therefore encode
        an ancestral individual by an element $I\in\mathcal{P}_{1,2}(\mathcal{C}_{n,l})$. We also record the deme
        in which this individual lives. This leads to the state space
        \begin{equation*}
            \mathcal{H}_{n,l}(V)
            \;:=\;
            \lcb{
                \xi \subset V\times \mathcal{P}_{1,2}(\mathcal{C}_{n,l})
                \,:\,
                \bigsqcup_{(v,I)\in\xi}\;\bigsqcup_{C\in I} C^{(j)} = [n]
                \text{ for all } j\in[l]
            },
        \end{equation*}
        where $\sqcup$ denotes disjoint union, and we interpret $C^{(j)}=\varnothing$ as contributing nothing to
        the union. The disjoint-union conditions say that, in each copy $j$, every sample index appears in
        exactly one block across all chromosomes of all ancestral individuals.
        
        Given $\xi\in\mathcal{H}_{n,l}(V)$, the induced configuration of copy $j$ is obtained by projection to the
        $j$th component. More precisely, define maps
        \begin{equation*}
            pr_j:\mathcal{H}_{n,l}(V)\to \mathcal{S}_n(V),
            \qquad j\in[l],
        \end{equation*}
        as follows. For $\xi\in\mathcal{H}_{n,l}(V)$ and a fixed $(v,I)\in\xi$, the element $pr_j(\xi)$ contains,
        in deme $v$, the same grouping into one or two chromosomes, but with each chromosome
        $C=(C^{(1)},\ldots,C^{(l)})\in I$ replaced by the single block $C^{(j)}$ and with empty blocks removed.
        In particular, $pr_j(\xi)$ records which blocks of copy $j$ are in the same diploid individual, together
        with the deme label of that individual.
        
        We then apply the complete-dispersion map $cd_V$ from the single-locus state space to forget the
        short-lived pairing information within individuals. We extend $cd_V$ to $\mathcal{H}_{n,l}(V)$ by
        \begin{equation*}
            cd_V(\xi)
            \;:=\;
            \lp{cd_V\!\circ\,pr_1(\xi),\ldots,cd_V\!\circ\,pr_l(\xi)}
            \;\in\;
            \En(V)^l.
        \end{equation*}
        We call $\xi\in\mathcal{H}_{n,l}(V)$ completely dispersed if $\xi$ carries no information beyond its
        $l$ complete dispersions, in the sense that each chromosome carries ancestral material from exactly one
        copy. Equivalently, $\xi$ lies in the image of the canonical embedding
        \begin{equation*}
            \iota:\En(V)^l\to \mathcal{H}_{n,l}(V),
        \end{equation*}
        defined by placing every block of copy $j$ on its own chromosome of type
        $(\varnothing,\ldots,\varnothing,C,\varnothing,\ldots,\varnothing)$ (with $C$ in the $j$th slot) in the
        appropriate deme.
        
        We now construct the coupled processes. We work on a probability space that carries the pedigree
        $\mathcal{G}_N$ and $l$ independent collections of Mendelian randomness, one for each copy. Conditional
        on $\mathcal{A}_N$, we build $l$ families of ancestral lines by running the $l$ copies through the \emph{same}
        pedigree realisation and using the \emph{independent} Mendelian randomness to decide gene inheritance.
        This induces $l$ ancestral processes $\chi_1^{N,n},\ldots,\chi_l^{N,n}$ with values in $\En(V)$, and hence an
        $l$-tuple process $\lp{\chi_i^{N,n}}_{i=1}^l$.
        By construction,
        \begin{equation*}
            \PP\lp{\chi_1^{N,n} \in \cdot\,\middle|\,\mathcal{A}_N}
            \;=\;\cdots\;=\;
            \PP\lp{\chi_l^{N,n} \in \cdot\,\middle|\,\mathcal{A}_N},
            \qquad
            \chi_1^{N,n},\ldots,\chi_l^{N,n}\ \text{ are i.i.d.\ given }\ \mathcal{A}_N.
        \end{equation*}
        
        Finally, we lift $\lp{\chi_i^{N,n}}_{i = 1}^l$ to a process with values in $\mathcal{H}_{n,l}(V)$ by keeping, at
        each discrete time $k$, the collection of ancestral diploid individuals that carry ancestral material in at
        least one copy, together with the allocation of the $l$ copies to the (up to) two chromosomes of each such
        individual. This is exactly the additional structure that remembers how the different copies share
        individuals on the fixed pedigree.
        
        We then define $\overline{\chi}_1^{N,n},\ldots,\overline{\chi}_l^{N,n}$ to be the time-rescaled
        $\mathcal{D}\lp{\R_+,\Sn(V)}$-valued processes given by
        \begin{equation*}
            \overline{\chi}_j^{N,n}(t)
            \;:=\;
            \chi_j^{N,n}\!\lp{\lfloor t (c_N^{v_0})^{-1}\rfloor},
            \qquad t\ge 0,\ \ j\in[l].
        \end{equation*}

    \subsection{\texorpdfstring{Negligible time outside $\En(V)^l$ under the annealed law}{}}\label{SS: negligible_outside}

        In this section, we demonstrate that it suffices to characterize the one-step transition matrix for $\lp{\chi_i^{N,n}}_{i=1}^l$ from a totally dispersed element in $\En(V)^l$ to its projection under $cd_V$ in the next time-step, instead of in the more complicated coupling space $\mathcal{H}_{n,l}(V)$.
        We work under the annealed law throughout this subsection. Let
        \begin{equation*}
            \widehat{\chi}^{N,n,l}
            =
            \lp{\widehat{\chi}^{N,n,l}(k)}_{k\in\Z_+}
        \end{equation*}
        be the $\mathcal{H}_{n,l}(V)$-valued Markov chain constructed in Section~\ref{SS: coupling}.
        Write
        \begin{equation*}
            \mathsf{D}_{n,l}(V):=\iota\lp{\En(V)^l}\subset\mathcal{H}_{n,l}(V),
            \qquad
            \pi:=cd_V:\mathcal{H}_{n,l}(V)\to\En(V)^l.
        \end{equation*}
        Set $h_N:=c_N^{v_0}$ and $r_N:=h_N^{-1}$, and define
        \begin{equation*}
            k_N(t):=\lfloor t r_N\rfloor=\bigl\lfloor t/h_N\bigr\rfloor,
            \qquad t\ge 0.
        \end{equation*}
        
        \begin{definition}[Outside time]\label{D: outside_time}
            For $T>0$ define the discrete outside time up to $k\in\Z_+$ by
            \begin{equation*}
                \mathcal{O}_{N,l}(k)
                :=
                \sum_{j=0}^{k-1}\mathds{1}_{\{\widehat{\chi}^{N,n,l}(j)\notin \mathsf{D}_{n,l}(V)\}},
            \end{equation*}
            and its rescaled version on $[0,T]$ by
            \begin{equation*}
                \mathcal{O}_{N,l}(T)
                :=
                h_N\,\mathcal{O}_{N,l}\lp{k_N(T)}.
            \end{equation*}
        \end{definition}
        
        \begin{definition}[Last dispersed time]\label{D: last_dispersed_time}
            For $k\in\Z_+$ define
            \begin{equation*}
                \ell_{\mathsf{D}}(k)
                :=
                \max\{0\le j\le k:\widehat{\chi}^{N,n,l}(j)\in \mathsf{D}_{n,l}(V)\}.
            \end{equation*}
            For $t\ge 0$ define $\ell_{\mathsf{D}}(t):=\ell_{\mathsf{D}}(k_N(t))$.
        \end{definition}
        
        \begin{definition}[Trace process on $\En(V)^l$]\label{D: trace_process}
            Define the $\En(V)^l$-valued processes
            \begin{equation*}
                Z_{N,l}(t):=\pi\lp{\widehat{\chi}^{N,n,l}(k_N(t))},
                \qquad
                \widetilde Z_{N,l}(t):=\pi\lp{\widehat{\chi}^{N,n,l}(\ell_{\mathsf{D}}(t))},
                \qquad t\ge 0.
            \end{equation*}
        \end{definition}
        
        \begin{lemma}[Uniform geometric return to $\mathsf{D}_{n,l}(V)$]\label{L: geometric_return_D}
        There exists $p_{n,l}\in(0,1)$ depending only on $(n,l)$ such that for every $N$ and every
        $\xi\in\mathcal{H}_{n,l}(V)\setminus\mathsf{D}_{n,l}(V)$,
        \begin{equation*}
            \PP_\xi\lp{\tau_{\mathsf{D}}\le 1}
            \ge p_{n,l},
        \end{equation*}
        where
        \begin{equation*}
            \tau_{\mathsf{D}}
            :=
            \inf\{k\ge 0:\widehat{\chi}^{N,n,l}(k)\in\mathsf{D}_{n,l}(V)\}.
        \end{equation*}
        In particular, there exists $\rho\in(0,1)$ such that for all $r\in\Z_+$,
        \begin{equation*}
            \sup_{N}\sup_{\xi\in\mathcal{H}_{n,l}(V)}\PP_\xi(\tau_{\mathsf{D}}>r)\le \rho^{\,r},
            \qquad
            \sup_{N}\sup_{\xi\in\mathcal{H}_{n,l}(V)}\mathbb{E}_\xi[\tau_{\mathsf{D}}]<\infty.
        \end{equation*}
        \end{lemma}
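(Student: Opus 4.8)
The plan is to establish the one-step bound $\PP_\xi(\tau_{\mathsf{D}}\le 1)\ge p_{n,l}$ and then bootstrap it to the stated tail and mean bounds via the Markov property. Fix $\xi\in\mathcal{H}_{n,l}(V)\setminus\mathsf{D}_{n,l}(V)$; since $\xi\notin\mathsf{D}_{n,l}(V)$ we have $\tau_{\mathsf{D}}\ge 1$, so $\PP_\xi(\tau_{\mathsf{D}}\le 1)$ is exactly the probability that one backward step of the coupled chain $\widehat{\chi}^{N,n,l}$ lands in $\mathsf{D}_{n,l}(V)$. I would split a single step into its two independent ingredients: the local structure of the pedigree $\mathcal{G}_N$ around the at most $nl$ ancestral lineages recorded by $\xi$, and the i.i.d.\ Mendelian randomness which, for each lineage, selects one of its individual's two parents and one of that parent's two homologs (four equally likely outcomes), independently across the $l$ unlinked loci.

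First I would introduce the good pedigree event $\Pi$ on which every ancestral individual appearing in $\xi$ has two distinct parents and all of these (at most $2m_\xi$, with $m_\xi\le nl$) are pairwise distinct. On $\Pi$ each fresh parent is the parent of a unique child recorded in $\xi$, so lineages carried by distinct current individuals are automatically routed to distinct individuals; consequently every entanglement that must be resolved is internal to a single current individual. A union bound over the $O((nl)^2)$ forbidden coincidences, each of probability $O(1/N)$ in a population of size $\asymp N$, gives $\PP(\Pi)\to 1$, hence $\PP(\Pi)\ge\tfrac12$ for all $N\ge N_0$; this suffices because the relevant suprema are taken along the sequence $N\to\infty$.

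Conditional on $\Pi$, the plan is to exhibit a separating Mendelian event $\mathcal{M}$ on which the step disperses every co-located individual so that the resulting configuration carries no pairing or cross-locus information, i.e.\ lies in $\iota(\En(V)^l)=\mathsf{D}_{n,l}(V)$. Since all lineages on a common current individual choose among the same four (parent, homolog) slots and the choices are independent across lineages and loci, any prescribed assignment of the $\le nl$ lineages to slots has probability at least $(1/4)^{nl}$; taking $\mathcal{M}$ to be a fixed separating assignment yields
\begin{equation*}
    \PP_\xi(\tau_{\mathsf{D}}\le 1)\ \ge\ \PP(\Pi)\,\PP(\mathcal{M}\mid\Pi)\ \ge\ \tfrac12\lp{\tfrac14}^{nl}\ =:\ p_{n,l}\ >\ 0,
\end{equation*}
uniformly in $N\ge N_0$ and in $\xi\notin\mathsf{D}_{n,l}(V)$. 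Because $p_{n,l}$ is independent of the starting state, conditioning successively on the state after each step and reapplying the one-step bound from whichever non-dispersed state is occupied gives $\PP_\xi(\tau_{\mathsf{D}}>r)\le(1-p_{n,l})^r$ for all $\xi$ and all $r\in\Z_+$, whence $\mathbb{E}_\xi[\tau_{\mathsf{D}}]\le 1/p_{n,l}$ and both suprema are finite with $\rho:=1-p_{n,l}\in(0,1)$.

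The main obstacle is the construction of the separating assignment underlying $\mathcal{M}$: one must verify that from \emph{every} non-dispersed $\xi$ there genuinely exists a one-step realization landing in $\mathsf{D}_{n,l}(V)$. The delicate configurations are those in which many lineages from distinct loci are co-located on a single diploid individual, since that individual offers only the four slots of its two parents into which the independent segregation must simultaneously distribute all of its ancestral material so that no chromosome ends up carrying two loci and no individual retains a pair. Pinning down this separation explicitly, and confirming that the favorable event has probability bounded below by a constant depending only on $(n,l)$ and not on $N$, is the crux of the argument; the pedigree estimate and the bootstrap to the geometric tail are then routine.
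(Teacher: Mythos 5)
There is a genuine gap, and it sits exactly at the point you flagged as the crux: the separating one-step assignment does not exist in general, so the one-step bound cannot be proved the way you propose. The problem is your model of the Mendelian randomness. An ancestral chromosome in the coupling is a gamete: \emph{all} loci it carries trace back to the \emph{same} parent individual, and only the choice of that parent's homolog is an independent Bernoulli$(1/2)$ flip per locus. Your ``four equally likely (parent, homolog) slots, independently across loci'' incorrectly decouples the parent choice across loci on a common chromosome. With the correct structure, take $\xi$ containing a chromosome carrying material at $|J|\ge 3$ loci: after one step all $|J|$ loci sit in one parent individual, distributed over its two homologs, so by pigeonhole some new chromosome again carries at least two loci and the state is not in $\mathsf{D}_{n,l}(V)$. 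Even for $|J|=2$, after one step the two loci either still share a chromosome or occupy the two chromosomes of the \emph{same} parent individual, i.e.\ a paired configuration outside $\iota\lp{\En(V)^l}$. So from any state with a mixed chromosome one has $\PP_\xi\lp{\tau_{\mathsf{D}}\le 1}=0$, and no choice of the event $\mathcal{M}$ can rescue the bound $\PP(\mathcal{M}\mid\Pi)\ge (1/4)^{nl}$. (Note this also means the lemma's one-step claim, read literally, is stronger than what the paper's own argument establishes; the operative conclusion, and the one used downstream, is the geometric tail.)

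The paper's proof takes a different and essentially forced route: it is a multi-step contraction rather than a one-step regeneration. Fix a mixed chromosome with locus set $J$, $|J|\ge 2$; with probability at least $1/2$ the independent homolog flips do not all agree, which strictly decreases the number of mixed chromosomes. Since the number of mixed chromosomes is bounded by a constant depending only on $(n,l)$, iterating this contraction a bounded number of times yields a uniform geometric tail $\PP_\xi(\tau_{\mathsf{D}}>r)\le\rho^r$, from which the expectation bound follows. Your good-pedigree event $\Pi$ and the bootstrap via the Markov property are sound and would be the right ingredients for handling the residual pairing (two single-locus chromosomes in one individual route to two distinct parents in the next step, up to $O(1/N)$ pedigree collisions) and for converting a bounded-step success probability into the geometric tail; what must be abandoned is the attempt to disperse everything in a single generation.
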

        
        \begin{proof}
        Fix $\xi\notin\mathsf{D}_{n,l}(V)$. Then there exists an ancestral chromosome in $\xi$ carrying ancestral
        material from at least two loci. Let $J\subset[l]$ be the set of loci for which that chromosome has
        nonempty component. Then $|J|\ge 2$. In the previous generation, for each $j\in J$ the locus $j$ chooses
        one of the two parental genes by an independent Bernoulli$(1/2)$ coin flip. With probability at least
        $1/2$ not all loci in $J$ choose the same parental gene, which yields a configuration with strictly fewer
        mixed chromosomes. Iterating this contraction and using that the number of mixed chromosomes is bounded by
        a constant depending only on $(n,l)$ gives a uniform geometric tail. 
        \end{proof}
        
        \begin{lemma}[Rare entrance into $\mathsf{D}_{n,l}(V)^c$]\label{L: rare_entrance_D}
        Fix $\varepsilon>0$. There is a constant $C_{\varepsilon,n,l}<\infty$ such that for every $N$ and every
        $\xi\in\mathsf{D}_{n,l}(V)$,
        \begin{equation*}
            \PP\lp{\widehat{\chi}^{N,n,l}(1)\notin\mathsf{D}_{n,l}(V)\,\,\mid\, \,\widehat{\chi}^{N,n,l}(0)=\xi,\ (\widetilde{\mathcal{V}}(0),m(0))\in B(\varepsilon)}
            \le C_{\varepsilon,n,l}\,h_N.
        \end{equation*}
        \end{lemma}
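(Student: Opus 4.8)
The plan is to reduce the event $\{\widehat{\chi}^{N,n,l}(1)\notin\mathsf{D}_{n,l}(V)\}$ to a pairwise \emph{collision} of ancestral lineages on the pedigree, to bound each collision \emph{unconditionally} by a constant times $h_N$, and only then to transfer this to the law conditioned on $\{(\widetilde{\mathcal V}(0),m(0))\in B(\varepsilon)\}$ by comparison with the unconditional probability. First I would record the combinatorial characterization of leaving the dispersed set. Fix $\xi\in\mathsf{D}_{n,l}(V)=\iota(\En(V)^l)$; by the definition of $\iota$, the state $\xi$ consists of at most $nl$ blocks, each sitting on its own chromosome of its own ancestral individual and carrying material from a single copy. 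Stepping back one generation, every block selects a parental individual together with one of its two gene copies. If no two blocks select the same parental individual, then each occupied parent carries exactly one chromosome from exactly one copy, so $\widehat{\chi}^{N,n,l}(1)\in\mathsf{D}_{n,l}(V)$. Consequently
\begin{equation*}
    \{\widehat{\chi}^{N,n,l}(1)\notin\mathsf{D}_{n,l}(V)\}
    \subseteq
    \bigcup_{\alpha<\beta}\{\text{blocks }\alpha,\beta\text{ trace to the same parental individual}\},
\end{equation*}
a union of at most $\binom{nl}{2}$ events, and it suffices to bound each pairwise collision probability by a constant multiple of $h_N$.

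Next I would bound the unconditional one-step collision probability for a fixed pair $\{\alpha,\beta\}$. Conditionally on the environment $(\mathcal V(0),m(0))$, two blocks sitting in a common deme $v$ (and not migrating) trace to the same parental individual with probability of the same order as the single-step pair-coalescence probability in deme $v$, namely $\Theta\!\lp{\langle \widetilde{\mathcal V}^v,\widetilde{\mathcal V}^v\rangle}$; averaging over the environment and using the definition of $c_N^v=\tfrac18\mathbb{E}\lb{\mathcal V_1^v(\mathcal V_1^v-1)/(N(v)-1)}$ together with Assumption~\ref{A: comparable} shows this expectation is $\Theta(c_N^v)=\Theta(h_N)$. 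For a pair in distinct demes, tracing to a common parent forces at least one lineage to change demes backward in time, and by (the remark following) Assumption~\ref{A: migration_convergence} the unconditional probability that a given lineage migrates along any edge is $O(h_N)$; bounding the residual coalescence factor by $1$ leaves $O(h_N)$. Summing the two cases, the unconditional pairwise collision probability is $O(h_N)$.

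Finally I would pass from the unconditional to the conditional bound. Since $B(\varepsilon)$ is measurable with respect to the environment, for any fixed pair
\begin{equation*}
    \PP\lp{\text{collide},\,B(\varepsilon)}
    =
    \mathbb{E}\lb{\PP(\text{collide}\mid\text{env})\,\mathds{1}_{B(\varepsilon)}}
    \le
    \mathbb{E}\lb{\PP(\text{collide}\mid\text{env})}
    =
    \PP(\text{collide})
    = O(h_N),
\end{equation*}
where the inequality uses nonnegativity of the conditional collision probability. Moreover $\PP(B(\varepsilon))=1-\Phi_N(B(\varepsilon)^c)\to 1$, because Assumption~\ref{A: rarity} gives $\tfrac1{h_N}\Phi_N\to\Phi$ vaguely away from $\mathbf 0_{V,E}$, whence $\Phi_N(B(\varepsilon)^c)=O(h_N)\to 0$. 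Dividing through and summing over the $\binom{nl}{2}$ pairs yields the claimed bound $C_{\varepsilon,n,l}\,h_N$ for all large $N$; for the finitely many remaining $N$ the probability is trivially at most $1\le h_N/\min_{N}h_N$, which only inflates the constant and accounts for its dependence on $\varepsilon$.

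The step I expect to be the main obstacle is precisely this last transfer. On the event $B(\varepsilon)$ a single realization may exhibit migrations of size order $\varepsilon$ rather than order $h_N$, so a \emph{pointwise} bound on $B(\varepsilon)$ is false; the resolution is never to estimate pointwise on $B(\varepsilon)$ but to bound the joint probability $\PP(\text{collide},B(\varepsilon))$ by the unconditional $\PP(\text{collide})$, where both migration and coalescence are genuinely $O(h_N)$ by Assumption~\ref{A: migration_convergence} and the definition of $c_N^v$, and to use $B(\varepsilon)$ only to control the denominator $\PP(B(\varepsilon))$. A secondary point requiring care is the combinatorial bookkeeping of Step~1, ensuring that every way of leaving $\mathsf{D}_{n,l}(V)$ --- whether through a shared chromosome across copies or through diploid pairing within a single copy --- is subsumed by the ``same parental individual'' event; this is handled by the union bound above.
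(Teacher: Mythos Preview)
Your argument is correct and follows essentially the same skeleton as the paper's proof: reduce leaving $\mathsf{D}_{n,l}(V)$ to a pairwise collision of ancestral individuals, bound each collision by a constant times the deme-$v$ pair-coalescence scale $c_N^v$, invoke Assumption~\ref{A: comparable} to get $c_N^v=O(h_N)$, and union bound over the $O((nl)^2)$ pairs.

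The one place you differ from the paper, and where you are in fact more careful, is the conditional-to-unconditional transfer. The paper works ``on the event $B(\varepsilon)$'' from the outset and then bounds the sharing-a-parent probability by a constant times $c_N^v$, leaving implicit why the \emph{conditional} coalescence probability is controlled by the \emph{unconditional} scale $c_N^v$. You make this step explicit via the clean inequality $\PP(\text{collide}\cap B(\varepsilon))\le \PP(\text{collide})=O(h_N)$ together with $\PP(B(\varepsilon))\to 1$, and you correctly note that a pointwise bound on $B(\varepsilon)$ would only yield $O(\varepsilon)$ for the migration piece, not $O(h_N)$. This also cleanly explains the $\varepsilon$-dependence of the constant through the threshold $N_0(\varepsilon)$ after which $\PP(B(\varepsilon))\ge 1/2$. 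Your secondary remark that same-copy diploid pairing must also be covered by the ``same parental individual'' event is right; the paper's phrase ``two lines from distinct loci'' is slightly too narrow, though the bound is identical.
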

        
        \begin{proof}
        On the event $\{(\widetilde{\mathcal{V}}(0),m(0))\in B(\varepsilon)\}$ all offspring and migration
        frequencies are uniformly small. Starting from $\xi\in\mathsf{D}_{n,l}(V)$ there are at most $nl$ ancestral
        lines across all loci. If two lines from distinct loci share a parental diploid individual in one step,
        then two sampled gene copies in the same deme necessarily choose the same parental gene copy with
        probability at least a fixed constant depending only on diploidy, hence the probability of sharing a
        parental individual is bounded by a constant multiple of the one-step coalescence probability in that
        deme. By the definition of $c_N^v$ and Assumption~\ref{A: comparable},
        \begin{equation*}
            \max_{v\in V} c_N^v \le C\,h_N
        \end{equation*}
        for some $C<\infty$. A union bound over $O((nl)^2)$ cross-locus pairs yields the first claim.
        
        \end{proof}
        
        \begin{lemma}[Outside probability at deterministic rescaled times]\label{L: outside_prob_time}
        For every $T>0$ there exists $C_{T,n,l}<\infty$ such that for all $N$,
        \begin{equation*}
            \sup_{0\le t\le T}
            \PP\lp{\widehat{\chi}^{N,n,l}(k_N(t))\notin\mathsf{D}_{n,l}(V)}
            \le C_{T,n,l}\,h_N.
        \end{equation*}
        In particular,
        \begin{equation*}
            \sup_{0\le t\le T}
            \PP\lp{\widehat{\chi}^{N,n,l}(k_N(t))\notin\mathsf{D}_{n,l}(V)}\longrightarrow 0.
        \end{equation*}
        \end{lemma}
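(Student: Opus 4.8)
The plan is to prove the bound by a last-exit (renewal-type) decomposition that splices together the two one-step estimates already in hand: the rare-entrance bound of Lemma~\ref{L: rare_entrance_D} and the uniform geometric return of Lemma~\ref{L: geometric_return_D}. Under the annealed law the environment is i.i.d.\ across generations, so $\widehat{\chi}^{N,n,l}$ is a time-homogeneous Markov chain and the notations $\PP_\xi(\tau_{\mathsf{D}}>r)$ and $\PP_\xi(\widehat{\chi}^{N,n,l}(1)\notin\mathsf{D}_{n,l}(V))$ are unambiguous. We take the completely dispersed initial configuration $\widehat{\chi}^{N,n,l}(0)\in\mathsf{D}_{n,l}(V)$ (so that $\tau_{\mathsf{D}}=0$), and it then suffices to control $\PP(\widehat{\chi}^{N,n,l}(k)\notin\mathsf{D}_{n,l}(V))$ uniformly in $k\le k_N(T)$.

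First I would upgrade Lemma~\ref{L: rare_entrance_D} to an \emph{unconditional} one-step exit bound from $\mathsf{D}_{n,l}(V)$. Fixing any $\varepsilon>0$ and splitting according to whether the driving pair lies in $B(\varepsilon)$ or in $B(\varepsilon)^c$,
\[
\sup_{\xi\in\mathsf{D}_{n,l}(V)}\PP_\xi\!\lp{\widehat{\chi}^{N,n,l}(1)\notin\mathsf{D}_{n,l}(V)}
\;\le\;
C_{\varepsilon,n,l}\,h_N \;+\; \Phi_N\!\lp{B(\varepsilon)^c}.
\]
The first term is exactly Lemma~\ref{L: rare_entrance_D}. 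For the second, Assumption~\ref{A: rarity} gives $\tfrac{1}{c_N^{v_0}}\Phi_N\to\Phi$ vaguely on $\Delta^V\times[0,1]^E\setminus\{\mathbf{0}_{V,E}\}$; since $B(\varepsilon)^c$ is closed and bounded away from $\mathbf{0}_{V,E}$ inside the compact space $\Delta^V\times[0,1]^E$, we have $\Phi(B(\varepsilon)^c)<\infty$, whence $\Phi_N(B(\varepsilon)^c)\le C_\varepsilon\,h_N$ for all large $N$. Fixing $\varepsilon$ once and for all produces a constant $q_{n,l}$ with $\sup_{\xi\in\mathsf{D}_{n,l}(V)}\PP_\xi(\widehat{\chi}^{N,n,l}(1)\notin\mathsf{D}_{n,l}(V))\le q_{n,l}\,h_N$. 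This step is the main obstacle: Lemma~\ref{L: rare_entrance_D} only bounds exits during \emph{small} generations, and the crux is that genuinely large migration or reproduction events—during which many lineages may re-bundle and leave $\mathsf{D}_{n,l}(V)$ with $O(1)$ probability—nevertheless occur only with probability $O(h_N)$ per step, which is precisely what Assumption~\ref{A: rarity} furnishes.

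Finally I would run the last-exit decomposition. On $\{\widehat{\chi}^{N,n,l}(k)\notin\mathsf{D}_{n,l}(V)\}$ there is a last index $j<k$ with $\widehat{\chi}^{N,n,l}(j)\in\mathsf{D}_{n,l}(V)$, namely $j=\ell_{\mathsf{D}}(k)$, so
\[
\PP\!\lp{\widehat{\chi}^{N,n,l}(k)\notin\mathsf{D}_{n,l}(V)}
=
\sum_{j=0}^{k-1}\PP\!\lp{\widehat{\chi}^{N,n,l}(j)\in\mathsf{D}_{n,l}(V),\ \widehat{\chi}^{N,n,l}(j+1),\dots,\widehat{\chi}^{N,n,l}(k)\notin\mathsf{D}_{n,l}(V)}.
\]
Conditioning at time $j$ and applying the Markov property, each summand factors as the one-step exit probability from $\widehat{\chi}^{N,n,l}(j)\in\mathsf{D}_{n,l}(V)$, bounded by $q_{n,l}h_N$ from the previous step, times the probability that the chain, started at the post-exit state $\eta\notin\mathsf{D}_{n,l}(V)$, stays outside for the remaining $k-j-1$ steps, bounded by $\PP_\eta(\tau_{\mathsf{D}}>k-j-1)\le\rho^{\,k-j-1}$ via Lemma~\ref{L: geometric_return_D}. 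Hence each summand is at most $q_{n,l}h_N\,\rho^{\,k-j-1}$, and summing the resulting geometric series gives
\[
\PP\!\lp{\widehat{\chi}^{N,n,l}(k)\notin\mathsf{D}_{n,l}(V)}\;\le\;\frac{q_{n,l}}{1-\rho}\,h_N
\]
uniformly in $k$, and therefore uniformly over $0\le t\le T$ with $k=k_N(t)$; this is the asserted bound with $C_{T,n,l}:=q_{n,l}/(1-\rho)$ (in fact independent of $T$). The ``in particular'' statement follows immediately, since $h_N=c_N^{v_0}\to0$ by Assumption~\ref{A: continuous}. Should one not assume a dispersed start, the decomposition acquires the extra term $\PP(\tau_{\mathsf{D}}>k)\le\rho^{\,k_N(t)}$, which is exponentially small for $t$ bounded below and whose $O(1)$-step initial transient rescales to negligible time.
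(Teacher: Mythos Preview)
Your proof is correct and follows essentially the same last-exit decomposition as the paper: split on the last time $j<k$ at which the chain was in $\mathsf{D}_{n,l}(V)$, bound the one-step exit probability by $O(h_N)$ using Lemma~\ref{L: rare_entrance_D}, bound the subsequent sojourn outside $\mathsf{D}_{n,l}(V)$ by the geometric tail $\rho^{\,k-j-1}$ from Lemma~\ref{L: geometric_return_D}, and sum the geometric series. If anything, you are more careful than the paper at one point: Lemma~\ref{L: rare_entrance_D} is stated only conditionally on $\{(\widetilde{\mathcal{V}}(0),m(0))\in B(\varepsilon)\}$, and the paper simply asserts that the entrance probability is $O(h_N)$, whereas you explicitly split on $B(\varepsilon)$ versus $B(\varepsilon)^c$ and invoke Assumption~\ref{A: rarity} to control $\Phi_N(B(\varepsilon)^c)=O(h_N)$, which is the right way to uncondition.
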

        
        \begin{proof}
            Fix $t\le T$ and write $k=k_N(t)$. Define entrance times
            \begin{equation*}
                \sigma_1:=\inf\{j\ge 0:\widehat{\chi}^{N,n,l}(j)\in\mathsf{D}_{n,l}(V),\ \widehat{\chi}^{N,n,l}(j+1)\notin\mathsf{D}_{n,l}(V)\},
            \end{equation*}
            and inductively $\sigma_{q+1}:=\inf\{j>\sigma_q:\widehat{\chi}^{N,n,l}(j)\in\mathsf{D}_{n,l}(V),\ \widehat{\chi}^{N,n,l}(j+1)\notin\mathsf{D}_{n,l}(V)\}$,
            with the convention $\inf\varnothing=\infty$. For each $q$ define the corresponding return time
            \begin{equation*}
                \tau_q:=\inf\{j>\sigma_q:\widehat{\chi}^{N,n,l}(j)\in\mathsf{D}_{n,l}(V)\}.
            \end{equation*}
            The event $\{\widehat{\chi}^{N,n,l}(k)\notin\mathsf{D}_{n,l}(V)\}$ implies that there exists $q$ such that
            $\sigma_q<k<\tau_q$. Hence,
            \begin{align*}
                \PP\lp{\widehat{\chi}^{N,n,l}(k)\notin\mathsf{D}_{n,l}(V)}
                \le
                &\sum_{j=0}^{k-1}
                \PP\lp{\widehat{\chi}^{N,n,l}(j)\in\mathsf{D}_{n,l}(V),\ \widehat{\chi}^{N,n,l}(j+1)\notin\mathsf{D}_{n,l}(V)}\\
                &\quad\quad\quad\times \sup_{\xi\notin\mathsf{D}_{n,l}(V)}\PP_\xi(\tau_{\mathsf{D}}>k-j-1).
            \end{align*}
            Lemma~\ref{L: geometric_return_D} gives a uniform bound
            $\sup_{\xi\notin\mathsf{D}_{n,l}(V)}\PP_\xi(\tau_{\mathsf{D}}>r)\le\rho^{\,r}$.
            Lemma~\ref{L: rare_entrance_D} gives that the entrance probability is $O(h_N)$ uniformly in $j$.
            Thus,
            \begin{equation*}
                \PP\lp{\widehat{\chi}^{N,n,l}(k)\notin\mathsf{D}_{n,l}(V)}
                \le
                \sum_{r\ge 1} C\,h_N\,\rho^{\,r}
                \le
                C'\,h_N,
            \end{equation*}
            with constants depending only on $T,n,l$. Taking the supremum over $t\in[0,T]$ yields the claim.
        \end{proof}
        
        \begin{lemma}[Negligible rescaled outside time]\label{L: outside_time_negligible}
            For every $T>0$,
            \begin{equation*}
                \mathcal{O}_{N,l}(T)\ \xrightarrow{\PP}\ 0.
            \end{equation*}
        \end{lemma}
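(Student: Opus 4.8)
The plan is to control the rescaled outside time in expectation and then invoke Markov's inequality. Since $\mathcal{O}_{N,l}(T)\ge 0$, convergence in probability will follow from $\mathbb{E}\lb{\mathcal{O}_{N,l}(T)}\to 0$ as $N\to\infty$, which in fact gives the stronger statement of $L^1$ convergence.

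First I would expand the expectation by linearity. Recalling that $h_N=c_N^{v_0}$ and $k_N(T)=\lfloor T/h_N\rfloor$, Definition~\ref{D: outside_time} gives
\begin{equation*}
    \mathbb{E}\lb{\mathcal{O}_{N,l}(T)}
    =
    h_N\sum_{j=0}^{k_N(T)-1}\PP\lp{\widehat{\chi}^{N,n,l}(j)\notin\mathsf{D}_{n,l}(V)}.
\end{equation*}
The key observation is that each summand is the outside probability at the deterministic integer time $j$, and every such $j$ in the range can be written as $j=k_N(jh_N)$ with $jh_N<T$. Hence Lemma~\ref{L: outside_prob_time}, whose bound is uniform over $0\le t\le T$, applies to each term and yields a constant $C_{T,n,l}<\infty$, depending only on $T,n,l$, with $\PP\lp{\widehat{\chi}^{N,n,l}(j)\notin\mathsf{D}_{n,l}(V)}\le C_{T,n,l}\,h_N$ for every $0\le j\le k_N(T)-1$.

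It then remains only to count the terms. There are $k_N(T)=\lfloor T/h_N\rfloor$ summands, so using $h_N k_N(T)=h_N\lfloor T/h_N\rfloor\le T$ one obtains
\begin{equation*}
    \mathbb{E}\lb{\mathcal{O}_{N,l}(T)}
    \le
    h_N\cdot k_N(T)\cdot C_{T,n,l}\,h_N
    \le
    T\,C_{T,n,l}\,h_N.
\end{equation*}
By Assumption~\ref{A: continuous} we have $h_N=c_N^{v_0}\to 0$, so the right-hand side tends to zero, giving $\mathbb{E}\lb{\mathcal{O}_{N,l}(T)}\to 0$ and therefore $\mathcal{O}_{N,l}(T)\xrightarrow{\PP}0$.

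I do not expect a genuine obstacle at this stage: all the difficulty has been front-loaded into Lemma~\ref{L: outside_prob_time}, whose uniform $O(h_N)$ bound on the pointwise outside probability rests on the uniform geometric return of Lemma~\ref{L: geometric_return_D} and the rare entrance estimate of Lemma~\ref{L: rare_entrance_D}. The only subtlety worth flagging is that the per-step $O(h_N)$ bound must hold uniformly across all $\sim 1/h_N$ times at once — precisely what the supremum over $t\in[0,T]$ in Lemma~\ref{L: outside_prob_time} supplies — so that the single factor $h_N$ from the time rescaling is not overwhelmed by the growing number of summands, leaving a net $O(h_N)$ that vanishes.
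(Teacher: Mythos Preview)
Your proof is correct and follows essentially the same approach as the paper: reduce to $\mathbb{E}\lb{\mathcal{O}_{N,l}(T)}\to 0$ via Markov's inequality, expand the expectation as $h_N\sum_{j=0}^{k_N(T)-1}\PP(\widehat{\chi}^{N,n,l}(j)\notin\mathsf{D}_{n,l}(V))$, bound each summand uniformly by $C_{T,n,l}h_N$ using Lemma~\ref{L: outside_prob_time}, and conclude with $h_N k_N(T)\le T$.
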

        
        \begin{proof}
            By Markov's inequality it suffices to show $\mathbb{E}[\mathcal{O}_{N,l}(T)]\to 0$. Using
            Definition~\ref{D: outside_time} and Lemma~\ref{L: outside_prob_time},
            \begin{align*}
                \mathbb{E}[\mathcal{O}_{N,l}(T)]
                &=
                h_N \sum_{j=0}^{k_N(T)-1}
                \PP\lp{\widehat{\chi}^{N,n,l}(j)\notin\mathsf{D}_{n,l}(V)}\\
                &\le
                h_N \cdot k_N(T)\cdot \sup_{0\le t\le T}\PP\lp{\widehat{\chi}^{N,n,l}(k_N(t))\notin\mathsf{D}_{n,l}(V)}
                \le
                T\cdot C_{T,n,l}\,h_N,
            \end{align*}
            which tends to $0$.
        \end{proof}
        
        \begin{lemma}\label{L: inertia_outside}
            For every $T>0$,
            \begin{equation*}
                \sup_{0\le t\le T}\PP\lp{Z_{N,l}(t)\neq \widetilde Z_{N,l}(t)}\longrightarrow 0.
            \end{equation*}
            Consequently, for any $r\in\N$ and any $(t_1,\ldots,t_r)\in[0,T]^r$,
            \begin{equation*}
                \PP\lp{\lp{Z_{N,l}(t_1),\ldots,Z_{N,l}(t_r)}\neq
                          \lp{\widetilde Z_{N,l}(t_1),\ldots,\widetilde Z_{N,l}(t_r)}}\longrightarrow 0.
            \end{equation*}
        \end{lemma}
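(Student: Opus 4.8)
The plan is to reduce both assertions to the uniform outside-probability bound already established in Lemma~\ref{L: outside_prob_time}, via a single elementary set containment. The key observation is that $Z_{N,l}(t)$ and $\widetilde Z_{N,l}(t)$ can disagree only when the chain sits outside the dispersed set at the current rescaled time. Concretely, I would first record the inclusion
\[
    \lcb{Z_{N,l}(t)\neq \widetilde Z_{N,l}(t)}
    \;\subseteq\;
    \lcb{\widehat{\chi}^{N,n,l}(k_N(t))\notin \mathsf{D}_{n,l}(V)}.
\]
This holds because, on the complementary event $\{\widehat{\chi}^{N,n,l}(k_N(t))\in \mathsf{D}_{n,l}(V)\}$, the index $k_N(t)$ is itself a dispersed time, so Definition~\ref{D: last_dispersed_time} gives $\ell_{\mathsf{D}}(k_N(t))=k_N(t)$, whence $\widetilde Z_{N,l}(t)=\pi(\widehat{\chi}^{N,n,l}(k_N(t)))=Z_{N,l}(t)$ by Definition~\ref{D: trace_process}. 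Note that this argument does not require $\pi=cd_V$ to be injective: membership of the chain in $\mathsf{D}_{n,l}(V)$ alone forces the two projections to coincide.

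Granting the containment, the first claim follows at once from Lemma~\ref{L: outside_prob_time} and Assumption~\ref{A: continuous}. Taking the supremum over $t\in[0,T]$ and monotonicity of probability yields
\[
    \sup_{0\le t\le T}\PP\lp{Z_{N,l}(t)\neq \widetilde Z_{N,l}(t)}
    \;\le\;
    \sup_{0\le t\le T}\PP\lp{\widehat{\chi}^{N,n,l}(k_N(t))\notin \mathsf{D}_{n,l}(V)}
    \;\le\;
    C_{T,n,l}\,h_N \;\longrightarrow\; 0,
\]
since $h_N=c_N^{v_0}\to 0$. For the finite-dimensional consequence I would simply union bound over the $r$ coordinates: disagreement of the vectors forces disagreement in at least one coordinate, so
\[
    \PP\lp{\lp{Z_{N,l}(t_1),\ldots,Z_{N,l}(t_r)}\neq
              \lp{\widetilde Z_{N,l}(t_1),\ldots,\widetilde Z_{N,l}(t_r)}}
    \;\le\;
    \sum_{i=1}^r \PP\lp{Z_{N,l}(t_i)\neq \widetilde Z_{N,l}(t_i)}
    \;\le\;
    r\,C_{T,n,l}\,h_N,
\]
which tends to $0$ as $N\to\infty$ for each fixed $r$ and $T$.

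There is essentially no analytic obstacle in this lemma; all of the genuine probabilistic content—the geometric return to $\mathsf{D}_{n,l}(V)$ and the $O(h_N)$ entrance rate into its complement—has already been absorbed into Lemmas~\ref{L: geometric_return_D}--\ref{L: outside_prob_time}. The only point demanding care is the bookkeeping of the containment, specifically verifying the identity $\ell_{\mathsf{D}}(k_N(t))=k_N(t)$ precisely on the dispersed event, and observing that the uniformity in $t$ (both for the supremum and for the multi-time bound) is inherited directly from the uniform constant $C_{T,n,l}$ furnished by Lemma~\ref{L: outside_prob_time} rather than requiring any fresh estimate.
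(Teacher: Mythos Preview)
Your proof is correct and is in fact cleaner than the paper's. The paper argues that if $Z_{N,l}(t)\neq \widetilde Z_{N,l}(t)$ then some index $j\in\{\ell_{\mathsf{D}}(k),\ldots,k-1\}$ lies outside $\mathsf{D}_{n,l}(V)$, then crudely bounds the probability of this by the expected total number of outside indices up to $k$, i.e.\ by $h_N^{-1}\mathbb{E}[\mathcal{O}_{N,l}(T)]$, and appeals to Lemma~\ref{L: outside_time_negligible}. That last step is at best sloppy: the proof of Lemma~\ref{L: outside_time_negligible} only gives $\mathbb{E}[\mathcal{O}_{N,l}(T)]\le T\,C_{T,n,l}\,h_N$, so $h_N^{-1}\mathbb{E}[\mathcal{O}_{N,l}(T)]$ is merely $O(1)$, not $o(1)$. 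Your sharper containment $\{Z_{N,l}(t)\neq \widetilde Z_{N,l}(t)\}\subseteq\{\widehat{\chi}^{N,n,l}(k_N(t))\notin\mathsf{D}_{n,l}(V)\}$ bypasses the counting argument entirely and feeds directly into Lemma~\ref{L: outside_prob_time}, yielding the $C_{T,n,l}\,h_N$ bound without any intermediate step. The union bound for the finite-dimensional statement is identical in both proofs.
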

        
        \begin{proof}
            Fix $t\le T$ and write $k=k_N(t)$. If $Z_{N,l}(t)\neq \widetilde Z_{N,l}(t)$, then there exists an index
            $j\in\{\ell_{\mathsf{D}}(k),\ldots,k-1\}$ such that $\widehat{\chi}^{N,n,l}(j)\notin\mathsf{D}_{n,l}(V)$.
            Therefore,
            \begin{align*}
                \PP\lp{Z_{N,l}(t)\neq \widetilde Z_{N,l}(t)}
                &\le
                \PP\lp{\exists j\in\{\ell_{\mathsf{D}}(k),\ldots,k-1\}:\widehat{\chi}^{N,n,l}(j)\notin\mathsf{D}_{n,l}(V)}\\
                &\le
                \mathbb{E}\lb{\#\{j\le k:\widehat{\chi}^{N,n,l}(j)\notin\mathsf{D}_{n,l}(V)\}}.
            \end{align*}
            The last expectation is bounded by $h_N^{-1}\mathbb{E}[\mathcal{O}_{N,l}(T)]$, which tends to $0$ by
            Lemma~\ref{L: outside_time_negligible}. This yields the first claim uniformly for $t\le T$.
            The second claim follows by a union bound over the finitely many times $t_1,\ldots,t_r$.
        \end{proof}
        
        \begin{lemma}[Reduction to the trace process on $\En(V)^l$]\label{L: reduction_trace}
        Let $T>0$ and let $(t_1,\ldots,t_r)\in[0,T]^r$. Then
        \begin{equation*}
            \lp{Z_{N,l}(t_1),\ldots,Z_{N,l}(t_r)}
            -
            \lp{\widetilde Z_{N,l}(t_1),\ldots,\widetilde Z_{N,l}(t_r)}
            \xrightarrow{\PP} \mathbf{0}.
        \end{equation*}
        In particular, $Z_{N,l}$ and $\widetilde Z_{N,l}$ have identical subsequential finite-dimensional limits.
        \end{lemma}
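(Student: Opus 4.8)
The plan is to deduce this lemma almost immediately from Lemma~\ref{L: inertia_outside}, whose second display already furnishes the essential estimate. The only genuine work is to reconcile the two formulations: the present statement asserts convergence in probability of a \emph{difference} to $\mathbf{0}$, whereas Lemma~\ref{L: inertia_outside} asserts that the probability the two tuples \emph{disagree} tends to zero.

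First I would fix the interpretation of the left-hand side. Since $\En(V)$ is a finite set for fixed $n$, so is the product $\lp{\En(V)^l}^r$. Equip $\En(V)^l$ with the discrete metric $d(\vec{\xi},\vec{\eta}) := \mathbf{1}_{\{\vec{\xi}\neq\vec{\eta}\}}$, or equivalently fix any injection of $\En(V)^l$ into a Euclidean space and read the subtraction against $\mathbf{0}$ through that embedding. Under either reading, for each coordinate $i$ one has $Z_{N,l}(t_i)-\widetilde Z_{N,l}(t_i)\to\mathbf{0}$ in probability if and only if $\PP\lp{Z_{N,l}(t_i)\neq\widetilde Z_{N,l}(t_i)}\to 0$, because on a finite space the distance between two distinct points is bounded below by a fixed positive constant. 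A union bound over the $r$ coordinates then shows that joint convergence of the full $r$-tuple difference to $\mathbf{0}$ is equivalent to
\begin{equation*}
    \PP\lp{\lp{Z_{N,l}(t_1),\ldots,Z_{N,l}(t_r)}\neq\lp{\widetilde Z_{N,l}(t_1),\ldots,\widetilde Z_{N,l}(t_r)}}\longrightarrow 0,
\end{equation*}
which is precisely the second display of Lemma~\ref{L: inertia_outside}. This establishes the first assertion.

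For the second assertion I would invoke the converging-together (Slutsky) lemma. Suppose, along some subsequence, $\lp{\widetilde Z_{N,l}(t_1),\ldots,\widetilde Z_{N,l}(t_r)}$ converges in distribution to some law $\nu$ on $\lp{\En(V)^l}^r$. Since the difference of the two tuples tends to $\mathbf{0}$ in probability along the whole sequence, the converging-together lemma forces $\lp{Z_{N,l}(t_1),\ldots,Z_{N,l}(t_r)}$ to converge in distribution to the same $\nu$ along that subsequence, and symmetrically with the roles of $Z_{N,l}$ and $\widetilde Z_{N,l}$ interchanged. Hence $Z_{N,l}$ and $\widetilde Z_{N,l}$ share all subsequential finite-dimensional limits.

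There is no serious obstacle here: all of the probabilistic difficulty, namely the control of the rare and short-lived excursions of $\widehat{\chi}^{N,n,l}$ outside $\mathsf{D}_{n,l}(V)$, was already absorbed into Lemmas~\ref{L: outside_time_negligible} and \ref{L: inertia_outside}. The present lemma is the clean repackaging of that control into a statement about finite-dimensional distributions, which is exactly the form needed to replace the cumbersome coupling-space process $Z_{N,l}$ by its last-dispersed trace $\widetilde Z_{N,l}$ in the remainder of the separation-of-timescales argument.
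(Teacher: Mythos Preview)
Your proof is correct and takes essentially the same approach as the paper, which simply states that the claim is immediate from Lemma~\ref{L: inertia_outside}. You have merely spelled out what ``immediate'' means: that on a finite state space the convergence-in-probability of the difference to $\mathbf{0}$ is equivalent to the probability of disagreement vanishing, and that the second assertion follows by Slutsky.
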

        
        \begin{proof}
        The claim is immediate from Lemma~\ref{L: inertia_outside}.
        \end{proof}

    \subsection{Transitions at small jumps}\label{SS: small_jumps}

        Inn this section we prove Lemma~\ref{L: small_jumps}. The basic structure of the argument is as follows: There exist some combinatorial quantities so that the probability of anything that is neither a single pairwise coalescence in a deme or a single migration of a lineage is $O(\varepsilon)$. At the same time, the one-step transition probabilities of single-lineage migrations and single-pair coalescences are arbitrarily close to the neutral migration and neutral coalescence rates, respectively. Combining these two observations demonstrates that the one-step transition probabilities for small-scale events is sufficiently concentrated around the neutral changes in the realizations.

        We proceed now with the proof of Lemma~\ref{L: small_jumps}.

        \begin{proof}[Proof of Lemma~\ref{L: small_jumps}]
            Fix $l\in\N$ and $\varepsilon>0$, and write
            \begin{equation*}
                \mathscr{E}_{N,\varepsilon}:=\{(\widetilde V_N(0), m(0))\in B(\varepsilon)\}.
            \end{equation*}
            Recall that $P_{N,n,\varepsilon,l}$ is the one-step transition matrix of the $l$-tuple
            $\lp{\chi_i^{N,n}}_{i=1}^l$ conditioned on $\mathscr{E}_{N,\varepsilon}$:
            \begin{equation*}
                P_{N,n,\varepsilon,l}(\vec\xi,\vec\eta)
                :=
                \PP\lp{ (\chi_i^{N,n}(1))_{i=1}^l=\vec\eta \,\,\mid\, \, (\chi_i^{N,n}(0))_{i=1}^l=\vec\xi,\ \mathscr{E}_{N,\varepsilon}},
                \qquad \vec\xi,\vec\eta\in\En(V)^l.
            \end{equation*}
            
            Let $r:=K_n(\kappa)+M_n(\mu)$ be the generator of a single $(0,\kappa,\mu)$-$n$-coalescent on $\En(V)$,
            and let $R$ be the generator of $l$ joint, independent copies on $\En(V)^l$. Thus, for
            $\vec\xi=(\xi_1,\ldots,\xi_l)$ and $\vec\eta=(\eta_1,\ldots,\eta_l)$,
            \begin{equation}\label{E: product_generator_R_small}
                R(\vec\xi,\vec\eta)
                =
                \sum_{j=1}^l r(\xi_j,\eta_j)\prod_{i\neq j}\mathds{1}_{\{\xi_i=\eta_i\}}.
            \end{equation}
            Set $A_N:=e^{c_N^{v_0}R}$ and endow kernels on the finite space $\En(V)^l$ with the norm
            \begin{equation*}
                \|P\|_\infty:=\sup_{\vec\xi\in\En(V)^l}\sum_{\vec\eta\in\En(V)^l}|P(\vec\xi,\vec\eta)|.
            \end{equation*}
            Since $\En(V)^l$ is finite, the matrix exponential admits a uniform second-order remainder: there is a
            finite constant $C_R$ such that
            \begin{equation}\label{E: exp_remainder_small}
                \la{A_N-(I+c_N^{v_0}R)}_\infty
                \le
                C_R\,(c_N^{v_0})^2.
            \end{equation}
            
            We compare $P_{N,n,\varepsilon,l}$ to the first-order kernel $I+c_N^{v_0}R$ by separating ``good'' one-step
            updates from events that involve two or more elementary changes. By an elementary change we simply mean the moving of a single lineage or a single pairwise coalescence. Fix $\vec\xi\in\En(V)^l$ and consider
            the event $\mathcal{B}_{N,\varepsilon}(\vec\xi)$ that, at least one of the following
            occurs among the $l$ coordinates:
            \begin{itemize}
                \item a merger of three or more blocks within some deme,
                \item two or more binary coalescences (in possibly different demes among possibly different coordinates),
                \item two or more migrations (among possibly different and edges and possibly different coordinates),
                \item a migration and a coalescence in the same step (among possibly different coordinates).
            \end{itemize}
            On $\mathcal{B}_{N,\varepsilon}(\vec\xi)^c$, each coordinate undergoes either no change, a single migration of
            one block along one edge, or a single binary coalescence of two blocks within one deme. In particular,
            if $\vec\eta$ differs from $\vec\xi$ by more than one coordinate, or if a coordinate changes by more than
            one elementary move, then necessarily $\mathcal{B}_{N,\varepsilon}(\vec\xi)$ occurs, and hence
            \begin{equation}\label{E: nonlocal_small}
                P_{N,n,\varepsilon,l}(\vec\xi,\vec\eta)
                \le
                \PP\lp{\mathcal{B}_{N,\varepsilon}(\vec\xi)\,\,\mid\,\,\vec\xi,\mathscr{E}_{N,\varepsilon}}.
            \end{equation}
            
            We now bound the probability of $\mathcal{B}_{N,\varepsilon}(\vec\xi)$ by quantities of order
            $c_N^{v_0}f(\varepsilon)$. Write $\widetilde V_N(0)=(x,m)$, with $x=(x^v)_{v\in V}$ and $x^v\in\Delta$.
            By definition of $B(\varepsilon)$ under the metric $d$, on $\mathscr{E}_{N,\varepsilon}$ we have
            \begin{equation}\label{E: small_ball_bounds}
                \sup_{v\in V}\left\langle x^v, x^v \right\rangle^{\frac{1}{2}} \le \varepsilon,
                \qquad
                \|m\|_\infty \le \varepsilon.
            \end{equation}
            
            \emph{Triple mergers.}
            Fix a coordinate and a deme $v$. Conditional on $(\widetilde V_N(0), m(0))=(x,m)$, the $(x,m)$-paintbox implies
            that for any three specified blocks in deme $v$ the probability that they are assigned to the same
            parental chromosome is bounded by $\sum_{i\ge1}(x_i^v)^3$. On $\mathscr{E}_{N,\varepsilon}$,
            \begin{equation*}
                \sum_{i\ge1}(x_i^v)^3
                \le
                x_1^v\langle x^v,x^v\rangle
                \le
                \|x^v\|_2\,\langle x^v,x^v\rangle
                \le
                \varepsilon\,\langle x^v,x^v\rangle,
            \end{equation*}
            where we used $x_1^v \leq \left\langle x^v, x^v \right\rangle^{\frac{1}{2}}$.
            Taking a union bound over the at most $\binom{n}{3}$ triples yields that the probability of \emph{any}
            triple merger in deme $v$ in that coordinate is at most
            $\binom{n}{3}\varepsilon\,\langle x^v,x^v\rangle$. Summing over $v\in V$ and over the $l$ coordinates gives
            \begin{equation}\label{E: triple_merge_bound_small}
                \PP\lp{\text{some triple merger occurs}\,\,\mid\,\,\vec\xi,\mathscr{E}_{N,\varepsilon}}
                \le
                l\binom{n}{3}\varepsilon\sum_{v\in V}\mathbb{E}\!\lb{\langle x^v,x^v\rangle\,\,\mid\,\,\mathscr{E}_{N,\varepsilon}}.
            \end{equation}
            The right-hand side is $c_N^{v_0}\,f_{\mathrm{tr}}(\varepsilon)$ for some continuous $f_{\mathrm{tr}}$ with
            $f_{\mathrm{tr}}(0)=0$, because the factor $\varepsilon$ is explicit and, by Assumption~\ref{A: comparable},
            $\sum_v c_N^v=O(c_N^{v_0})$, while $\mathbb{E}[\langle x^v,x^v\rangle\mid \mathscr{E}_{N,\varepsilon}]$ is controlled
            by the same pair-coalescence scale.
            
            \emph{Two migrations.}
            Fix a coordinate. Conditional on $(\widetilde V_N(0), m(0))=(x,m)$ and on the initial configuration, each lineage
            that currently lies in deme $v$ migrates along a specific edge $e=(v,w)$ with probability at most $m_e$. Hence, for
            two distinct lineages, say $a\neq b$, and for two (not necessarily equal) edges $e,e'\in E$,
            \begin{equation*}
                \PP\lp{\text{$a$ migrates along $e$ and $b$ migrates along $e'$}\,\,\mid\,\,\vec\xi,(x,m)}
                \;\le\;
                m_e\,m_{e'}.
            \end{equation*}
            Taking a union bound over the at most $\binom{n}{2}$ choices of $(a,b)$ and summing over all ordered pairs
            of edges gives
            \begin{align}\label{E: two_mig_any_edges}
            \PP&\lp{\text{at least two lineages migrate (along any two edges)}\,\,\mid\,\,\vec\xi,(x,m)}\\
            &\;\le\;
            \binom{n}{2}\sum_{e\in E}\sum_{e'\in E} m_e m_{e'}
            \;=\;
            \binom{n}{2}\lp{\sum_{e\in E} m_e}^2.
            \end{align}
            On $\mathscr{E}_{N,\varepsilon}$ we have $\|m\|_\infty\le\varepsilon$, and therefore
            \begin{equation*}
                \lp{\sum_{e\in E} m_e}^2
                \;\le\;
                \|m\|_\infty\sum_{e\in E} m_e
                \;\le\;
                \varepsilon\sum_{e\in E} m_e.
            \end{equation*}
            Averaging conditional on $\mathscr{E}_{N,\varepsilon}$ and summing over the $l$ coordinates yields a bound of
            the form $c_N^{v_0}f_{\mathrm{mig}}(\varepsilon)$ with $f_{\mathrm{mig}}(0)=0$, using
            Assumption~\ref{A: migration_convergence} (and finiteness of $E$) to identify that the total one-step
            migration probability is $O(c_N^{v_0})$.
            
            \emph{Two binary coalescences, or a migration and a coalescence.}
            These events require at least two elementary changes in the same discrete step among the $l$ coordinates, and
            are therefore bounded by products of one-event probabilities. Since sample size is fixed, a crude union
            bound over potential coalescing pairs shows that the probability of two or more binary coalescences in a
            single step is $O((c_N^{v_0})^2)$, uniformly in $\vec\xi$; similarly for the probability that a migration
            and a coalescence occur in the same step.
            
            Combining the preceding bounds yields a continuous $f_0:\R_+\to\R_+$ with $f_0(0)=0$ such that
            \begin{equation}\label{E: bad_event_prob_small}
                \sup_{\vec\xi\in\En(V)^l}
                \PP\lp{\mathcal{B}_{N,\varepsilon}(\vec\xi)\,\,\mid\,\,\vec\xi,\mathscr{E}_{N,\varepsilon}}
                \le
                f_0(\varepsilon)\,c_N^{v_0} \;+\; O\lp{(c_N^{v_0})^2}.
            \end{equation}
            
            We next identify the first-order transition probabilities on $\mathcal{B}_{N,\varepsilon}(\vec\xi)^c$.
            Suppose $\vec\eta$ differs from $\vec\xi$ in exactly one coordinate, say $j$, and that $\eta_j$ is obtained
            from $\xi_j$ by moving one block along an edge $e=(v,w)\in E$. By exchangeability within demes and
            Assumption~\ref{A: migration_convergence}, the one-step probability of this transition equals
            $c_N^{v_0}\mu_e$ up to an error that vanishes as $\varepsilon\downarrow 0$ (and an error of order
            $\PP(\mathcal{B}_{N,\varepsilon}(\vec\xi)\mid \vec\xi,\mathscr{E}_{N,\varepsilon})$ coming from the possibility
            that the step is not in the one-event regime). Thus there is a continuous $f_{\mu}$ with $f_{\mu}(0)=0$ such
            that
            \begin{equation}\label{E: mig_match_small}
                \sup_{\vec\xi,\vec\eta}
                \la{P_{N,n,\varepsilon,l}(\vec\xi,\vec\eta)-c_N^{v_0}R(\vec\xi,\vec\eta)}
                \le
                f_{\mu}(\varepsilon)c_N^{v_0}
                \;+\;
                \PP\lp{\mathcal{B}_{N,\varepsilon}(\vec\xi)\,\,\mid\,\,\vec\xi,\mathscr{E}_{N,\varepsilon}},
            \end{equation}
            whenever $\vec\eta$ is obtained from $\vec\xi$ by a single migration in one coordinate.
            
            Similarly, if $\vec\eta$ differs from $\vec\xi$ in exactly one coordinate and $\eta_j$ is obtained from
            $\xi_j$ by coalescing two blocks in deme $v$, then Assumption~\ref{A: comparable} implies that this
            transition occurs with probability $c_N^v$ up to an error negligible on the $c_N^{v_0}$ scale, and that
            $c_N^v/c_N^{v_0}\to c(v)$. Since we are conditioning on $(\widetilde V_N(0), m(0))\in B(\varepsilon)$, the
            contribution from non-Kingman coalescences in the same step is controlled by the triple-merger bound
            above, and the difference between the effective binary-merger rate under $\mathscr{E}_{N,\varepsilon}$ and the
            limiting Kingman rate $\kappa_v$ is absorbed into a continuous error function $f_\kappa$ with $f_\kappa(0)=0$.
            Consequently, for such single-coalescence transitions,
            \begin{equation}\label{E: coal_match_small}
                \la{P_{N,n,\varepsilon,l}(\vec\xi,\vec\eta)-c_N^{v_0}R(\vec\xi,\vec\eta)}
                \le
                f_{\kappa}(\varepsilon)c_N^{v_0}
                \;+\;
                \PP\lp{\mathcal{B}_{N,\varepsilon}(\vec\xi)\,\,\mid\,\,\vec\xi,\mathscr{E}_{N,\varepsilon}}.
            \end{equation}
            
            For all other off-diagonal $\vec\eta\neq \vec\xi$, \eqref{E: nonlocal_small} applies. Summing these bounds
            over $\vec\eta\neq \vec\xi$ and using \eqref{E: bad_event_prob_small} yields
            \begin{equation}\label{E: P_minus_linear_small}
                \la{P_{N,n,\varepsilon,l}-(I+c_N^{v_0}R)}_\infty
                \le
                f_1(\varepsilon)\,c_N^{v_0} \;+\; O\lp{(c_N^{v_0})^2}
            \end{equation}
            for some continuous $f_1$ with $f_1(0)=0$.
            
            Finally, by the triangle inequality, \eqref{E: exp_remainder_small} and \eqref{E: P_minus_linear_small},
            \begin{equation*}
                \|P_{N,n,\varepsilon,l}-A_N\|_\infty
                \le
                \|P_{N,n,\varepsilon,l}-(I+c_N^{v_0}R)\|_\infty
                +
                \|A_N-(I+c_N^{v_0}R)\|_\infty
                \le
                f_1(\varepsilon)c_N^{v_0} + \lp{C_R+O(1)}(c_N^{v_0})^2.
            \end{equation*}
            As $c_N^{v_0} \leq 1$, we have $(c_n^{v_0})^2 \leq c_N^{v_0}$, and so we get the desired bound.
            \end{proof}

    \subsection{Transitions at large jumps}\label{SS: big_jumps}

        In this section we prove Lemma~\ref{L: big_jumps}. The basic argument of the proof is that thea discrete version of the $(x,m,\xi)$-paintbox for finite $N$ works by dropping points onto the space $I_V$ but then removing a small part of the interval into which the point dropped. Because the amount one removes for each point one drops is $O(\frac{1}{N})$ and we are dropping at most $nl \in o(N)$ points, the picture from the continuous paintbox model and the discrete paintbox are almost identical.

        We proceed now with the proof of Lemma~\ref{L: big_jumps}.
        \begin{proof}[Proof of Lemma~\ref{L: big_jumps}]
            Fix $l\in\N$. For $(x,m)\in\Delta^V\times[0,1]^E$ and $\vec\xi,\vec\eta\in\En(V)^l$, write
            \begin{equation*}
                H_{N,l}(x,m)(\vec\xi,\vec\eta)
                :=
                \PP\lp{ (\overline\chi_i^{N,n}(k+1))_{i=1}^l=\vec\eta \,\,\mid\, \, (\overline\chi_i^{N,n}(k))_{i=1}^l=\vec\xi,\ (\widetilde{\mathcal V}(k), m(k))=(x,m)}.
            \end{equation*}
            Recall that $h_V:\Delta^V\times[0,1]^E\to\Delta^V\times[0,1]^E$ is the halving map on offspring frequencies, applied
            coordinatewise on $\Delta^V$ and leaving $m$ unchanged. Define
            \begin{equation*}
                H_l(x,m)(\vec\xi,\vec\eta)
                :=
                \prod_{i=1}^l q_n(x,m)(\xi_i,\eta_i),
                \qquad
                \lp{H_l\circ h_V}(x,m)
                :=
                H_l\lp{h_V(x,m)}.
            \end{equation*}
            
            We couple the $l$ copies as in Section~\ref{SS: coupling}: conditional on $\mathcal A_N$ the copies are i.i.d.\ and
            evolve on the \emph{same} pedigree realization, with independent Mendelian randomness. Fix $(x,m)\in\textup{supp}(\Phi_N)$
            and condition further on the event $\{(\widetilde{\mathcal V}(k), m(k))=(x,m)\}$, which specifies the migration
            proportions and the ordered offspring frequencies at generation $k$. Under this conditioning, the only remaining
            randomness driving the updates from time $k$ to time $k+1$ consists of the random choice of which individuals migrate
            (uniform without replacement within each deme and along each edge, with counts $\lfloor m_e N(v)\rfloor$), parental assignments, and the Mendelian segregation randomness determining, for each copy and each ancestral lineage, which parental chromosome is inherited.
            
            For each deme $v\in V$ write $S_v(m):=\sum_{e=(v,w)\in E} m_e\in[0,1]$ and define
            \begin{equation*}
                G(\delta)
                :=
                \lcb{(x,m)\in\Delta^V\times[0,1]^E:\ \forall v\in V,\ S_v(m)\le 1-\delta}.
            \end{equation*}
            Fix $\varepsilon>0$. By Assumption~\ref{A: no_total_migrations}, $\Phi\lp{S_v(m)=1}=0$ for each $v$, and hence the decreasing
            sets
            \begin{equation*}
                \{B(\varepsilon)^c\cap\{S_v(m)\ge 1-\delta\}\}_{\delta\downarrow 0}
            \end{equation*}
            have $\Phi$-mass tending to $0$. Therefore, choosing
            $\delta=\delta(\varepsilon)\in(0,1)$ sufficiently small, we may ensure
            \begin{equation}\label{E: truncation_choice}
                \Phi\lp{B(\varepsilon)^c\cap G(\delta)^c}\le \varepsilon.
            \end{equation}
            Throughout what follows we work on $\textup{supp}(\Phi_N)\cap G(\delta)$; the contribution of $G(\delta)^c$ will be paid for at the end.
            
            Fix $(x,m)\in \textup{supp}(\Phi_N)\cap G(\delta)$. Then for each deme $v$,
            \begin{equation*}
                \sum_{e=(v,w)\in E}\lfloor m_e N(v)\rfloor
                \;\le\;
                \lp{\sum_{e=(v,w)\in E} m_e}N(v)
                \;\le\; (1-\delta)N(v),
            \end{equation*}
            so every without-replacement sampling of parents/migrants within deme $v$ is performed from a set of size at least
            $\delta N(v)$. In particular, for each fixed $r\in\N$,
            the total-variation distance between drawing $r$ individuals uniformly without replacement from a population of size
            $M\ge \delta N(v)$ and drawing $r$ i.i.d.\ uniform individuals is bounded by the i.i.d.\ collision probability.
            
            We make this precise for the one-step update of a single copy. Fix $\xi\in\En(V)$ and consider the update
            $\xi\mapsto \eta$ over one discrete generation given $(x,m)$. Let $q_{N,n}(x,m)(\xi,\eta)$ denote the corresponding
            one-step transition probability of the \emph{single} copy ancestral process under the conditioning
            $\{(\widetilde{\mathcal V}(k), m(k))=(x,m)\}$. By replacing all without-replacement choices of migrants and parents
            by independent sampling with replacement, we change the law of the resulting parental assignments by at most
            $O(n^2/N)$ in total variation, uniformly over $(x,m)\in\textup{supp}(\Phi_N)\cap G(\delta)$
            since $n$ is fixed.
            
            The remaining issue is the diploid structure: when two sampled ancestral lineages choose the same individual, they
            do not necessarily coalesce in the next generation, since they may correspond to distinct parental chromosomes.
            Conditional on the realized counts $x^v$ of offspring per individual in deme $v$, the Mendelian randomness assigns
            each parental contribution to one of two chromosomes with probability $1/2$ each, independently across contributions.
            Equivalently, the relevant category probabilities for a paintbox description are obtained by splitting each
            individual-mass $x_i^v$ into two chromosome-masses $(x_i^v/2,x_i^v/2)$. This is precisely the effect of the halving
            map $h_V$. Thus, after replacing without-replacement sampling by sampling with replacement, the conditional one-step
            transition of a single copy is exactly the paintbox kernel $q_n\lp{h_V(x,m)}(\xi,\eta)$.
            
            Therefore there is a constant $C<\infty$, depending only on $n,|V|,|E|$ and $\delta$, such that
            \begin{equation}\label{E: single_copy_uniform_halving}
                \sup_{(x,m)\in\textup{supp}(\Phi_N)\cap G(\delta)}\ \max_{\xi,\eta\in\En(V)}
                \,\mid\, q_{N,n}(x,m)(\xi,\eta)-q_n\lp{h_V(x,m)}(\xi,\eta)\,\mid\, 
                \;\le\;
                \frac{C}{N}.
            \end{equation}
            
            We now return to the $l$-tuple. Conditional on $\mathcal A_N$ and on $\{(\widetilde{\mathcal V}(k), m(k))=(x,m)\}$,
            the Mendelian randomness is independent across copies, and the only source of dependence across copies is that they
            use the same finite populations of parents and migrants at generation $k+1$. Since each copy uses only $n$ ancestral
            lineages and $n$ is fixed, the same without-replacement estimate implies that the joint law of the parental assignments
            across the $l$ copies differs from that of $l$ independent copies by at most $O(l^2n^2/N)$ in total variation, uniformly
            in $(x,m)\in\textup{supp}(\Phi_N)\cap G(\delta)$. Consequently, there is a constant $C'<\infty$, depending only on
            $n,l,|V|,|E|$ and $\delta$, such that
            \begin{equation}\label{E: l_tuple_factorization_halving}
                \sup_{(x,m)\in\textup{supp}(\Phi_N)\cap G(\delta)}\ \max_{\vec\xi,\vec\eta\in\En(V)^l}
                \,\mid\, H_{N,l}(x,m)(\vec\xi,\vec\eta) - \prod_{i=1}^l q_{N,n}(x,m)(\xi_i,\eta_i)\,\mid\, 
                \;\le\;
                \frac{C'}{N}.
            \end{equation}
            
            Using \eqref{E: single_copy_uniform_halving} and the inequality
            $\,\mid\,\prod_{i=1}^l a_i-\prod_{i=1}^l b_i\,\mid\,\le l\max_i|a_i-b_i|$ for $a_i,b_i\in[0,1]$, we obtain
            \begin{equation*}
                \sup_{(x,m)\in\textup{supp}(\Phi_N)\cap G(\delta)}\ \max_{\vec\xi,\vec\eta\in\En(V)^l}
                \,\mid\, \prod_{i=1}^l q_{N,n}(x,m)(\xi_i,\eta_i) - \prod_{i=1}^l q_n\lp{h_V(x,m)}(\xi_i,\eta_i)\,\mid\, 
                \;\le\;
                \frac{lC}{N}.
            \end{equation*}
            Combining this with \eqref{E: l_tuple_factorization_halving} and absorbing finite-dimensional norm equivalences into the
            constants yields
            \begin{equation}\label{E: HNl_minus_Hl_halving_on_good}
                \sup_{(x,m)\in\textup{supp}(\Phi_N)\cap G(\delta)}
                \la{H_{N,l}(x,m)-H_l\circ h_V(x,m)}_\infty
                \;\le\;
                \frac{C''}{N},
            \end{equation}
            for some $C''<\infty$ depending only on $n,l,|V|,|E|$ and $\delta$.
            
            Since $\|H_{N,l}-H_l\circ h_V\|_\infty\le 2$ everywhere, we may split the integral as
            \begin{align}\label{E: integral_bound_split}
                \frac{1}{c_N^{v_0}}
                \int_{B(\varepsilon)^c}&
                    \la{H_{N,l}(x,m)-H_l\circ h_V(x,m)}_\infty \, d\Phi_N(x,m)\\
                \;&\le\;
                \frac{1}{c_N^{v_0}}
                \int_{B(\varepsilon)^c\cap G(\delta)}
                    \la{H_{N,l}-H_l\circ h_V}_\infty \, d\Phi_N \nonumber
                +\;
                \frac{2}{c_N^{v_0}}\Phi_N\lp{B(\varepsilon)^c\cap G(\delta)^c}. \nonumber
            \end{align}
            By \eqref{E: HNl_minus_Hl_halving_on_good},
            \begin{equation}\label{E: integral_bound_good}
                \frac{1}{c_N^{v_0}}
                \int_{B(\varepsilon)^c\cap G(\delta)}
                    \la{H_{N,l}-H_l\circ h_V}_\infty \, d\Phi_N
                \;\le\;
                \frac{C''}{N}\cdot \frac{1}{c_N^{v_0}}\Phi_N\lp{B(\varepsilon)^c}.
            \end{equation}
            Since $B(\varepsilon)^c$ is bounded away from $\mathbf{0}_{V,E}$, Assumption~\ref{A: rarity} implies that
            $\frac{1}{c_N^{v_0}}\Phi_N\lp{B(\varepsilon)^c}$ is bounded in $N$. Hence the right-hand side of \eqref{E: integral_bound_good} tends to $0$
            as $N\to\infty$.
            
            For the second term in \eqref{E: integral_bound_split}, again because $B(\varepsilon)^c$ is bounded away from $\mathbf{0}_{V,E}$,
            Assumption~\ref{A: rarity} gives
            \begin{equation}\label{E: limsup_bound}
                \limsup_{N \to \infty}\frac{1}{c_N^{v_0}}\Phi_N\lp{B(\varepsilon)^c\cap G(\delta)^c}
                \leq
                \Phi\lp{B(\varepsilon)^c\cap G(\delta)^c},
            \end{equation}
            By \eqref{E: truncation_choice} the right hand side of \eqref{E: limsup_bound} is at most $\varepsilon$, and therefore
            \begin{equation*}
                \limsup_{N\to\infty}\frac{2}{c_N^{v_0}}\Phi_N\lp{B(\varepsilon)^c\cap G(\delta)^c}
                \;\le\; 2\varepsilon.
            \end{equation*}
            Since $\varepsilon>0$ was arbitrary, combining with \ref{E: integral_bound_split} completes the proof.
        \end{proof}

    \subsection{Combining the large and small scale}\label{SS: combining_scales}

        Combining the two scales of Lemma~\ref{L: small_jumps} and Lemma~\ref{L: big_jumps} allows us to conclude that the transition
        kernel of the joint process $\lp{\overline{\chi}_i^{N,n}}_{i=1}^l$ is arbitrarily close, on compact time intervals, to that of
        a Markov process with infinitesimal generator
        \begin{equation}\label{E: L_defn}
            \mathcal{L}
            :=
            R
            +
            \int_{\Delta^V\times[0,1]^E}
                \lp{H_l\circ h_V(x,m)-I}\,d\Phi(x,m),
        \end{equation}
        where $h_V$ is the halving map encoding the diploid splitting of each individual mass into two chromosome masses.
        
        \begin{lemma}[Small-event bound for the diploid paintbox kernel]\label{L: H_minus_I_bound}
            There is a constant $C_{n,l,V,E}<\infty$ such that for all $(x,m)\in\Delta^V\times[0,1]^E$ and all $\vec{\xi}\in\En(V)^l$,
            \begin{equation}\label{E: H_minus_I_bound}
                \sum_{\vec{\eta}\neq \vec{\xi}} \lp{H_l\circ h_V(x,m)}(\vec{\xi},\vec{\eta})
                \;\le\;
                C_{n,l,V,E}\lp{\|m\|_\infty+\sup_{v\in V}\langle x^v,x^v\rangle}.
            \end{equation}
            In particular,
            \begin{equation}\label{E: H_minus_I_bound_norm}
                \la{H_l\circ h_V(x,m)-I}_\infty
                \;\le\;
                C_{n,l,V,E}\lp{\|m\|_\infty+\sup_{v\in V}\langle x^v,x^v\rangle}.
            \end{equation}
            Moreover,
            \begin{equation}\label{E: H_minus_I_diag_bound}
                \la{H_l\circ h_V(x,m)-I}_\infty
                \;\le\;
                l\,\sup_{\xi\in\En(V)}\lp{1-q_n\lp{h_V(x,m)}(\xi,\xi)}.
            \end{equation}
        \end{lemma}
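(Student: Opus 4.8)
The plan is to reduce all three inequalities to a single one-copy estimate on the diagonal defect $1-q_n\lp{h_V(x,m)}(\xi,\xi)$ of the paintbox kernel, and then to recombine the $l$ copies by an elementary product inequality. First I would record that $H_l\circ h_V(x,m)$ is a stochastic matrix on $\En(V)^l$ whose diagonal entry factorizes, so that the off-diagonal row mass is
\[
    \sum_{\vec\eta\neq\vec\xi}\lp{H_l\circ h_V(x,m)}(\vec\xi,\vec\eta)
    = 1-\prod_{i=1}^l q_n\lp{h_V(x,m)}(\xi_i,\xi_i),
\]
and, since the rows of $H_l\circ h_V-I$ sum to zero, $\norm{H_l\circ h_V(x,m)-I}_\infty$ equals this off-diagonal mass up to the fixed universal factor coming from the matrix norm. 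The inequality $1-\prod_i a_i\le\sum_i(1-a_i)$ for $a_i\in[0,1]$ then gives
\[
    1-\prod_{i=1}^l q_n\lp{h_V(x,m)}(\xi_i,\xi_i)
    \le \sum_{i=1}^l\lp{1-q_n\lp{h_V(x,m)}(\xi_i,\xi_i)}
    \le l\,\sup_{\xi\in\En(V)}\lp{1-q_n\lp{h_V(x,m)}(\xi,\xi)},
\]
which is precisely \eqref{E: H_minus_I_diag_bound}. Hence \eqref{E: H_minus_I_bound} and \eqref{E: H_minus_I_bound_norm} will follow once I bound the single-copy defect $1-q_n\lp{h_V(x,m)}(\xi,\xi)$ by a constant multiple of $\norm{m}_\infty+\sup_v\langle x^v,x^v\rangle$.

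For that single-copy bound I would read $1-q_n\lp{h_V(x,m)}(\xi,\xi)$ off the paintbox as the probability that the $\lp{h_V(x,m),\xi}$-paintbox changes $\xi$, i.e.\ that at least one block leaves its deme or at least one pair of blocks is identified, and then apply a union bound over these two mechanisms. Each of the at most $n$ blocks of $\xi$, sitting in some deme $v$, leaves $v$ with probability $\sum_{e=(v,w)\in E}m_e\le\la{E}\,\norm{m}_\infty$, so the total migration probability is at most $n\,\la{E}\,\norm{m}_\infty$. For coalescence, two blocks with starting demes $v_a,v_b$ are identified only if they are sent to a common destination deme $w$ and land in the same offspring interval there; given both reach $w$ this happens with probability $\sum_k\lp{h(x^w)_k}^2=\tfrac12\langle x^w,x^w\rangle$, using that $h$ replaces each coordinate $x_k^w$ by two coordinates of mass $\tfrac12 x_k^w$. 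Summing over destinations and using $\sum_w m_{(v_a,w)}m_{(v_b,w)}\le 1$ bounds the per-pair probability by $\tfrac12\sup_v\langle x^v,x^v\rangle$, and a union bound over the at most $\binom{n}{2}$ pairs gives a coalescence contribution at most $\tfrac12\binom{n}{2}\sup_v\langle x^v,x^v\rangle$. Adding the two contributions yields $1-q_n\lp{h_V(x,m)}(\xi,\xi)\le C_{n,V,E}\lp{\norm{m}_\infty+\sup_v\langle x^v,x^v\rangle}$ uniformly in $\xi$.

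Combining this with the product inequality of the first paragraph gives \eqref{E: H_minus_I_bound} with $C_{n,l,V,E}=l\,C_{n,V,E}$, and \eqref{E: H_minus_I_bound_norm} then follows after absorbing the fixed stochastic-matrix factor into the constant. The argument is essentially a sequence of union bounds, so I do not expect a genuine obstacle; the one point demanding care is the coalescence estimate, where one must respect the destination-deme structure of the paintbox (a pair is identified only once both blocks migrate to the \emph{same} deme, which is why one sums over destinations and invokes $\sum_w m_{(v,w)}\le 1$) and correctly track the diploid halving, which is exactly where the factor $\tfrac12$ in $\langle h(x^v),h(x^v)\rangle=\tfrac12\langle x^v,x^v\rangle$ enters.
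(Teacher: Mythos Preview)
Your proposal is correct and follows essentially the same approach as the paper: both arguments reduce to bounding the one-copy diagonal defect $1-q_n(h_V(x,m))(\xi,\xi)$ via a union bound over migration events (at most $n|E|\|m\|_\infty$) and merger events (controlled by $\langle h(x^v),h(x^v)\rangle=\tfrac12\langle x^v,x^v\rangle$), and then pass to $l$ copies using $1-\prod_i a_i\le\sum_i(1-a_i)$. The only cosmetic differences are that you prove \eqref{E: H_minus_I_diag_bound} first and derive the other two from it (the paper does them in the opposite order), and your coalescence estimate is phrased slightly more generally, summing over possible destination demes rather than restricting to pairs already sharing a deme; this yields a marginally better constant but is otherwise the same union-bound calculation.
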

        
        \begin{proof}
            Fix $(x,m)$ and $\vec{\xi}\in\En(V)^l$. Write $\xi^{(j)}$ for the $j$th coordinate of $\vec{\xi}$.
            In copy $j$, conditional on $h_V(x,m)$, a change $\xi^{(j)}\mapsto\eta^{(j)}\neq \xi^{(j)}$ occurs only if at least one of
            the following events happens in the $\lp{h_V(x,m),\xi^{(j)}}$-paintbox:
            \begin{itemize}
                \item some block migrates along an edge, or
                \item at least one pair of blocks in a common deme falls into the same $h_V(x)^v$-interval and merges.
            \end{itemize}
            Let $b_j\le n$ be the number of blocks in $\xi^{(j)}$, and let $b_{j,v}\le b_j$ be the number of blocks in deme $v$.
        
            For the migration contribution, in copy $j$ each block in deme $v$ migrates with probability
            \begin{equation*}
                \sum_{w:(v,w)\in E} m_{(v,w)}
                \le
                |E|\,\|m\|_\infty,
            \end{equation*}
            since $h_V$ leaves $m$ unchanged. A union bound over at most $b_j\le n$ blocks yields
            \begin{equation*}
                \PP\lp{\text{a migration occurs in copy $j$}\mid h_V(x,m),\xi^{(j)}}
                \le
                n\,|E|\,\|m\|_\infty.
            \end{equation*}
        
            For the merger contribution, in deme $v$ two specified blocks coalesce only if their paintbox points fall in the same
            $h_V(x)^v$-interval, which has probability $\langle h_V(x)^v,h_V(x)^v\rangle$. Since $h_V$ splits each mass into two halves,
            we have $\langle h_V(x)^v,h_V(x)^v\rangle=\frac12\langle x^v,x^v\rangle$, and in particular
            $\langle h_V(x)^v,h_V(x)^v\rangle\le \langle x^v,x^v\rangle$. A union bound over the at most
            $\binom{b_{j,v}}{2}\le \binom{n}{2}$ pairs in deme $v$, and then over $v\in V$, gives
            \begin{equation*}
                \PP\lp{\text{a merger occurs in copy $j$}\mid h_V(x,m),\xi^{(j)}}
                \le
                |V|\,\binom{n}{2}\,\sup_{v\in V}\langle x^v,x^v\rangle.
            \end{equation*}
        
            If $\vec{\eta}\neq \vec{\xi}$ then at least one copy $j\in[l]$ must change. Therefore, by a union bound over $j\in[l]$,
            \begin{equation*}
                \sum_{\vec{\eta}\neq \vec{\xi}} \lp{H_l\circ h_V(x,m)}(\vec{\xi},\vec{\eta})
                =
                \PP\lp{\vec{\eta}\neq \vec{\xi}\mid h_V(x,m),\vec{\xi}}
                \le
                l\lp{n\,|E|\,\|m\|_\infty + |V|\,\binom{n}{2}\sup_{v\in V}\langle x^v,x^v\rangle},
            \end{equation*}
            which proves \eqref{E: H_minus_I_bound} with an appropriate constant $C_{n,l,V,E}$. \eqref{E: H_minus_I_bound_norm} then follows from \eqref{E: H_minus_I_bound}.
        
            For \eqref{E: H_minus_I_diag_bound}, note that for each $\vec{\xi}\in\En(V)^l$,
            \begin{equation*}
                \sum_{\vec{\eta}\neq\vec{\xi}} \lp{H_l\circ h_V(x,m)}(\vec{\xi},\vec{\eta})
                =
                1-\lp{H_l\circ h_V(x,m)}(\vec{\xi},\vec{\xi})
                =
                1-\prod_{i=1}^l q_n\lp{h_V(x,m)}(\xi^{(i)},\xi^{(i)}).
            \end{equation*}
            Using $1-\prod_{i=1}^l a_i \le \sum_{i=1}^l (1-a_i)$ for $a_i\in[0,1]$ gives
            \begin{equation*}
                1-\lp{H_l\circ h_V(x,m)}(\vec{\xi},\vec{\xi})
                \le
                \sum_{i=1}^l \lp{1-q_n\lp{h_V(x,m)}(\xi^{(i)},\xi^{(i)})}
                \le
                l\,\sup_{\xi\in\En(V)}\lp{1-q_n\lp{h_V(x,m)}(\xi,\xi)},
            \end{equation*}
            and taking the supremum over $\vec{\xi}$ yields \eqref{E: H_minus_I_diag_bound}.
        \end{proof}
        
        We now proceed with the proof of Lemma~\ref{L: combining_scales}.
        
        \begin{proof}[Proof of Lemma~\ref{L: combining_scales}]
            Fix $T>0$ and write $h_N:=c_N^{v_0}$. Let $\widehat{\chi}^{N,n,l}$ be the annealed $\mathcal{H}_{n,l}(V)$-valued Markov
            chain from Section~\ref{S: coupling}, let $\mathsf{D}_{n,l}(V):=\iota(\En(V)^l)$, and let
            $\pi=cd_V:\mathcal{H}_{n,l}(V)\to\En(V)^l$.
        
            Define the return times to $\mathsf{D}_{n,l}(V)$ by
            \begin{equation*}
                \tau_0:=0,
                \qquad
                \tau_{q+1}:=\inf\{k>\tau_q:\widehat{\chi}^{N,n,l}(k)\in\mathsf{D}_{n,l}(V)\},
                \qquad q\in\Z_+,
            \end{equation*}
            and define the embedded chain
            \begin{equation*}
                Y^{N,l}(q):=\pi\lp{\widehat{\chi}^{N,n,l}(\tau_q)}\in\En(V)^l,
                \qquad q\in\Z_+.
            \end{equation*}
            By the strong Markov property, $Y^{N,l}$ is a time-homogeneous Markov chain on the finite space $\En(V)^l$.
            Let $\mathbf{P}_{N,l}$ be its one-step transition matrix, and define the time-rescaled process
            \begin{equation*}
                \overline{Y}^{\,N,l}(t):=Y^{N,l}\!\lp{\lfloor t/h_N\rfloor},
                \qquad t\ge 0.
            \end{equation*}
            Let $P_{N,l}(t)$ denote the transition kernel of $\overline{Y}^{\,N,l}$. Then
            \begin{equation}\label{E: P_as_power_new}
                P_{N,l}(t)
                \;=\;
                \mathbf{P}_{N,l}^{\lfloor t/h_N\rfloor},
                \qquad t\ge 0.
            \end{equation}
        
            Lemma~\ref{L: outside_time_negligible} and Lemma~\ref{L: reduction_trace} imply that, on $[0,T]$, the finite-dimensional
            distributions of $\lp{cd_V(\overline{\chi}_i^{N,n})}_{i=1}^l$ coincide with those of $\overline{Y}^{\,N,l}$ up to an
            error that is $o_N(1)$ uniformly over initial states. Consequently, it suffices to establish
            \eqref{E: kernel_approximation} for the kernel $P_{N,l}(t)$ defined above.
        
            Fix $\varepsilon>0$ and set $\mathscr{E}_{N,\varepsilon}:=\{(\widetilde V_N(0),m(0))\in B(\varepsilon)\}$.
            Lemma~\ref{L: reduction_trace} implies that one step of $Y^{N,l}$ corresponds to one reproduction step of the underlying
            dynamics, and therefore, by conditioning on $\mathscr{E}_{N,\varepsilon}$ and using the time-homogeneity of the i.i.d.\
            environment across generations, we have
            \begin{equation}\label{E: one_step_decomp_new}
                \mathbf{P}_{N,l}
                \;=\;
                \Phi_N\!\lp{B(\varepsilon)}\,P_{N,n,\varepsilon,l}
                \;+\;
                \int_{B(\varepsilon)^c} H_{N,l}(x,m)\,d\Phi_N(x,m),
            \end{equation}
            where $P_{N,n,\varepsilon,l}$ is as in Lemma~\ref{L: small_jumps} and $H_{N,l}$ is as in Lemma~\ref{L: big_jumps}.
        
            Define the truncated operator
            \begin{equation}\label{E: truncated_generator_new}
                L^\varepsilon
                \;:=\;
                R
                \;+\;
                \int_{B(\varepsilon)^c}\lp{H_l\circ h_V(x,m)-I}\,d\Phi(x,m),
            \end{equation}
            viewed as a matrix on $C_b(\En(V)^l)$. Note that $\Phi(B(\varepsilon)^c)<\infty$.
        
            Lemma~\ref{L: small_jumps} provides a continuous $f$ with $f(0)=0$ such that
            \begin{equation}\label{E: small_jump_kernel_bound_new}
                \la{P_{N,n,\varepsilon,l}-e^{h_NR}}_\infty
                \le
                f(\varepsilon)\,h_N.
            \end{equation}
            Assumption~\ref{A: rarity} implies that for fixed $\varepsilon>0$,
            \begin{equation}\label{E: mass_outside_ball_new}
                \frac{1}{h_N}\,\Phi_N\!\lp{B(\varepsilon)^c}
                \;\longrightarrow\;
                \Phi\!\lp{B(\varepsilon)^c}.
            \end{equation}
        
            Lemma~\ref{L: big_jumps} implies that for fixed $\varepsilon>0$,
            \begin{equation}\label{E: big_jump_integral_error_new}
                \left\|
                    \frac{1}{h_N}\int_{B(\varepsilon)^c}\lp{H_{N,l}(x,m)-H_l\circ h_V(x,m)}\,d\Phi_N(x,m)
                \right\|_\infty
                \;\longrightarrow\;0.
            \end{equation}
            Moreover, each entry $(x,m)\mapsto \lp{H_l\circ h_V(x,m)}(\vec{\xi},\vec{\eta})$ is bounded and continuous on
            $\Delta^V\times[0,1]^E$, and $B(\varepsilon)^c$ is bounded away from $0_{V,E}$, so Assumption~\ref{A: rarity} yields
            \begin{equation}\label{E: big_jump_integral_Hl_conv_new}
                \left\|
                    \frac{1}{h_N}\int_{B(\varepsilon)^c} H_l\circ h_V(x,m)\,d\Phi_N(x,m)
                    \;-\;
                    \int_{B(\varepsilon)^c} H_l\circ h_V(x,m)\,d\Phi(x,m)
                \right\|_\infty
                \;\longrightarrow\;0.
            \end{equation}
            Combining \eqref{E: big_jump_integral_error_new} and \eqref{E: big_jump_integral_Hl_conv_new} gives
            \begin{equation}\label{E: big_jump_integral_conv_new}
                \frac{1}{h_N}\int_{B(\varepsilon)^c} H_{N,l}(x,m)\,d\Phi_N(x,m)
                \;\longrightarrow\;
                \int_{B(\varepsilon)^c} H_l\circ h_V(x,m)\,d\Phi(x,m).
            \end{equation}
        
            Expanding \eqref{E: one_step_decomp_new} using $\Phi_N(B(\varepsilon))=1-\Phi_N(B(\varepsilon)^c)$, inserting
            \eqref{E: small_jump_kernel_bound_new}, and using $e^{h_NR}=I+h_NR+O(h_N^2)$ on the finite space $\En(V)^l$ yields
            \begin{equation}\label{E: one_step_first_order_new}
                \mathbf{P}_{N,l}
                \;=\;
                I
                \;+\;
                h_NR
                \;+\;
                \int_{B(\varepsilon)^c}\lp{H_{N,l}(x,m)-I}\,d\Phi_N(x,m)
                \;+\;
                h_N\,\mathrm{Err}_{N,\varepsilon},
            \end{equation}
            where $\|\mathrm{Err}_{N,\varepsilon}\|_\infty\le C\,f(\varepsilon)+o_N(1)$ for some $C<\infty$.
            Dividing the integral term by $h_N$ and using \eqref{E: big_jump_integral_conv_new} gives, for fixed $\varepsilon$,
            \begin{equation}\label{E: one_step_close_to_euler_new}
                \la{\mathbf{P}_{N,l}-(I+h_NL^\varepsilon)}_\infty
                \le
                h_N\lp{C\,f(\varepsilon)+o_N(1)}.
            \end{equation}
        
            Let $t\in[0,T]$ and set $k:=\lfloor t/h_N\rfloor$. Using \eqref{E: P_as_power_new} and the telescoping identity,
            \begin{equation}\label{E: power_diff_bound_new}
                \la{\mathbf{P}_{N,l}^k-(I+h_NL^\varepsilon)^k}_\infty
                \le
                k\,\la{\mathbf{P}_{N,l}-(I+h_NL^\varepsilon)}_\infty
                \le
                T\lp{C\,f(\varepsilon)+o_N(1)},
            \end{equation}
            and hence
            \begin{equation}\label{E: euler_scheme_error_from_model_new}
                \sup_{0\le t\le T}
                \la{P_{N,l}(t)-(I+h_NL^\varepsilon)^{\lfloor t/h_N\rfloor}}_\infty
                \le
                T\lp{C\,f(\varepsilon)+o_N(1)}.
            \end{equation}
        
            Since $L^\varepsilon$ is bounded on the finite space $\En(V)^l$, $e^{h_NL^\varepsilon}=I+h_NL^\varepsilon+O(h_N^2)$ in
            $\|\cdot\|_\infty$. Therefore, for $kh_N\le T$,
            \begin{equation*}
                \la{(I+h_NL^\varepsilon)^k-e^{kh_NL^\varepsilon}}_\infty
                \le
                C_{\varepsilon,T}\,h_N
            \end{equation*}
            for some $C_{\varepsilon,T}<\infty$. Combining with \eqref{E: euler_scheme_error_from_model_new} yields
            \begin{equation}\label{E: approx_to_truncated_semigroup_new}
                \sup_{0\le t\le T}\la{P_{N,l}(t)-e^{tL^\varepsilon}}_\infty
                \le
                T\,C\,f(\varepsilon) \;+\; T\,o_N(1) \;+\; C_{\varepsilon,T}h_N.
            \end{equation}
        
            By \eqref{E: H_minus_I_diag_bound} and the finiteness of $\En(V)$,
            \begin{equation*}
                \la{H_l\circ h_V(x,m)-I}_\infty
                \le
                l\,\sup_{\xi\in\En(V)}\lp{1-q_n\lp{h_V(x,m)}(\xi,\xi)}
                \le
                l\,\sum_{\xi\in\En(V)}\lp{1-q_n\lp{h_V(x,m)}(\xi,\xi)}.
            \end{equation*}
            Hence Assumption~\ref{A: integrability} implies that
            \begin{equation*}
                \int_{\Delta^V\times[0,1]^E}\la{H_l\circ h_V(x,m)-I}_\infty\,d\Phi(x,m)<\infty,
            \end{equation*}
            and therefore
            \begin{equation*}
                \la{\int_{B(\varepsilon)}\lp{H_l\circ h_V(x,m)-I}\,d\Phi(x,m)}_\infty
                \;\longrightarrow\;0
                \qquad\text{as }\varepsilon\downarrow 0
            \end{equation*}
            by dominated convergence.
        
            Since the matrix exponential is Lipschitz on bounded sets,
            \begin{equation*}
                \sup_{0\le t\le T}\la{e^{tL^\varepsilon}-e^{t\mathcal{L}}}_\infty
                \le
                T\,e^{T\|\mathcal{L}\|_\infty}\,
                \la{\int_{B(\varepsilon)}\lp{H_l\circ h_V(x,m)-I}\,d\Phi(x,m)}_\infty.
            \end{equation*}
        
            Combining this estimate with \eqref{E: approx_to_truncated_semigroup_new} and using $h_N\to 0$
            (Assumption~\ref{A: continuous}) yields a continuous function $g_T:\R_+\to\R_+$ with $g_T(0)=0$ such that, for all
            sufficiently large $N$,
            \begin{equation*}
                \sup_{0\le t\le T}\la{P_{N,l}(t)-e^{t\mathcal{L}}}_\infty
                \le
                g_T(\varepsilon).
            \end{equation*}
            By taking $N$ to infinity and then $\varepsilon$ to $0$, the result follows.
        \end{proof}

\section{Discussion}\label{S: discussion}

    In this section we briefly outline some implications of this work for inference methods based on the site frequency spectrum in Section~\ref{SS: SFS}. We then discuss future mathematical works for appropriate scaling limits of infinite graph models in Section~\ref{SS: scaling_limit} and moment duals to quenched coalescent processes in Section~\ref{SS: forward_time}.

    \subsection{Implications for the site frequency spectrum}\label{SS: SFS}

        By an argument akin to that in \cite[Section 5.2]{nfw25_2} one can see that the class of integral functionals governing the site-frequency spectrum, namely
        \begin{equation*}
            \tau^{n,r}(\chi^n) = \int_0^\infty \#\{C \in \chi^n(s): |C| = r\}ds,
        \end{equation*}
        converge with the population model when $G$ is a connected graph with non-vanishing migration rates, i.e.
        \begin{equation*}
            \mathbb{E}\lb{\lp{\tau^{n,r}\lp{\overline{\chi}^{N,n}}}_{r=1}^s \,\mid\, \mathcal{A}_N} \toL \mathbb{E}\lb{\lp{\tau^{n,r}\lp{\chi^n}}_{r=1}^s \,\mid\, \Psi}.
        \end{equation*}
        The limiting model of \cite{WiltonEtAl2017} is precisely that of Example~\ref{P: two_deme_WF}.
        As $\Psi$ for this model has no atoms, the site-frequency spectrum across unlinked loci are \textit{independent} in the large population limit. In particular, the pairwise coalescence times for samples from unlinked loci are independent in the large population limit. However, this is inconsistent, but only \textit{ostensibly}, with the simulations of \cite{WiltonEtAl2017}.

         In Figure 3 of \cite{WiltonEtAl2017} there are clear and marked differences between the predictions for the pairwise coalescence time from the structured coalescent and the conditional coalescent. One explanation for this discrepancy is that the size of migrations are not so negligibly small at the scale of the simulation that their effect vanishes, as it would if the population size were taken sufficiently large. The simulations undertaken in the figure are with a finite population model, where $N = 100$ or $N = 1000$.  While the rescaled migration rate between demes in the large population limit is suitably small, for any finite population size we should really take the quenched coalescent model as if $\Phi$ did not concentrate at $0^{\otimes 2} \otimes \delta_0^{\otimes 2}$ but instead at some $0^{\otimes 2} \otimes M^{\otimes 2}$, where $M$ is suitably concentrated close to zero without vanishing. A naive model which captures this behavior is precisely that of Proposition~\ref{P: beta_migration} where we take $b$ to be large and $\alpha_v = 2$ for each $v$. An applied work investigating precisely how to do this in a robust manner so that pedigree effects could be detected in genetic inference would be of interest.

    \subsection{Scaling limits of infinite graph models}\label{SS: scaling_limit}

        While this work concerns itself with finite graphs, the extension to infinite graph analogue, as discussed in Remark~\ref{R: infinite_graph} is more or less straightforward. What requires quite different techniques and ingenuity would be taking a suitable two-scale limit, where we take an infinite graph model, like a stepping stone model, and show convergence to genealogies evolving in the continuum by rescaling both time and space. In Section~\ref{S: examples_and_apps}, we constructed what we called a discrete approximation of the genealogy of a $\Xi$ Fleming Viot process. While this is outside the scope of this work, we briefly outline the scaling limit one could take. If this can be done, the well-known pain in the torus \cite[Section 6.4]{etheridge_book} could be conquered from an individual based model.
        
        Fix a finite measure $\nu$ on $\R_+$ such that $\int_{\R_+} r\,d\nu(r)<\infty$, a parameter $\lambda>0$, and a finite measure
        $\Xi$ on $\Delta$ with $\int_{\Delta}\langle x,x\rangle\,d\Xi(x)<\infty$. For each $N\in\N$ let
        $V_N:=\Z^2/\langle L_1(N),L_2(N)\rangle_{\Z}$ be the discrete two-torus described above, with lattice spacing $\rho_N>0$ and
        baseline migration parameter $\sigma_N>0$. Let $\chi^{N,n}$ be the $n$-sample ancestral process on the pedigree associated to
        the discrete approximation of the spatial $\Xi$-Fleming--Viot model on $V_N$, run on the coalescence timescale
        $\lp{c_N^{v_0}}^{-1}$ as in \eqref{E: ancestral_process_defnition}, and write $\overline{\chi}^{N,n}$ for the rescaled process.
        
        Assume that the geometric scaling is such that $\rho_N\to 0$ and
        \begin{equation*}
        \rho_N^2\,L_1(N)\to \ell_1\in(0,\infty),
        \qquad
        \rho_N^2\,L_2(N)\to \ell_2\in(0,\infty),
        \end{equation*}
        so that the embedded lattice $\rho_N V_N$ converges in a suitable sense to the continuum torus
        $\mathbb{T}^2_{\ell_1,\ell_2}:=\R^2/\langle (\ell_1,0),(0,\ell_2)\rangle_{\Z}$.
        Assume further that the baseline migration converges to Brownian motion in the sense that, for each fixed $t\ge 0$,
        a single ancestral lineage under baseline dynamics satisfies
        \begin{equation*}
        \rho_N\,\widehat{X}_1\lp{\big\lfloor t\,\rho_N^{-2}\sigma_N^{-1}\big\rfloor}
        \toL
        B(t)
        \qquad\text{in }\mathcal{D}\lp{\R_+,\mathbb{T}^2_{\ell_1,\ell_2}},
        \end{equation*}
        where $B$ is Brownian motion on $\mathbb{T}^2_{\ell_1,\ell_2}$.
        
        Finally assume that the extreme-event mechanism is tuned so that, under the diffusive scaling, the point process of extreme
        events converges to a Poisson point process $\Gamma$ on $\R_+\times\mathbb{T}^2_{\ell_1,\ell_2}\times\R_+$ with intensity measure
        \begin{equation*}
        dt\otimes dx\otimes d\nu(r),
        \end{equation*}
        and that, conditional on an extreme event centered at $x\in\mathbb{T}^2_{\ell_1,\ell_2}$ with radius $r$, all lineages in the
        ball $B(x,r)$ are instantaneously relocated to $x$ and then undergo an instantaneous multiple merger governed by the $\Xi$
        paintbox.
        
        Then, for each fixed sample size $n$, the quenched law of the rescaled discrete genealogical process should converge to that of the
        spatial $\Xi$-Fleming-Viot genealogy on $\mathbb{T}^2_{\ell_1,\ell_2}$: more precisely, there should exist a coupling under which
        \begin{equation*}
            \PP\lp{\overline{\chi}^{N,n}\in\cdot\,\,\mid\,\,\mathcal{A}_N}
            \toL
            \PP\lp{\chi^n_{\mathrm{FV}}\in\cdot\,\,\mid\,\,\Gamma},
        \end{equation*}
        weakly in distribution as random probability measures on
        $\mathcal{D}\lp{\R_+,\En\lp{\mathbb{T}^2_{\ell_1,\ell_2}}}$, where $\chi^n_{\mathrm{FV}}$ is the $\En(\mathbb{T}^2_{\ell_1,\ell_2})$-valued
        Markov process obtained by evolving $n$ independent Brownian motions on $\mathbb{T}^2_{\ell_1,\ell_2}$ between atoms of $\Gamma$, and
        each atom $(t,x,r)$ of $\Gamma$ performing an instantaneous merger among all blocks whose spatial locations lie in
        $B(x,r)$ and relocating them to $x$, and coalescing when two Brownian motions intersect. Of course by adding in an additional sampling mechanism to the discrete model one would expect to capture simultaneous multi-mergers as well.
         
    \subsection{Forward-in-time processes}\label{SS: forward_time}

        Another direction of interest is understanding implications of the quenched structured coalescent for diffusion approximations of allele frequencies. Recall that there is a moment duality between the block counting process for the Kingman coalescent, which counts the number of extant lineages backwards in time, and the Wright-Fisher diffusion, which models the frequency of a neutral allele forward in time. A natural conjecture for our quenched coalescent is that the macroscopic events captured by $\Psi$ should give rise to jumps in the forward in time diffusion process. This has already been observed in the case of $\Xi$ coalescents with selection \cite{ag18} and for annealed multi-type $\Xi$ coalescents \cite{daipra25}. The new contribution would be in describing precisely how the allele frequencies at unlinked loci are coupled by $\Psi$.

\section*{Acknowledgements}
    We thank Louis Fan, Yuval Simons, and John Wakeley for helpful comments on an early version of this manuscript.

\bibliographystyle{alpha}
\bibliography{main} 

@article{kingman1982,
title = {The coalescent},
journal = {Stochastic Processes and their Applications},
volume = {13},
number = {3},
pages = {235-248},
year = {1982},
issn = {0304-4149},
doi = {https://doi.org/10.1016/0304-4149(82)90011-4},
url = {https://www.sciencedirect.com/science/article/pii/0304414982900114},
author = {J.F.C. Kingman},
keywords = {Genetical models, random equivalent relations, exchangeability, jump chain, haploid genealogy, coupling, Markov process}
}

@article{etheridge11,
  author  = {Etheridge, Alison M. and V{\'e}ber, Amandine},
  title   = {The spatial $\Lambda$-Fleming--Viot process on a large torus: Genealogies in the presence of recombination},
  journal = {The Annals of Applied Probability},
  volume  = {22},
  number  = {6},
  pages   = {2165--2209},
  year    = {2012},
  month   = {December}
}

@book{etheridge_book,
  author    = {Etheridge, Alison},
  title     = {Some Mathematical Models from Population Genetics},
  subtitle  = {{\'E}cole d'{\'E}t{\'e} de Probabilit{\'e}s de Saint-Flour XXXIX--2009},
  series    = {Lecture Notes in Mathematics},
  publisher = {Springer},
  address   = {Berlin, Heidelberg},
  edition   = {1},
  year      = {2011},
  isbn      = {978-3-642-16631-0},
  eissn     = {1617-9692},
  doi       = {10.1007/978-3-642-16632-7},
  url       = {https://doi.org/10.1007/978-3-642-16632-7},
  pages     = {VIII, 119}
}

@article{ag18,
author = {Gonzalez-Casanova, Adrian and Spanò, Dario},
year = {2018},
month = {02},
pages = {250-284},
title = {Duality and fixation in $\Xi$-Wright–Fisher processes with frequency-dependent selection},
volume = {28},
journal = {The Annals of Applied Probability},
doi = {10.1214/17-AAP1305}
}

@article{sagitov1999general,
  title={The general coalescent with asynchronous mergers of ancestral lines},
  author={Sagitov, Serik},
  journal={Journal of Applied Probability},
  volume={36},
  number={4},
  pages={1116--1125},
  year={1999},
  publisher={Cambridge University Press}
}

@article{pitman1999,
author = {Jim Pitman},
title = {{Coalescents With Multiple Collisions}},
volume = {27},
journal = {The Annals of Probability},
number = {4},
publisher = {Institute of Mathematical Statistics},
pages = {1870 -- 1902},
keywords = {coagulation,fragmentation, Ewens sampling formula, Exchangeable random partition, random discrete distribution, ranked frequencies, stable subordinator, Time reversal, two-parameter Poisson –Dirichlet},
year = {1999},
doi = {10.1214/aop/1022874819},
URL = {https://doi.org/10.1214/aop/1022874819}
}

@article{birkner2018,
  title={Coalescent results for diploid exchangeable population models},
  author={Birkner, Matthias and Liu, Huili and Sturm, Anja},
  year={2018},
  journal={Electronic Journal of Probability}
}

@misc{nfw25_2,
      title={Quenched coalescent for diploid population models with selfing and overlapping generations}, 
      author={Louis Wai-Tong Fan and Maximillian Newman and John Wakeley},
      year={2025},
      eprint={2510.26115},
      archivePrefix={arXiv},
      primaryClass={math.PR},
      url={https://arxiv.org/abs/2510.26115}, 
}

@misc{abfw25,
      title={A conditional coalescent for diploid exchangeable population models given the pedigree}, 
      author={Frederic Alberti and Matthias Birkner and Wai-Tong Louis Fan and John Wakeley},
      year={2025},
      eprint={2505.15481},
      archivePrefix={arXiv},
      primaryClass={math.PR},
      url={https://arxiv.org/abs/2505.15481}, 
}

@article{schweinsberg,
author = {Jason Schweinsberg},
title = {{Coalescents with Simultaneous Multiple Collisions}},
volume = {5},
journal = {Electronic Journal of Probability},
number = {none},
publisher = {Institute of Mathematical Statistics and Bernoulli Society},
pages = {1 -- 50},
keywords = {ancestral processes, Coalescence, exchangeable random partitions, Markov processes, Poisson point processes},
year = {2000},
doi = {10.1214/EJP.v5-68},
URL = {https://doi.org/10.1214/EJP.v5-68}
}

@article{berestycki04,
author = {Julien Berestycki},
title = {{Exchangeable Fragmentation-Coalescence Processes and their Equilibrium Measures}},
volume = {9},
journal = {Electronic Journal of Probability},
number = {none},
publisher = {Institute of Mathematical Statistics and Bernoulli Society},
pages = {770 -- 824},
year = {2004},
doi = {10.1214/EJP.v9-227},
URL = {https://doi.org/10.1214/EJP.v9-227}
}

@article{dfbw24,
    author = {Diamantidis, Dimitrios and Fan, Wai-Tong (Louis) and Birkner, Matthias and Wakeley, John},
    title = "{Bursts of coalescence within population pedigrees whenever big families occur}",
    journal = {Genetics},
    pages = {iyae030},
    year = {2024},
    month = {02},
    issn = {1943-2631},
    doi = {10.1093/genetics/iyae030},
    url = {https://doi.org/10.1093/genetics/iyae030},
    eprint = {https://academic.oup.com/genetics/advance-article-pdf/doi/10.1093/genetics/iyae030/56760131/iyae030.pdf},
}

@article{bbe13,
    author = {Birkner, Matthias and Blath, Jochen and Eldon, Bjarki},
    title = {An Ancestral Recombination Graph for Diploid Populations with Skewed Offspring Distribution},
    journal = {Genetics},
    volume = {193},
    number = {1},
    pages = {255-290},
    year = {2013},
    month = {01},
    issn = {1943-2631},
    doi = {10.1534/genetics.112.144329},
    url = {https://doi.org/10.1534/genetics.112.144329},
    eprint = {https://academic.oup.com/genetics/article-pdf/193/1/255/42188469/genetics0255.pdf},
}

@book{ek09,
  title={Markov processes: characterization and convergence},
  author={Ethier, Stewart N and Kurtz, Thomas G},
  year={2009},
  publisher={John Wiley \& Sons}
}

@article{kingman78,
    author = {Kingman, J. F. C.},
    title = {The Representation of Partition Structures},
    journal = {Journal of the London Mathematical Society},
    volume = {s2-18},
    number = {2},
    pages = {374-380},
    year = {1978},
    month = {10},
    issn = {0024-6107},
    doi = {10.1112/jlms/s2-18.2.374},
    url = {https://doi.org/10.1112/jlms/s2-18.2.374},
    eprint = {https://academic.oup.com/jlms/article-pdf/s2-18/2/374/2788610/s2-18-2-374.pdf},
}

@book{wakeley2016coalescent,
  title={Coalescent Theory: An Introduction},
  author={Wakeley, J.},
  isbn={9780974707754},
  lccn={2008016081},
  url={https://books.google.com/books?id=x30RAgAACAAJ},
  year={2016},
  publisher={Macmillan Learning}
}

@article{Rosenberg2002,
  author    = {Rosenberg, Noah A. and Nordborg, Magnus},
  title     = {Genealogical trees, coalescent theory and the analysis of genetic polymorphisms},
  journal   = {Nature Reviews Genetics},
  year      = {2002},
  volume    = {3},
  number    = {5},
  pages     = {380--390},
  doi       = {10.1038/nrg795},
  url       = {https://doi.org/10.1038/nrg795}
}

@article{bolthausenschnitsman,
  author    = {Erwin Bolthausen and Alain{-}S. Sznitman},
  title     = {On Ruelle's Probability Cascades and an Abstract Cavity Method},
  journal   = {Communications in Mathematical Physics},
  volume    = {197},
  number    = {2},
  pages     = {247--276},
  year      = {1998},
  doi       = {10.1007/s002200050450},
  url       = {https://doi.org/10.1007/s002200050450}
}

@misc{berestyckirecentprogress,
      title={Recent progress in coalescent theory}, 
      author={Nathanael Berestycki},
      year={2009},
      eprint={0909.3985},
      archivePrefix={arXiv},
      primaryClass={math.PR},
      url={https://arxiv.org/abs/0909.3985}, 
}

@article{bertoinlegall04,
  author    = {Jean Bertoin and Jean-Fran{\c{c}}ois Le Gall},
  title     = {Stochastic Flows Associated to Coalescent Processes},
  journal   = {Probability Theory and Related Fields},
  volume    = {126},
  number    = {2},
  pages     = {261--288},
  year      = {2003},
  doi       = {10.1007/s00440-003-0264-4},
  url       = {https://doi.org/10.1007/s00440-003-0264-4}
}

@article{wakeleyetal2012,
author = {Wakeley, John and King, Léandra and Low, Bobbi and Ramachandran, Sohini},
year = {2012},
month = {01},
pages = {1433-45},
title = {Gene Genealogies Within a Fixed Pedigree, and the Robustness of Kingman's Coalescent},
volume = {190},
journal = {Genetics},
doi = {10.1534/genetics.111.135574}
}

@article{WakeleyEtAl2016,
 author = {Wakeley, John and King, L\'{e}andra and Wilton, Peter R},
 title = {Effects of the population pedigree on genetic signatures of historical demographic events},
 journal = {Proceedings of the National Academy of Sciences USA},
 volume = {113},
 number = {29},
 pages = {7994--8001},
 year = {2016},
 doi = {10.1073/pnas.160108011} }

@article{WiltonEtAl2017,
 author = {Wilton, Peter R and Baduel, Pierre and Landon, Matthieu M and Wakeley, John},
 title = {Population structure and coalescence in pedigrees: {C}omparisons to the structured coalescent and a framework for inference},
 journal = {Theoretical Population Biology},
 volume = {115},
 pages = {1--12},
 year = {2017},
 doi = {10.1016/j.tpb.2017.01.004} }

@mastersthesis{tyukin15,
 title={Quenched Limits of Coalescents in Fixed Pedigrees},
 author={Tyukin, Andrey},
 year={2015},
 school={Johannes-Gutenberg-Universit\"{a}t Mainz},
 address={Germany},
 url={https://www.glk.uni-mainz.de/files/2018/08/andrey\_tyukin\_msc.pdf} }

@article{notohara90,
  author  = {Notohara, M.},
  title   = {The coalescent and the genealogical process in geographically structured population},
  journal = {Journal of Mathematical Biology},
  year    = {1990},
  volume  = {29},
  pages   = {59--75},
  doi     = {10.1007/BF00173909}
}

@misc{daipra25,
      title={Multi-type $\Xi$-coalescents from structured population models with bottlenecks}, 
      author={Marta Dai Pra and Alison Etheridge and Jere Koskela and Maite Wilke-Berenguer},
      year={2025},
      eprint={2504.11875},
      archivePrefix={arXiv},
      primaryClass={math.PR},
      url={https://arxiv.org/abs/2504.11875}, 
}

@article{herbots97,
  author  = {Wilkinson-Herbots, Hilde},
  title   = {Genealogy and subpopulation differentiation under various models of population structure},
  journal = {Journal of Mathematical Biology},
  year    = {1998},
  volume  = {37},
  number  = {6},
  pages   = {535--585},
  doi     = {10.1007/s002850050140}
}

@article{guo22,
  author  = {Guo, F. and Carbone, I. and Rasmussen, D. A.},
  title   = {Recombination-aware phylogeographic inference using the structured coalescent with ancestral recombination},
  journal = {PLOS Computational Biology},
  year    = {2022},
  volume  = {18},
  number  = {8},
  pages   = {e1010422},
  doi     = {10.1371/journal.pcbi.1010422}
}

@article{muller17,
  author  = {M{\"u}ller, Nicola F. and Rasmussen, David A. and Stadler, Tanja},
  title   = {The Structured Coalescent and Its Approximations},
  journal = {Molecular Biology and Evolution},
  year    = {2017},
  volume  = {34},
  number  = {11},
  pages   = {2970--2981},
  month   = nov,
  doi     = {10.1093/molbev/msx186},
  pmid    = {28666382},
  pmcid   = {PMC5850743}
}

@article{birkner13,
  title={Directed random walk on the backbone of an oriented percolation cluster},
  author={Matthias C. F. Birkner and Jiri Cerny and Andrej Depperschmidt and Nina Gantert},
  journal={Electronic Journal of Probability},
  year={2012},
  volume={18},
  pages={1-35},
  url={https://api.semanticscholar.org/CorpusID:18334595}
}

@article{bs02,
author = {Erwin Bolthausen and Alain-Sol Sznitman},
title = {{On the Satic and Dynamic Points of View for Certain Random Walks in Random Environment}},
volume = {9},
journal = {Methods and Applications of Analysis},
number = {3},
publisher = {International Press of Boston},
pages = {345 -- 376},
year = {2002},
}

@article{mohle24,
  title={On multi-type {C}annings models and multi-type exchangeable coalescents},
  author={M{\"o}hle, Martin},
  journal={Theoretical Population Biology},
  volume={156},
  pages={103--116},
  year={2024},
  publisher={Elsevier},
  doi={10.1016/j.tpb.2024.02.005},
  issn={0040-5809}
}

@article{notohara16,
author = {Ryouta Kozakai and Akinobu Shimizu and Morihiro Notohara},
title = {{Convergence to the structured coalescent process}},
volume = {53},
journal = {Journal of Applied Probability},
number = {2},
publisher = {Applied Probability Trust},
pages = {502 -- 517},
keywords = {Cannings' reproduction, nonconservative migration, structured coalescent, weak convergence},
year = {2016},
}

@article{johnston23,
author = {Samuel G. G. Johnston and Andreas Kyprianou and Tim Rogers},
title = {{Multitype $\Lambda$-coalescents}},
volume = {33},
journal = {The Annals of Applied Probability},
number = {6A},
publisher = {Institute of Mathematical Statistics},
pages = {4210 -- 4237},
keywords = {coming down from infinity, consistency, exchangeability, Λ-coalescent},
year = {2023},
doi = {10.1214/22-AAP1891},
URL = {https://doi.org/10.1214/22-AAP1891}
}

@article{eldon09,
title = {Structured coalescent processes from a modified Moran model with large offspring numbers},
journal = {Theoretical Population Biology},
volume = {76},
number = {2},
pages = {92-104},
year = {2009},
issn = {0040-5809},
doi = {https://doi.org/10.1016/j.tpb.2009.05.001},
url = {https://www.sciencedirect.com/science/article/pii/S0040580909000665},
author = {Bjarki Eldon},
keywords = {Structured coalescent, Modified Moran model, Multiple mergers}
}

@book{kallenberg17,
  title     = {Random Measures, Theory and Applications},
  author    = {Olav Kallenberg},
  series    = {Probability Theory and Stochastic Modelling},
  volume    = {77},
  year      = {2017},
  publisher = {Springer, Cham},
  doi       = {10.1007/978-3-319-41598-7},
  isbn      = {978-3-319-41596-3},
  edition   = {1},
  pages     = {XXVIII, 680},
  note      = {eBook ISBN: 978-3-319-41598-7, Softcover ISBN: 978-3-319-82392-8},
  url       = {https://doi.org/10.1007/978-3-319-41598-7}
}

\appendix

\section{The weak convergence criterion for the total offspring numbers and migration probabilities}\label{App: weak_convergence}
    Here we describe conditions under which the convergence of Assumption~\ref{A: rarity} holds.

    \begin{lemma}[Moment criteria for Assumption~\ref{A: rarity}]\label{L: rarity_moment_criteria}
        Suppose that, as $N$ goes to infinity, Assumptions~\ref{A: continuous} and \ref{A: comparable} hold.
        For $z\in\Z_+$ and $k\in\N$ write $(z)_k:=z(z-1)\cdots(z-k+1)$ for the falling factorial. For each $v\in V$ and each $N$, define the normalisation ratio
        \begin{equation}\label{E: rho_def}
            \rho_N(v)
            :=
            \frac{N(v)}{N^*_N(v)}.
        \end{equation}
        Then the following are equivalent:
        \begin{enumerate}
        \item[\textup{(i)}] $\frac{1}{c_N^{v_0}} \Phi_N\to \Phi$ vaguely on $\Delta^V\times[0,1]^E\setminus\{\mathbf 0_{V,E}\}$
        for some $\sigma$-finite measure $\Phi$ (i.e.\ Assumption~\ref{A: rarity} holds).
        
        \item[\textup{(ii)}] For every choice of
        \begin{itemize}
            \item a multi-index $j = (j_v)_{v \in V}\in\Z_+^V$ for each $v\in V$ (not all zero),
            \item integers $k_{v,1},\ldots,k_{v,j_v}\ge 2$ for each $v\in V$,
            \item and a multi-index $r=(r_e)_{e\in E}\in\Z_+^E$,
        \end{itemize}
        the limit
        \begin{equation}\label{E: mixed_moment_limit}
        \phi\!\lp{j,k;r}
        :=
        \lim_{N\to\infty}
        \frac{1}{c_N^{v_0}}
        \,
        \mathbb{E}\!\lb{
            \,
            \prod_{e\in E} m_e^{r_e}
            \prod_{v\in V}
            \lcb{
                \rho_N(v)^{K_v}
                \,
                \frac{\prod_{a=1}^{j_v} \lp{\mathcal V_a^v}_{k_{v,a}}}
                     {N(v)^{K_v-j_v}\,2^{K_v}}
            }
        }
        \end{equation}
        exists in $[0,\infty)$, where $K_v:=\sum_{a=1}^{j_v}k_{v,a}$ and, by convention, empty products equal $1$.
        \end{enumerate}
        
        Moreover, when these equivalent conditions hold, the limits in \eqref{E: mixed_moment_limit} are given by
        \begin{equation}\label{E: mixed_moment_representation}
        \phi\!\lp{j,k;r}
        =
        \int_{\Delta^V\times[0,1]^E}
        \prod_{e\in E} m_e^{r_e}
        \prod_{v\in V}
        \lp{
            \sum_{\substack{i_1,\ldots,i_{j_v}\in\N\\ \text{all distinct}}}
            (x_{i_1}^v)^{k_{v,1}}\cdots(x_{i_{j_v}}^v)^{k_{v,j_v}}
        }
        \,d\Phi\lp{x,m}.
        \end{equation}
    \end{lemma}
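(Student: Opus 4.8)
The plan is to prove the equivalence by exhibiting a convergence-determining class of test functions against which both (i) and (ii) reduce to the \emph{same} statement. Since $S:=\Delta^V\times[0,1]^E$ is compact, vague convergence of $\tfrac{1}{c_N^{v_0}}\Phi_N$ on $S\setminus\{\mathbf{0}_{V,E}\}$ is governed by integration against continuous functions on $S$ that vanish at $\mathbf{0}_{V,E}$. I take as generators the migration monomials $m\mapsto\prod_e m_e^{r_e}$ and the offspring ``distinct power sums''
\[
P_{j_v,k_v}(x^v):=\sum_{\substack{i_1,\dots,i_{j_v}\in\N\\ \text{all distinct}}}(x^v_{i_1})^{k_{v,1}}\cdots(x^v_{i_{j_v}})^{k_{v,j_v}},\qquad k_{v,a}\ge 2,
\]
together with their products $g_{j,k,r}:=\prod_e m_e^{r_e}\prod_v P_{j_v,k_v}(x^v)$. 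By Stone--Weierstrass the unital algebra they generate is dense in $C(S)$ and separates points, and every nonconstant generator vanishes at $\mathbf{0}_{V,E}$. Because each factor with $k_{v,a}\ge 2$ satisfies $P_{j_v,k_v}(x^v)\le\langle x^v,x^v\rangle$ on $\Delta$ and each migration factor is bounded, the whole class is dominated near $\mathbf{0}_{V,E}$ by $C\lp{\sup_v\langle x^v,x^v\rangle+\|m\|_\infty}$.

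Next I establish the dictionary between $g_{j,k,r}$ and the descending-factorial moments in (ii). Writing $\widetilde{\mathcal V}^v_a=\mathcal V^v_a/(2N^*(v))$ and using that power sums are symmetric, within-deme exchangeability turns the sum over distinct $j_v$-tuples into $(N(v))_{j_v}$ copies of the expectation over the first $j_v$ individuals, while the algebraic identity
\[
\frac{\rho_N(v)^{K_v}}{N(v)^{K_v-j_v}\,2^{K_v}}=\frac{N(v)^{j_v}}{(2N^*(v))^{K_v}}
\]
shows that the bracketed normalization in \eqref{E: mixed_moment_limit} is exactly what converts $(N(v))_{j_v}\,\mathbb E[\prod_b(\mathcal V^v_b)^{k_{v,b}}]/(2N^*(v))^{K_v}$ into $\int P_{j_v,k_v}(x^v)\,d(\tfrac{1}{c_N^{v_0}}\Phi_N)$, up to the factor $(N(v))_{j_v}/N(v)^{j_v}\to 1$.

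The crux is to show that the descending-factorial moments converge to $\int g_{j,k,r}\,d\Phi$ rather than to the (generally different) limit of the plain power-sum integrals, which overshoot by the microscopic ``Kingman'' mass escaping to $\mathbf{0}_{V,E}$. I would split each offspring coordinate into a macroscopic part (families with frequency bounded below) and a microscopic part. On the macroscopic part $(\mathcal V^v_a)_{k}\sim(\mathcal V^v_a)^{k}$ since the family sizes diverge, and the contribution converges by vague convergence, these regions being bounded away from $\mathbf{0}_{V,E}$. On the microscopic part, the descending-factorial structure together with the exponent restriction $k_{v,a}\ge 2$ forces the contribution to be $o(c_N^{v_0})$: this is the mechanism that subtracts the microscopic diagonal (for $k=2$ it is the deterministic identity $\sum_a(\mathcal V^v_a)_2=\sum_a(\mathcal V^v_a)^2-2N^*(v)$), leaving only the macroscopic measure $\Phi$ on the punctured space. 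This is the structural analogue of \cite[Proposition 2.1]{birkner2018} applied coordinatewise in each deme, and the migration monomials are handled the same way. The tightness needed to pass the cutoff at $\mathbf{0}_{V,E}$ comes in the offspring directions from Assumption~\ref{A: comparable}, which makes $\tfrac{1}{c_N^{v_0}}\mathbb E[\langle\widetilde{\mathcal V}^v,\widetilde{\mathcal V}^v\rangle]$ bounded in $N$, and in the migration directions from boundedness of the first moments $\tfrac{1}{c_N^{v_0}}\mathbb E[m_e]$, which are themselves the $j=\mathbf 0$ instances of (ii). With this in hand, (i) and (ii) each become equivalent to convergence of $\int g\,d(\tfrac{1}{c_N^{v_0}}\Phi_N)\to\int g\,d\Phi$ for all $g$ in the determining class, and the representation \eqref{E: mixed_moment_representation} is read off directly.

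The main obstacle I anticipate is precisely this microscopic/macroscopic separation at $\mathbf{0}_{V,E}$: quantifying that small families and small migrations contribute negligibly to the descending-factorial moments on the coalescent timescale, uniformly in $N$, and reconciling the order-one vanishing of the migration monomials with the order-two vanishing of the offspring statistics. This is where the choice of descending factorials rather than ordinary powers, and the exponent restriction $k_{v,a}\ge 2$, are indispensable, and where Assumptions~\ref{A: continuous} and \ref{A: comparable} enter to guarantee $1/N=O(c_N^{v_0})$ so that the leftover polynomial corrections genuinely vanish after rescaling.
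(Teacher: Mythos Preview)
Your overall architecture---Stone--Weierstrass on the algebra generated by migration monomials and distinct power sums in the offspring frequencies, together with the algebraic dictionary relating these to the falling-factorial moments via within-deme exchangeability and the identity $\rho_N(v)^{K_v}/(N(v)^{K_v-j_v}2^{K_v})=N(v)^{j_v}/(2N^*(v))^{K_v}$---matches the paper's proof exactly. Where you part ways is at what you call the ``crux'': you propose to prove directly that the falling-factorial moments in (ii) converge to $\int g_{j,k,r}\,d\Phi$ by a microscopic/macroscopic split of the offspring random variables, asserting that on the microscopic part the falling-factorial structure (together with $k_{v,a}\ge 2$) forces the contribution to be $o(c_N^{v_0})$.

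This step does not work as stated. Consider the single-deme Wright--Fisher model, where $\Phi=0$ on the punctured space: the $(j_v,k_{v,1})=(1,2)$ falling-factorial moment in (ii) satisfies
\[
\frac{1}{c_N}\,\mathbb{E}\!\lb{\frac{(\mathcal{V}_1)_2}{4N}}\longrightarrow 2,
\]
not $0=\int\langle x,x\rangle\,d\Phi$; the entire contribution here is microscopic and is \emph{exactly} of order $c_N$, not $o(c_N)$. The identity $\sum_a(\mathcal{V}_a)_2=\sum_a(\mathcal{V}_a)^2-2N^*$ you cite removes only the linear-in-$\mathcal{V}_a$ piece, not the Kingman-scale quadratic mass concentrating at the origin. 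So the mechanism you describe does not subtract what you need it to subtract.

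The paper sidesteps this by never attempting to identify the limits of the \emph{full-space} integrals $\tfrac{1}{c_N}\int g\,d\Phi_N$ directly. Instead it restricts from the outset to compacta $K_\varepsilon:=B(\varepsilon)^c$ bounded away from $\mathbf{0}_{V,E}$, uses the second-order moments from (ii) only to obtain the uniform mass bound $\sup_N\tfrac{1}{c_N}\Phi_N(K_\varepsilon)<\infty$ (via $\mathds{1}_{K_\varepsilon}\le H/h_\varepsilon$ with $H(x,m)=\sum_e m_e^2+\sum_v\langle x^v,x^v\rangle$), and then applies Stone--Weierstrass density on $C(K_\varepsilon)$ together with Riesz representation to construct $\Phi$ on each $K_\varepsilon$ and patch them consistently. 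On $K_\varepsilon$ the lower-order corrections in the powers-to-falling-factorials change of basis are $O(1/N)$ pointwise, hence contribute at most $O(1/N)\cdot\tfrac{1}{c_N}\Phi_N(K_\varepsilon)=o(1)$---so no splitting of the random variables is needed there, and the problematic microscopic mass simply never enters the argument for the equivalence (i)$\Leftrightarrow$(ii).
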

    \begin{remark}
        Note that if we consider the single deme case here, that Equation~\eqref{E: mixed_moment_limit} corresponds exactly with \cite[Equation 1.6]{birkner2018}, which is equivalent to
        \begin{equation*}
            \frac{1}{2c_N^{v_0}}d\Phi_N(x) \to \frac{1}{\langle x,x\rangle} d\Xi(x)
        \end{equation*}
        vaguely on $\Delta \setminus \{\mathbf{0}\}$ for $\Xi$ a probability measure on $\Delta$.
    \end{remark}

    \begin{proof}
        Fix $\varepsilon>0$ and define
        \begin{equation*}
            K_\varepsilon
            :=
            \Delta^V\times[0,1]^E\setminus B\lp{\varepsilon},
        \end{equation*}
        where $B\lp{\varepsilon}$ denotes the open ball about $\mathbf 0_{V,E}$ in the product topology. Then $K_\varepsilon$ is compact in
        $\Delta^V\times[0,1]^E$ and is bounded away from $\mathbf 0_{V,E}$.
        
        We begin by reducing vague convergence on the punctured space to weak convergence on $K_\varepsilon$.
        A function $g\in C_c\lp{\Delta^V\times[0,1]^E\setminus\{\mathbf 0_{V,E}\}}$ has compact support bounded away from $\mathbf 0_{V,E}$,
        hence there exists $\varepsilon>0$ such that $\mathrm{supp}(g)\subseteq K_\varepsilon$.
        Thus, to prove vague convergence of $\frac{1}{c_N^{v_0}}\Phi_N$ on the punctured space, it suffices to show that for every
        $\varepsilon>0$ the finite measures $\frac{1}{c_N^{v_0}}\Phi_N\!\restriction_{K_\varepsilon}$ converge weakly on the compact metric
        space $K_\varepsilon$.
        
        Our goal is to introduce an algebra of polynomial test functions on $K_\varepsilon$ and show it is uniformly dense in $C\lp{K_\varepsilon}$.
        For multi-indices $r=(r_e)_{e\in E}\in\Z_+^E$ and for each $v\in V$ integers $j_v\in\Z_+$ and $k_{v,1},\ldots,k_{v,j_v}\ge 2$,
        define the continuous function
        \begin{equation}\label{E: rarity_test_functions_rewrite}
            h_{r,\{j_v,k_{v,\cdot}\}}(x,m)
            :=
            \prod_{e\in E} m_e^{r_e}
            \prod_{v\in V}
            \lp{
                \sum_{\substack{i_1,\ldots,i_{j_v}\in\N\\ \text{all distinct}}}
                (x_{i_1}^v)^{k_{v,1}}\cdots(x_{i_{j_v}}^v)^{k_{v,j_v}}
            },
        \end{equation}
        with the convention that the inner sum equals $1$ when $j_v=0$.
        Let $\mathscr A$ be the algebra of finite linear combinations of such functions, and also allow the purely-migration case
        $j_v\equiv 0$ provided $r\neq 0$.
        Each $h_{r,\{j_v,k_{v,\cdot}\}}$ is continuous on $\Delta^V\times[0,1]^E$, hence its restriction to $K_\varepsilon$ lies in
        $C\lp{K_\varepsilon}$.
        
        Polynomials in the coordinates $(m_e)_{e\in E}$ separate points of $[0,1]^E$.
        For each fixed $v$, the symmetric polynomials in the coordinates of $x^v$ generated by the power sums
        $\sum_i (x_i^v)^q$ with $q\ge 2$ separate points of $\Delta$.
        Therefore $\mathscr A\cup\{1\}$ separates points of $\Delta^V\times[0,1]^E$, hence also of $K_\varepsilon$, and contains the
        constants. By Stone--Weierstrass, $\mathscr A$ is uniformly dense in $C\lp{K_\varepsilon}$.

        We now show that \textup{(ii)} implies \textup{(i)}. To this end, we first bound $\frac{1}{c_N^{v_0}}\Phi_N\lp{K_\varepsilon}$ uniformly in $N$ using the second-order moments given by \textup{(ii)}.
        Define the continuous nonnegative function
        \begin{equation}\label{E: rarity_H_def}
            H(x,m)
            :=
            \sum_{e\in E} m_e^2
            +
            \sum_{v\in V}\sum_{i\in\N} \langle x^v,x^v \rangle.
        \end{equation}
        Then $H\lp{\mathbf 0_{V,E}}=0$ and $H(x,m)>0$ for all $(x,m)\neq\mathbf 0_{V,E}$.
        Since $K_\varepsilon$ is compact and does not contain $\mathbf 0_{V,E}$, we have
        \begin{equation*}
            h_\varepsilon
            :=
            \inf_{(x,m)\in K_\varepsilon} H(x,m)
            \;>\;0,
        \end{equation*}
        and hence the pointwise bound on $K_\varepsilon$,
        \begin{equation*}
            \mathds{1}_{K_\varepsilon}(x,m)
            \le
            \frac{1}{h_\varepsilon}\,H(x,m).
        \end{equation*}
        Integrating against $\frac{1}{c_N^{v_0}}\Phi_N$ yields
        \begin{equation}\label{E: local_mass_bound_rewrite}
            \frac{1}{c_N^{v_0}}\Phi_N\lp{K_\varepsilon}
            \le
            \frac{1}{h_\varepsilon}\,
            \frac{1}{c_N^{v_0}}\int H\,d\Phi_N.
        \end{equation}
        The quantity $\int H\,d\Phi_N$ is a finite linear combination of the mixed moments in \eqref{E: mixed_moment_limit}:
        the terms $\int \sum_{v}\sum_i \langle x^v,x^v\rangle\,d\Phi_N$ correspond to the choice $j_v=1$, $k_{v,1}=2$, $r\equiv 0$,
        while the terms $\int m_e^2\,d\Phi_N$ correspond to the purely-migration case $j_v\equiv 0$ and $r_e=2$ for a single edge $e$.
        Thus condition \textup{(ii)} implies that the right-hand side of \eqref{E: local_mass_bound_rewrite} is bounded uniformly in $N$,
        and hence
        \begin{equation*}
            \sup_{N\in\N}\frac{1}{c_N^{v_0}}\Phi_N\lp{K_\varepsilon}
            <\infty
            \qquad\text{for every fixed }\varepsilon>0.
        \end{equation*}
        
        We now show \textup{(ii)} implies \textup{(i)}.
        Fix $\varepsilon>0$. For each $f\in C\lp{K_\varepsilon}$ we show that the limit
        \begin{equation*}
            \lim_{N\to\infty}\frac{1}{c_N^{v_0}}\int_{K_\varepsilon} f\,d\Phi_N
        \end{equation*}
        exists.
        By uniform density of $\mathscr A$ on $K_\varepsilon$, choose $h\in\mathscr A$ such that
        $\sup_{K_\varepsilon}\la{f-h}\le\delta$.
        Then, using the uniform bound on $\frac{1}{c_N^{v_0}}\Phi_N\lp{K_\varepsilon}$ established above,
        \begin{align*}
            \la{
            \frac{1}{c_N^{v_0}}\int_{K_\varepsilon} f\,d\Phi_N
            -
            \frac{1}{c_N^{v_0}}\int_{K_\varepsilon} h\,d\Phi_N
            }
            &\le
            \delta\,\frac{1}{c_N^{v_0}}\Phi_N\lp{K_\varepsilon}.
        \end{align*}
        On the other hand, for $h\in\mathscr A$ the limit of $\frac{1}{c_N^{v_0}}\int h\,d\Phi_N$ exists by assumption \textup{(ii)} and
        linearity. Letting $N\to\infty$ and then $\delta\downarrow 0$ shows that the limit exists for every $f\in C\lp{K_\varepsilon}$.
        
        The map
        \begin{equation*}
            f\longmapsto \lim_{N\to\infty}\frac{1}{c_N^{v_0}}\int_{K_\varepsilon} f\,d\Phi_N
        \end{equation*}
        is therefore a positive linear functional on $C\lp{K_\varepsilon}$.
        By the Riesz representation theorem on the compact metric space $K_\varepsilon$, there exists a unique finite Borel measure
        $\Phi^{(\varepsilon)}$ on $K_\varepsilon$ such that
        \begin{equation*}
            \lim_{N\to\infty}\frac{1}{c_N^{v_0}}\int_{K_\varepsilon} f\,d\Phi_N
            =
            \int_{K_\varepsilon} f\,d\Phi^{(\varepsilon)}
            \qquad\text{for all }f\in C\lp{K_\varepsilon}.
        \end{equation*}
        If $0<\varepsilon'<\varepsilon$, then $K_\varepsilon\subseteq K_{\varepsilon'}$ and uniqueness in Riesz implies
        $\Phi^{(\varepsilon')}\!\restriction_{K_\varepsilon}=\Phi^{(\varepsilon)}$.
        Hence these measures are consistent and define a unique $\sigma$-finite Borel measure $\Phi$ on
        $\Delta^V\times[0,1]^E\setminus\{\mathbf 0_{V,E}\}$ by setting $\Phi\!\restriction_{K_\varepsilon}:=\Phi^{(\varepsilon)}$.
        By construction,
        \begin{equation*}
            \frac{1}{c_N^{v_0}}\Phi_N \to \Phi
        \end{equation*}
        vaguely on $\Delta^V\times[0,1]^E\setminus\{\mathbf 0_{V,E}\}$, proving \textup{(i)}.
        
        Our goal is to prove \textup{(i)} implies \textup{(ii)} and to identify the limits by rewriting the test integrals in terms of
        mixed factorial moments.
        Assume \textup{(i)}. Fix indices $(r,j,k)$ as in \textup{(ii)} and define the corresponding function
        $h=h_{r,j,k}$ by \eqref{E: rarity_test_functions_rewrite}.
        Fix $\varepsilon>0$. Then $h\,\mathds{1}_{K_\varepsilon}$ has compact support in the punctured space, hence vague convergence yields
        \begin{equation*}
            \lim_{N\to\infty}\frac{1}{c_N^{v_0}}\int_{K_\varepsilon} h\,d\Phi_N
            =
            \int_{K_\varepsilon} h\,d\Phi.
        \end{equation*}
        Letting $\varepsilon\downarrow 0$ and using monotone convergence on the increasing compact exhaustion
        \begin{equation*}
            K_\varepsilon\uparrow \Delta^V\times[0,1]^E\setminus\{\mathbf 0_{V,E}\}
        \end{equation*}
        gives
        \begin{equation*}
            \lim_{N\to\infty}\frac{1}{c_N^{v_0}}\int h\,d\Phi_N
            =
            \int h\,d\Phi,
        \end{equation*}
        which is exactly \eqref{E: mixed_moment_representation}.
        
        Our goal is to rewrite $\frac{1}{c_N^{v_0}}\int h\,d\Phi_N$ in terms of the mixed factorial expressions in \eqref{E: mixed_moment_limit}.
        Fix $v\in V$. Since $h$ is symmetric in the coordinates of each $x^v$, we may replace the ranked vector $\widetilde{\mathcal V}_N^v$ by
        the unranked frequency vector
        \begin{equation*}
            \lp{\frac{\mathcal{V}_1^v}{2N^*(v)},\ldots,\frac{\mathcal{V}_{N(v)}^v}{2N^*(v)},0,0,\ldots}.
        \end{equation*}
        Conditional on $m$, the vector $\lp{\mathcal V_i^v}_{i\in[N(v)]}$ is exchangeable, hence
        \begin{align*}
            &\mathbb{E}\!\lb{
                \sum_{\substack{i_1,\ldots,i_{j_v}\in[N(v)]\\ \text{distinct}}}
                \lp{\frac{\mathcal V_{i_1}^v}{2N^*(v)}}^{k_{v,1}}
                \cdots
                \lp{\frac{\mathcal V_{i_{j_v}}^v}{2N^*(v)}}^{k_{v,j_v}}
                \,\mid\, m
            }\\
            &\qquad=
            (N(v))_{j_v}\,
            \mathbb{E}\!\lb{
                \prod_{a=1}^{j_v}
                \lp{\frac{\mathcal V_a^v}{2N^*(v)}}^{k_{v,a}}
                \,\mid\, m
            }.
        \end{align*}
        For each integer $k\ge 2$ the polynomial $z^k$ is an integer linear combination of falling factorials $(z)_\ell$ with
        $0\le \ell\le k$ and leading term $(z)_k$.
        Applying this change of basis to each factor $\lp{\mathcal V_a^v}^{k_{v,a}}$, expanding the product, and using
        \begin{equation*}
            \frac{1}{\lp{2N^*(v)}^{\ell}}
            =
            \rho_N(v)^{\ell}\,\frac{1}{2^{\ell}N(v)^{\ell}}
        \end{equation*}
        rewrites $\frac{1}{c_N^{v_0}}\int h\,d\Phi_N$ as a finite linear combination of expressions of the form \eqref{E: mixed_moment_limit}.
        Therefore the existence of the limits for the functions $h$ is equivalent to the existence of the limits in \eqref{E: mixed_moment_limit},
        and the limiting values agree with $\int h\,d\Phi$, i.e.\ with \eqref{E: mixed_moment_representation}.
        This proves \textup{(i)} implies \textup{(ii)} and completes the proof.
    \end{proof}

    \begin{corollary}[A product-form sufficient criterion for Assumption~\ref{A: rarity}]\label{C: rarity_product_form}
        Suppose that, as $N$ goes to infinity, Assumptions~\ref{A: continuous} and \ref{A: comparable} hold.
        Suppose there exist Radon measures $\Phi^V_v$ on $\Delta\setminus\{\mathbf 0\}$ for $v\in V$ and Radon measures $\Phi^E_e$ on $[0,1]$
        for $e\in E$ such that for every choice of $j, k, r$ as in Lemma~\ref{L: rarity_moment_criteria} that
        \begin{align}\label{E: factorized_moment_assumption}
        \lim_{N\to\infty}&
        \frac{1}{c_N^{v_0}}
        \,
        \mathbb{E}\!\lb{
            \,
            \prod_{e\in E} m_e^{r_e}
            \prod_{v\in V}
            \lcb{
                \rho_N(v)^{K_v}
                \,
                \frac{\prod_{a=1}^{j_v} \lp{\mathcal V_a^v}_{k_{v,a}}}
                     {N(v)^{K_v-j_v}\,2^{K_v}}
            }
        }\\
        &=
        \prod_{e\in E}\int_{[0,1]} m_e^{r_e}\,d\Phi^E_e(m_e)
        \prod_{v\in V}\int_{\Delta}
        \lp{
            \sum_{\substack{i_1,\ldots,i_{j_v}\in\N\\ \text{all distinct}}}
            (x_{i_1})^{k_{v,1}}\cdots(x_{i_{j_v}})^{k_{v,j_v}}
        }\,d\Phi^V_v(x).
        \end{align}
        Then Assumption~\ref{A: rarity} holds with limit measure
        \begin{equation*}
            \Phi
            =
            \lp{\bigotimes_{v\in V}\Phi^V_v}\otimes\lp{\bigotimes_{e\in E}\Phi^E_e}
            \qquad\text{on }\Delta^V\times[0,1]^E\setminus\{\mathbf 0_{V,E}\}.
        \end{equation*}
        Moreover, the same conclusion holds if the right-hand side of \eqref{E: factorized_moment_assumption} is replaced by a finite sum
        of such products since Lemma~\ref{L: rarity_moment_criteria} is linear in $\Phi$.
    \end{corollary}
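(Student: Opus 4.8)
The plan is to derive Corollary~\ref{C: rarity_product_form} directly from Lemma~\ref{L: rarity_moment_criteria}. First I would observe that hypothesis \eqref{E: factorized_moment_assumption} is precisely the assertion that every mixed factorial-moment limit \eqref{E: mixed_moment_limit} exists and is finite, its value being the displayed product of one-dimensional integrals (finite because each factor $\int m_e^{r_e}\,d\Phi^E_e$ and $\int\lp{\sum_{\text{distinct}}\cdots}\,d\Phi^V_v$ is assumed finite). Thus condition \textup{(ii)} of Lemma~\ref{L: rarity_moment_criteria} holds, and the lemma yields Assumption~\ref{A: rarity}: there is a $\sigma$-finite measure $\Phi$ on $\Delta^V\times[0,1]^E\setminus\{\mathbf 0_{V,E}\}$ with $\tfrac{1}{c_N^{v_0}}\Phi_N\to\Phi$ vaguely, and by \eqref{E: mixed_moment_representation} its integral against each test function $h_{r,\{j_v,k_{v,\cdot}\}}$ of \eqref{E: rarity_test_functions_rewrite} equals $\phi\lp{j,k;r}$.

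It then remains to identify this $\Phi$ with the product measure $\widetilde\Phi:=\lp{\bigotimes_{v}\Phi^V_v}\otimes\lp{\bigotimes_e\Phi^E_e}$, and the idea is to check that $\widetilde\Phi$ reproduces the same moments. Since each $h_{r,\{j_v,k_{v,\cdot}\}}$ factorizes as a tensor product of nonnegative functions, one per coordinate of $\Delta^V\times[0,1]^E$, and since the factor measures are $\sigma$-finite, Tonelli's theorem gives $\int h\,d\widetilde\Phi=\prod_e\int m_e^{r_e}\,d\Phi^E_e\cdot\prod_v\int\lp{\sum_{\text{distinct}}\cdots}\,d\Phi^V_v$, which is exactly $\phi\lp{j,k;r}=\int h\,d\Phi$. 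Hence $\Phi$ and $\widetilde\Phi$ agree on the whole algebra $\mathscr A$ of test functions. I would then reuse the machinery built inside the proof of Lemma~\ref{L: rarity_moment_criteria}: on each compact shell $K_\varepsilon$ the algebra $\mathscr A$ is uniformly dense in $C\lp{K_\varepsilon}$ by Stone--Weierstrass, so by uniqueness in the Riesz representation the two finite measures $\Phi\!\restriction_{K_\varepsilon}$ and $\widetilde\Phi\!\restriction_{K_\varepsilon}$ coincide; letting $\varepsilon\downarrow 0$ along the compact exhaustion $K_\varepsilon\uparrow\Delta^V\times[0,1]^E\setminus\{\mathbf 0_{V,E}\}$ gives $\Phi=\widetilde\Phi$. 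The final sentence of the corollary is then immediate, since $\Phi\mapsto\int h\,d\Phi$ and the functionals $\phi\lp{j,k;r}$ are linear: if the right-hand side of \eqref{E: factorized_moment_assumption} is a finite sum of products, each summand produces its own measure by the argument above and the vague limit is their sum.

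The delicate point, and the main obstacle, is to verify that the candidate $\widetilde\Phi$ is a legitimate $\sigma$-finite Borel measure that is finite on each compact shell $K_\varepsilon$, since the Riesz uniqueness step requires finiteness there. This is not automatic: a product of several factor measures, each carrying infinite mass near the origin of its own coordinate, need not be locally finite on the punctured product space near configurations in which only some coordinates degenerate. In practice one circumvents this through the finite-sum form of the hypothesis, where within each summand all but one factor is a point mass at the origin, so that the ``product'' collapses to the pushforward of a single factor measure onto the corresponding coordinate slice and is manifestly Radon; the full limit is then the sum of these slice measures, matching the support structure of the examples of Section~\ref{S: examples_and_apps}, in which at most one coordinate is macroscopic per exceptional generation. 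Granting local finiteness on the shells, the remainder of the proof is a direct transcription of the Stone--Weierstrass density and Riesz representation steps already carried out in the proof of Lemma~\ref{L: rarity_moment_criteria}.
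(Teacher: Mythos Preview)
The paper does not give an explicit proof of this corollary; it is stated as an immediate consequence of Lemma~\ref{L: rarity_moment_criteria}, with the linearity remark already folded into the statement. Your proposal is precisely the natural elaboration: verify that the hypothesis is condition \textup{(ii)} of the lemma, invoke the lemma to obtain a limit measure $\Phi$, and then identify $\Phi$ with the product by matching integrals against the algebra $\mathscr A$ and reusing the Stone--Weierstrass and Riesz steps from the lemma's proof. This is the paper's implicit argument, written out.

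Your concern about local finiteness of $\widetilde\Phi=\bigotimes_v\Phi^V_v\otimes\bigotimes_e\Phi^E_e$ on the shells $K_\varepsilon$ is well taken, and it is a lacuna in the corollary as \emph{stated}, not in your proof strategy. If two or more of the $\Phi^V_v$ carry infinite total mass, then $K_\varepsilon$ contains slabs on which some $x^{v_1}$ is bounded away from $\mathbf 0$ while the remaining coordinates range over all of $\Delta$, and $\widetilde\Phi$ is infinite there; the Riesz uniqueness step then fails. Your diagnosis of how this is avoided in practice is correct and matches the usage in Section~\ref{S: examples_and_apps}: in every application the limit is a finite sum in which each summand has at most one nondegenerate factor, so each summand is the pushforward of a single Radon measure onto a coordinate slice and is manifestly finite on each $K_\varepsilon$. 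Under that reading, your proof goes through verbatim; the literal tensor-product reading would require an additional finiteness hypothesis on the factor measures that the paper does not state.
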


\section{Annealed convergence proof}\label{App: annealed_convergence}

    We can proceed now with the proof of Theorem~\ref{T: annealed}.
    \begin{proof}
        Lemma~A.1 of \cite{nfw25_2}, Theorem~\ref{T: quenched} states that, for $\mu, \mu_N$ of random probability measures on $\mathcal{D}\lp{\R_+, E}$ for $E$ a locally compact Polish space, $\mu_N \toL \mu$ implies that the intensity measures $\mathbb{E}\lb{\mu_N}$ converges weakly to $\mathbb{E}\lb{\mu}$. This implies that the sequence of intensity measures
        \begin{equation*}
            \mathbb{E}\lb{\PP\lp{\overline{\chi}^{N,n} \in \cdot \,\mid\, \mathcal{A}_N}}
            = \PP\lp{\overline{\chi}^{N,n} \in \cdot}
        \end{equation*}
        converges weakly to
        \begin{equation*}
            \mathbb{E}\lb{\PP\lp{\chi^n \in \cdot \,\mid\, \Psi}} = \PP\lp{\chi^n \in \cdot}.
        \end{equation*}
        Consequently, it suffices to show that $\PP\lp{\chi^n \in \cdot}$ is the law of a $\lp{(h_v)_* \Phi, \kappa, \mu}$-$n$-coalescent. The coalescence between atom times of $\Psi$ gives precisely $K_n(\kappa) + M_n(\mu)$ part of the generator. Annealing over $\Psi$ then gives $Q_n((h_V)_* \Phi))$.
    \end{proof}

\end{document}